\DeclareMathOperator{\Tr}{\mathrm {Tr}}
\theoremstyle{plain}
\newtheorem{theorem}{Theorem}[section] 
\newtheorem{lemma}[theorem]{Lemma}
\newtheorem{proposition}[theorem]{Proposition}
\newtheorem{definition}[theorem]{Definition}
\newtheorem{corollary}[theorem]{Corollary}
\newtheorem{conjecture}[theorem]{Conjecture}
\theoremstyle{definition}
\newtheorem{remark}[theorem]{Remark} 
\newcommand{\eastdimer} {[very thick] ++(0,0.1)--++(0,0.8)}
\newcommand{\westdimer} {[very thick]++(0,0.1)--++(0,0.8)}
\newcommand{\southdimer} {[very thick]++(0.1,0)--++(0.8,0)}
\newcommand{\northdimer} {[very thick] ++(0.1,0)--++(0.8,0)}
\newcommand{\eastdomino} {++(-.5,-.5)--++(0,2)--++(1,0)--++(0,-2)--++(-1,0)}
\newcommand{\westdomino} {++(-.5,-.5)--++(0,2)--++(1,0)--++(0,-2)--++(-1,0)}
\newcommand{\southdomino}  {++(-.5,-.5)--++(0,1)--++(2,0)--++(0,-1)--++(-2,0)}
\newcommand{\northdomino}  {++(-.5,-.5)--++(0,1)--++(2,0)--++(0,-1)--++(-2,0)}
\newcommand{\eastdominoplus} {[very thick,red]++(-.5,0)--++(1,1)}
\newcommand{\westdominoplus} {[very thick,red]++(-.5,1)--++(1,-1)}
\newcommand{\southdominoplus}  {[very thick,red]++(-.5,0)--++(2,0)}
\newcommand{\eps}{\varepsilon}
\renewcommand{\Re}{\mathop{\mathrm{Re}}}
\renewcommand{\Im}{\mathop{\mathrm{Im}}}
\title{Biased $2 \times 2$ periodic Aztec diamond and an elliptic curve}
\author{Alexei Borodin\footnote{Department of Mathematics, Massachusetts Institute of Technology, 77 Massachusetts Ave., Cambridge, MA 02139, USA. E-mail: borodin@math.mit.edu}  \and Maurice Duits\footnote{Department of Mathematics, Royal Institute of Technology, Lindstedtsvägen 25, SE 10044, Stockholm Sweden. E-mail: duits@kth.se}}
\date{}
\begin{document}

\maketitle
\begin{abstract}
    We study random domino tilings of the Aztec diamond with a biased $2 \times 2$ periodic weight function and associate a  linear flow on an elliptic curve  to this model. Our main result is a double integral formula for the correlation kernel, in which the integrand is expressed in terms of this flow. For special choices of parameters the flow is periodic, and this allows us to perform a saddle point analysis for the correlation kernel.  In these cases we compute the local correlations in the smooth disordered (or gaseous) region. The special example in which the flow has period six is worked out in more detail, and we show that in that case the boundary of the rough disordered region is an algebraic curve of degree eight.
\end{abstract}

\setcounter{tocdepth}{1}
\tableofcontents

\section{Introduction}

Domino tilings of the Aztec diamond, originally introduced in \cite{EKL},  form a popular arena for various interesting phenomena of  integrable probability. A domino tiling of the Aztec diamond can be viewed as a perfect matching, also called dimer configuration,  on the Aztec diamond graph. This is a particular bipartite subgraph of the square lattice (cf. Figure \ref{fig:dimers}). By putting weights on the edges of the Aztec diamond graph, one defines a probability measure on the set of all perfect matchings, and hence all domino tilings, by saying that the probability of having a particular matching  is proportional to the product of the weights of the edges in that matching. In recent years, several works have appeared on domino tilings of the Aztec diamond where the weights are \emph{doubly periodic}. That is, they are periodic in two independent directions, and we will use the notation $k \times \ell$ to indicate that they are $k$-periodic in one direction and $\ell$-periodic in the other. In this paper, we will study a particular example of a $2 \times 2$ doubly periodic weighting that is a generalization  of the model  studied in \cite{BCJ1,BCJ2,CJ,CY,DK,JM}. The difference is that we introduce an extra parameter that induces a bias towards horizontal dominos, and we refer to this model as the \emph{biased $2 \times 2$ periodic Aztec diamond.}  The model considered in  \cite{BCJ1,BCJ2,CJ,CY,DK,JM} will be referred to as the unbiased $2 \times 2$ periodic Aztec diamond.

Doubly periodic weightings lead to  rich behavior when the size of the Aztec diamond becomes large. The Aztec diamond can be  partitioned into three regions:  frozen, rough disordered (or liquid) and  smooth disordered (or gaseous). They are characterized by the different local limiting Gibbs measures  that one expects in these regions \cite{KOS}. The difference between the smooth and disordered regions is that the dimer-dimer correlations decay exponentially with their distance in the smooth disordered region and polynomially in the rough region. The three regions are clearly visible in Figure \ref{fig:sample} where we have plotted a sample of our model for a large Aztec diamond.

 From general arguments, that go back to \cite{Kast}, we know that the correlation functions in our model are determinantal. In order to perform a rigorous asymptotic study, one aims to find an expression for the correlation kernel that is amenable for an asymptotic analysis. For the unbiased $2\times 2$ periodic Aztec diamond, a double integral representation  was  first found in \cite{CJ} (more precisely, they were able to find the inverse Kasteleyn matrix \cite{Kast}).   Based on this expression, the boundary between the smooth and rough disordered region has been studied extensively in  \cite{BCJ1,BCJ2,JM}. Unfortunately,  it is not obvious how the expression in \cite{CJ}  extends to the biased generalization  that we  consider in this paper.  Instead, we  follow the  approach of \cite{BD}. 

In \cite{BD} the authors studied  probability measures on particle configurations given by products of minors of block Toeplitz matrices. The biased $ 2\times 2$ periodic  Aztec diamond can be viewed as a special case of such a probability measure. The main result of \cite{BD} is an explicit  double integral formula for the correlation  kernel, provided one can find a Wiener-Hopf factorization for the product of the matrix-valued symbols for the block Toeplitz matrices.  That Wiener-Hopf factorization can in principle be found by carrying out an iterative procedure, in which the total number of iterations is of the same order as the size of the Aztec diamond.  In certain special cases, such as the unbiased $2 \times 2$ periodic  Aztec diamond \cite{BD} and a family of $2 \times k$ periodic weights  \cite{B}, the procedure is periodic, and after  a few iterations one ends up with the same parameters that one started with. This means that the Wiener-Hopf factorization has a rather simple form, and after inserting that expression in the double integral formula one obtains a suitable starting point for a saddle point analysis \cite{BD,DK}. However, generically,  the iteration in \cite{BD} is too complicated to find simple expressions for the Wiener-Hopf factorization, and  other ideas are needed.

The biased  $2\times 2$ periodic Aztec diamond is the simplest doubly periodic case in which it is difficult to trace the flow in \cite{BD}. Our first main result is that the Wiener-Hopf factorization can alternatively be computed by following a linear flow  on an explicit elliptic curve. This flow is rather simple and consists of repeatedly adding a particular point on the elliptic curve. For generic parameters, one expects the flow to be ergodic, but for special choices the flow will be periodic. We will identify a few explicit examples of these periodic cases, and perform an asymptotic study in the smooth disordered region for the general periodic situation. 

The reason why the iterative procedure in our case is linearizable on an elliptic curve can be traced back to \cite{MV}. 
In that work it was shown how an isospectral flow on certain quadratic matrix polynomials, obtained by repeatedly moving the right divisor of the polynomial with a given spectrum to the left side, is linearizable on the Jacobian (or the Prym variety) of the corresponding spectral curve. The main goal of \cite{MV} was to describe the dynamics of certain discrete analogs of classical integrable systems in terms of Abelian functions. Some of the key ideas used in that work had previously originated in constructing the so-called \emph{finite gap solutions} of integrable PDEs, see their book-length exposition \cite{BBEIM} with historic notes and references therein. The matrix case, which was most relevant for \cite{MV}, had been originally developed in \cite{D1,D2,I,K1,K2}. 

While our situation does not exactly fit into the formalism of \cite{MV}, similar ideas do apply, and they led us to the linearization. We hope that they will also help with studying more general tiling models.   

To conclude, let us mention that in \cite{DK} it was shown that the double periodicity leads to matrix-valued orthogonal polynomials. For the unbiased $2 \times 2$ periodic Aztec diamond, these matrix-valued orthogonal polynomials have a particularly simple structure. Somewhat surprisingly, they even have explicit integral expressions that lead to an explicit double integral representation for the correlation kernel. The expression in \cite{DK} was re-derived in \cite{BD}.  For the biased model it is interesting to see what the flow on the elliptic curve implies for the matrix-valued orthogonal polynomials, and if explicit expressions can be given in general and/or for the periodic case. Furthermore, it is interesting to compare our results with  \cite{Bert}, in which matrix orthogonal polynomials were studied using an abelianization based on the spectral curve for the orthogonality weight. 

\begin{figure}[t]
\begin{center}
    \includegraphics[scale=.4,angle=45]{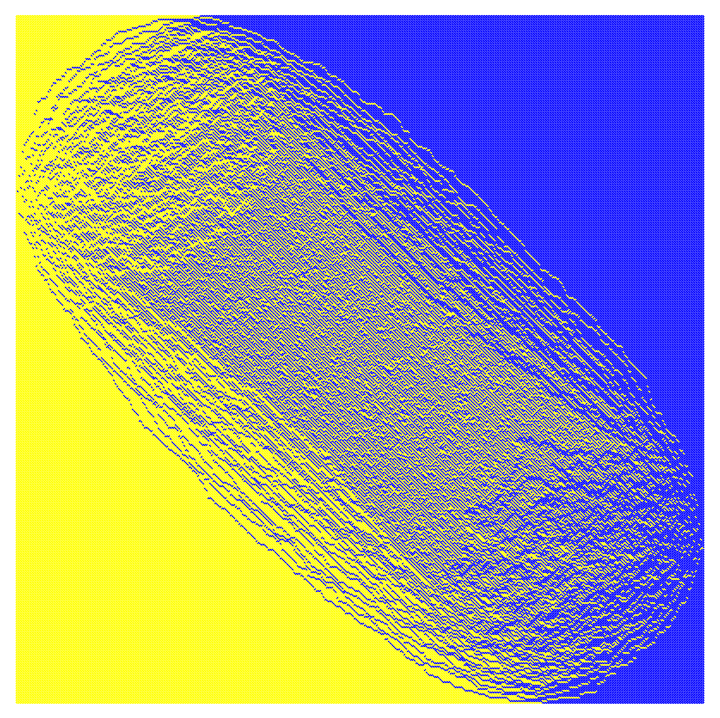}
\end{center}
\caption{A sampling of the biased doubly periodic for a large Aztec diamond. The West and South dominos are colored yellow, and the North and East dominos are colored blue. The three different regions are clearly visible, with the smooth disordered region in the middle, surrounded by the rough disordered region and  frozen regions in the corners. }
\label{fig:sample}
\end{figure}

\section{Preliminaries}

In this section we will introduce the dimer model that we are interested in, discuss several standard different representations from the literature and recall the determinantal structure of the correlation functions for a corresponding point processes. In our discussion we repeat necessary definitions from earlier works, in particular of \cite{BD,DK,Jdet,J}, and we will make specific references to those works at several places to refer the reader for more details. We refer to \cite{G} for a general introduction to random tilings. 

\subsection{A doubly periodic dimer model} \label{sec:dimer}

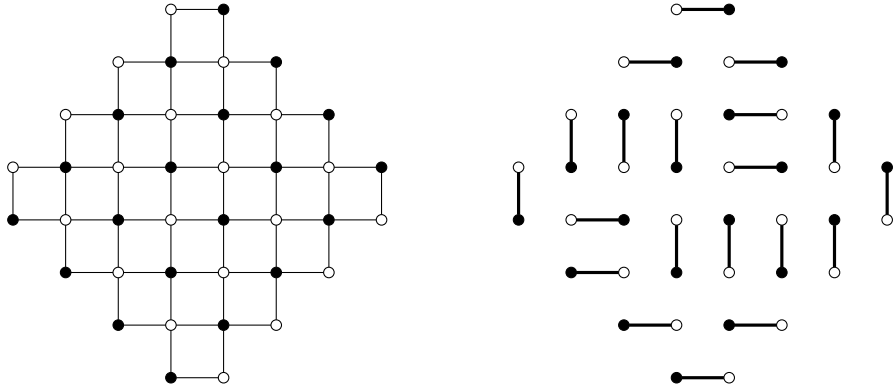
\begin{figure}[t]
    \begin{center}
\begin{tikzpicture}[scale=0.7]
    
    \foreach \l in {0,1,2,3} \foreach \k in {0,1,2,3,4} \filldraw (\k+\l,\k-\l) circle(.1);
    \foreach \l in {0,1,2,3,4} \foreach \k in {0,1,2,3} \draw (\k+\l,\k-\l+1) circle(.1);
    \foreach \l in {0,1,2,3} \foreach \k in {0,1,2,3} {
        \draw (\k+\l+.1,\k-\l+1)-\draw (\k+\l+.9,\k-\l+1); 
        \draw (\k+\l,\k-\l+.9)-\draw (\k+\l,\k-\l+.1);
        \draw (\k+\l+.1,\k-\l)-\draw (\k+\l+.9,\k-\l); 
        \draw (\k+\l+1,\k-\l+.9)-\draw (\k+\l+1,\k-\l+.1);}
\end{tikzpicture} \qquad \qquad 
\begin{tikzpicture}[scale=0.7]
    \foreach \l in {0,1,2,3} \foreach \k in {0,1,2,3,4} \filldraw (\k+\l,\k-\l) circle(.1);
    \foreach \l in {0,1,2,3,4} \foreach \k in {0,1,2,3} \draw (\k+\l,\k-\l+1) circle(.1);
   \draw (0,0) \eastdimer;
   \draw (1,1)  \eastdimer;
   \draw (3,1)  \eastdimer;
   \draw (3,-1) \eastdimer;
   \draw (5,-1)  \eastdimer;

   \draw (1,0)  \southdimer;
   \draw (1,-1)  \southdimer;
   \draw (2,-2)  \southdimer;
   \draw (3,-3) \southdimer;
   \draw (4,-2) \southdimer;
   \draw (4,2)  \southdimer;
   
   \draw (6,-1)  \westdimer;
   \draw (6,1) \westdimer;
   \draw (7,0)  \westdimer;
   \draw (2,1)  \westdimer;
   \draw (4,-1)  \westdimer;
   \draw (6,1)  \westdimer;

   \draw (3,4)  \northdimer;
   \draw (2,3)  \northdimer;
   \draw (4,3)  \northdimer;
   \draw (4,1) \northdimer;

\end{tikzpicture}
\end{center}
\caption{The left picture is the bipartite graph $\mathcal G_N$, with $N=4$, and the right picture is  a perfect mathching of $\mathcal G_N$. }\label{fig:dimers}
\end{figure}

For $N \in \mathbb N$ define a bipartite graph $\mathcal G_N=(\mathcal B_N \cup \mathcal W_N,\mathcal E_N)$,  with black vertices 
$$
    \mathcal B_N=\left\{\left(\tfrac12-N +j+k, -\tfrac 12-j +k\right)  \mid j=0,\ldots,N-1, \quad k=0,\ldots,N\right\},
$$ 
and white vertices 
$$
    \mathcal W_N=\left\{\left(\tfrac12-N +j+k, \tfrac 12-j +k\right)  \mid j=0,\ldots,N, \quad k=0,\ldots,N-1\right\},
$$
and with edges $\mathcal E_N$  between  black and white vertices that are neighbors in the lattice graph (i.e., that have a difference of $ (\pm 1,0)$ or $ (0,\pm 1)$). This gives the graph on the left of Figure \ref{fig:dimers}. The picture on the right of Figure \ref{fig:dimers} is a perfect matching of this bipartite graph, also   called a dimer configuration. A dimer model is a probability distribution on the space of all perfect matchings $\mathcal M$ of this graph $\mathcal G_N$ such that the probability of a particular matching $M$ is proportional to   
$$
 \mathbb P(M)\sim \prod_{e \in M} w(e),
$$
where $w:\mathcal E \to (0,\infty)$ is a weight function. 
\begin{figure}[t]
\begin{center}
    \begin{tikzpicture}[scale=1]
        \filldraw (0,0) circle(.1);
        \draw (1,0) circle(.1);
        \draw (-1,0) circle(.1);
        \draw (0,1) circle(.1);
        \draw (0,-1) circle(.1); 
        \draw (0,.1)--(0,.9);
        \draw (0,-.1)--(0,-.9);
        \draw (0.1,0)--(0.9,0);
        \draw (-.1,0)--(-.9,0);
        \node at (0.4,0.5)  {$ a \alpha$};
        \node  at (-.2,-0.5) {$a$};
        \node  at (-0.5,0.2) {$1 $};
        \node  at (0.5,-0.2){$ \alpha$};
        \node[text width=7cm]  at (0.3,-2.4){If the coordinates of  the center black vertex  are $(-\tfrac 12-N+m,\tfrac 12 +m-2j )$ with $m$ even};
    \end{tikzpicture}
    \begin{tikzpicture}[scale=1]
        \filldraw (0,0) circle(.1);
        \draw (1,0) circle(.1); 
        \draw (-1,0) circle(.1);
        \draw (0,1) circle(.1);
        \draw (0,-1) circle(.1);
        \draw (0,.1)--(0,.9);
        \draw (0,-.1)--(0,-.9);
        \draw (0.1,0)--(0.9,0);
        \draw (-.1,0)--(-.9,0);
        \node at (0.2,0.5)  {$\frac a \alpha$};
        \node  at (-.2,-0.5) {$a$};
        \node  at (-0.5,0.2) {$1 $};
        \node  at (0.5,-0.3){$\tfrac 1 \alpha$};
        \node[text width=7cm]  at (0.3,-2.4) {If the coordinates of  the center black vertex  are $(-\tfrac 12-N+m,\tfrac 12 +m-2j )$ with $m$ odd};
        \draw (2,0);
    \end{tikzpicture}
   
\end{center}
\caption{The weights on the edges of $\mathcal G_N$.} \label{fig:dimer_weights}
\end{figure}

In this paper, we will consider the weight functions defined as is shown in Figure \ref{fig:dimer_weights}. There are two parameters $\alpha,a \in (0,1]$. The vertical and horizontal edges with a black vertex on top or on the right all have weights $a$ and $1$, respectively. For vertical edges with a black vertex on the bottom and horizontal edges with a black vertex on the left, the weight depends on the coordinates of that black vertex. These weights are given by $a \alpha$ and $\alpha$, or by $a/\alpha$ and $1/\alpha$, depending on the coordinates of the black vertex in that edge.  If the vertical coordinate of that vertex is $\frac{1}{2}+k$  for an even $k$, then the weights are $a \alpha$ and $\alpha$. If the vertical coordinate of that vertex is  $\frac{1}{2}+k$ for an odd $k$,  then we have the same weight, but with $\alpha$ replaced by $1/\alpha$.  The distribution of the weights is thus two periodic in two different directions;  edges whose coordinates differ by a multiple of $(2,2)$ or $(2,-2)$ have the same weight. 

Note that only the parameter $\alpha$ is responsible for the double periodicity.  Indeed, for $\alpha=1$ the weights no longer depend on the position of the black vertex in the center in Figure \ref{fig:dimer_weights}.  We will be particularly interested in the doubly periodic situation and thus in the case $0<\alpha<1$.   The effect of the extra parameter $a$ is that all the vertical edges are given an extra factor $a$. If $0<a<1$, this makes them less likely, and the model is biased towards horizontal edges.    As we will see, adding this parameter has a profound effect on the integrable structure of this model. Moreover, we will see  that the special case $a=1$, studied by several authors \cite{BD,CY,DK}, is a very particular point.

An alternative way of representing matchings is by drawing dominos. Indeed, each matching is equivalent to a domino tiling by drawing rectangles around the matched vertices as is shown in Figure \ref{fig:from_dimer_to_domino}. The dominos tile a planar region known as the Aztec diamond. We distinguish between four different types of dominos called the West, East, North and South dominos. The West dominos are the vertical dominos with a black vertex on the bottom, the East dominos are the vertical dominos with a black vertex on the top, the North dominos are the horizontal dominos with a black vertex on the right  and, finally, the South dominos are the horizontal dominos with a black vertex on the left. In Figure \ref{fig:from_dimer_to_domino} these four types of dominos are the furthermost ones in the corresponding corners.

\begin{figure}
\begin{center}
    \begin{tikzpicture}[scale=0.7]
        \foreach \l in {0,1,2,3} \foreach \k in {0,1,2,3,4} \filldraw (\k+\l,\k-\l) circle(.1);
        \foreach \l in {0,1,2,3,4} \foreach \k in {0,1,2,3} \draw (\k+\l,\k-\l+1) circle(.1);
       \draw (0,0) \eastdimer;
       \draw (1,1)  \eastdimer;
       \draw (3,1)  \eastdimer;
       \draw (3,-1) \eastdimer;
       \draw (5,-1)  \eastdimer;

       \draw (1,0)  \southdimer;
       \draw (1,-1)  \southdimer;
       \draw (2,-2)  \southdimer;
       \draw (3,-3) \southdimer;
       \draw (4,-2) \southdimer;
       \draw (4,2)  \southdimer;
       
       \draw (6,-1)  \westdimer;
       \draw (6,1) \westdimer;
       \draw (7,0)  \westdimer;
       \draw (2,1)  \westdimer;
       \draw (4,-1)  \westdimer;
       \draw (6,1)  \westdimer;

       \draw (3,4)  \northdimer;
       \draw (2,3)  \northdimer;
       \draw (4,3)  \northdimer;
       \draw (4,1) \northdimer;

    \end{tikzpicture}
    \qquad \quad
    \begin{tikzpicture}[scale=0.7]
        
        \foreach \l in {0,1,2,3} \foreach \k in {0,1,2,3,4} \filldraw (\k+\l,\k-\l) circle(.1);
        \foreach \l in {0,1,2,3,4} \foreach \k in {0,1,2,3} \draw (\k+\l,\k-\l+1) circle(.1);
       \draw (0,0) \eastdomino;
       \draw (1,1)  \eastdomino;
       \draw (5,-1)  \eastdomino;
       \draw (3,1)  \eastdomino;
       \draw (3,-1) \eastdomino;
      
       \draw (1,-1)  \southdomino;
       \draw (2,-2)  \southdomino;
       \draw (3,-3) \southdomino;
       \draw (4,-2) \southdomino;
       \draw (4,2)  \southdomino;      
       
       \draw (6,-1)  \westdomino;
       \draw (7,0)  \westdomino;
       \draw (2,1)  \westdomino;
       \draw (4,-1)  \westdomino;
       \draw (6,1)  \westdomino;

       \draw (3,4)  \northdomino;
       \draw (2,3)  \northdomino;
       \draw (4,3)  \northdomino;
       \draw (4,1) \northdomino;
       \draw (1,0)  \southdomino;

    \end{tikzpicture}
\end{center}
\caption{The right picture is the domino representation of the dimer configuration on the left}
\label{fig:from_dimer_to_domino}
\end{figure}
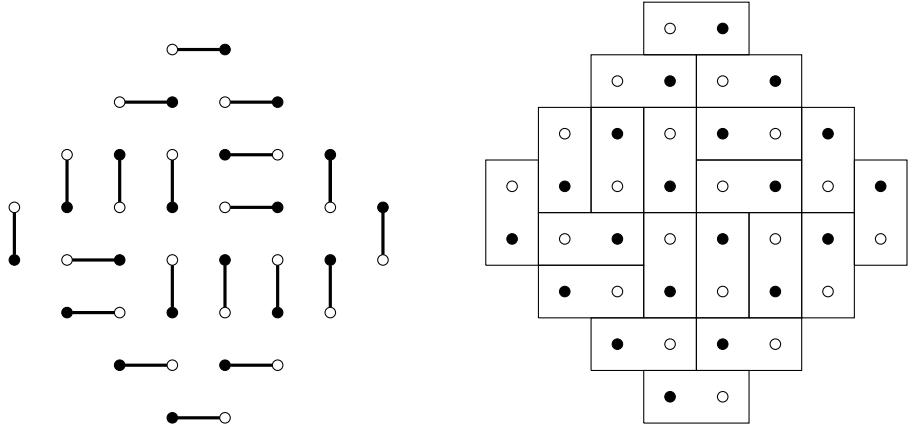
Note that the weighting that we will consider is such that all North dominos have weight $1$ and all East dominos have weight $a$. The weight of a West domino is either $a \alpha$ if the vertical coordinate of the lower left corner is even, or $a/\alpha$ if that coordinate is odd. Similarly, the weight of a South domino is either $\alpha$ if the vertical coordinate of the lower left corner is even, and $1/\alpha$ if that coordinate is odd. For small $a>0$ we expect to see more South and North dominos, as the West and East domino have small weight.

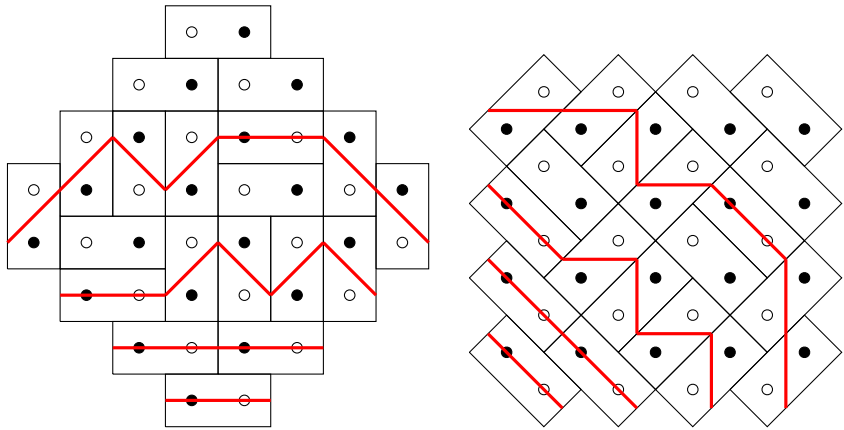
\begin{figure}[t]
    \begin{center}
        \begin{tikzpicture}[scale=0.7]
            
            \foreach \l in {0,1,2,3} \foreach \k in {0,1,2,3,4} \filldraw (\k+\l,\k-\l) circle(.1);
            \foreach \l in {0,1,2,3,4} \foreach \k in {0,1,2,3} \draw (\k+\l,\k-\l+1) circle(.1);
           \draw (0,0) \eastdomino;
           \draw (1,1)  \eastdomino;
           \draw (5,-1)  \eastdomino;
           \draw (3,1)  \eastdomino;
           \draw (3,-1) \eastdomino;
          
           \draw (1,-1)  \southdomino;
           \draw (2,-2)  \southdomino;
           \draw (3,-3) \southdomino;
           \draw (4,-2) \southdomino;
           \draw (4,2)  \southdomino;      
           
           \draw (6,-1)  \westdomino;
           \draw (7,0)  \westdomino;
           \draw (2,1)  \westdomino;
           \draw (4,-1)  \westdomino;
           \draw (6,1)  \westdomino;
    
           \draw (3,4)  \northdomino;
           \draw (2,3)  \northdomino;
           \draw (4,3)  \northdomino;
           \draw (4,1) \northdomino;
           \draw (1,0)  \southdomino;
    
           \draw[red,very thick] (0,0) \eastdominoplus;
           \draw[red,very thick] (1,1)  \eastdominoplus;
           \draw[red,very thick] (5,-1)  \eastdominoplus;
           \draw[red,very thick] (3,1)  \eastdominoplus;
           \draw[red,very thick] (3,-1) \eastdominoplus;
    
           \draw[red,very thick] (1,-1)  \southdominoplus;
           \draw[red,very thick] (2,-2)  \southdominoplus;
           \draw[red,very thick] (3,-3) \southdominoplus;
           \draw[red,very thick] (4,-2) \southdominoplus;
           \draw[red,very thick] (4,2)  \southdominoplus;
             
           \draw[red,very thick] (6,-1)  \westdominoplus;
           \draw[red,very thick] (7,0)  \westdominoplus;
           \draw[red,very thick]  (2,1)  \westdominoplus;
           \draw[red,very thick]  (4,-1)  \westdominoplus;
           \draw[red,very thick]  (6,1)  \westdominoplus;
        \end{tikzpicture}
        \quad \begin{tikzpicture}[rotate=-45,scale=0.7]
            
            \foreach \l in {0,1,2,3} \foreach \k in {0,1,2,3,4} \filldraw (\k+\l,\k-\l) circle(.1);
            \foreach \l in {0,1,2,3,4} \foreach \k in {0,1,2,3} \draw (\k+\l,\k-\l+1) circle(.1);
           \draw (0,0) \eastdomino;
           \draw (1,1)  \eastdomino;
           \draw (5,-1)  \eastdomino;
           \draw (3,1)  \eastdomino;
           \draw (3,-1) \eastdomino;
          
           \draw (1,-1)  \southdomino;
           \draw (2,-2)  \southdomino;
           \draw (3,-3) \southdomino;
           \draw (4,-2) \southdomino;
           \draw (4,2)  \southdomino;      
           
           \draw (6,-1)  \westdomino;
           \draw (7,0)  \westdomino;
           \draw (2,1)  \westdomino;
           \draw (4,-1)  \westdomino;
           \draw (6,1)  \westdomino;
    
           \draw (3,4)  \northdomino;
           \draw (2,3)  \northdomino;
           \draw (4,3)  \northdomino;
           \draw (4,1) \northdomino;
           \draw (1,0)  \southdomino;
    
           \draw[red,very thick] (0,0) \eastdominoplus;
           \draw[red,very thick] (1,1)  \eastdominoplus;
           \draw[red,very thick] (5,-1)  \eastdominoplus;
           \draw[red,very thick] (3,1)  \eastdominoplus;
           \draw[red,very thick] (3,-1) \eastdominoplus;
    
           \draw[red,very thick] (1,-1)  \southdominoplus;
           \draw[red,very thick] (2,-2)  \southdominoplus;
           \draw[red,very thick] (3,-3) \southdominoplus;
           \draw[red,very thick] (4,-2) \southdominoplus;
           \draw[red,very thick] (4,2)  \southdominoplus;
             
           \draw[red,very thick] (6,-1)  \westdominoplus;
           \draw[red,very thick] (7,0)  \westdominoplus;
           \draw[red,very thick]  (2,1)  \westdominoplus;
           \draw[red,very thick]  (4,-1)  \westdominoplus;
           \draw[red,very thick]  (6,1)  \westdominoplus;
        \end{tikzpicture}
    \end{center}
    \caption{The DR paths on a domino tiling.}\label{fig:DNR}
    \end{figure}

\subsection{Non-intersecting  paths}

A useful alternative representation, that is easily obtained from the dominos, is the representation by DR-paths \cite{J,Jnon,Stan}. By drawing an upright path across each West domino, a down-right path across each East domino, a horizontal across each South domino and nothing on a North domino, we obtain the picture given in Figure \ref{fig:DNR}. There are four paths leaving from the lower left side of the Aztec diamond and ending at the lower right side. The paths also cannot intersect. Clearly, the paths determine the location of the East, West and South dominos, and therewith the entire tiling. One can therefore represent each dimer configuration with a collection of non-intersecting paths. 

Instead of looking directly at the DR paths, however, we will consider closely related interpretation in terms of non-intersecting paths on a different graph. The reason for this is two-fold. First, the DR paths are rather uneven in length. The bottom path is much shorter than the top path. The second reason is that it turns out to be useful  to add paths so that we have an infinite number of them. The auxiliary paths will have no effect on the model, but will give a very convenient integrable structure. 

We start with a directed graph $\mathcal G_p=(\{0,1, \ldots, 2N\} \times \mathbb Z, \mathcal E_p)$ where we draw edges between the following vertices (we use the index $p$  in $\mathcal G_p$ and $\mathcal E_p$ to distinguish this graph from the bipartite graph in the dimer representation):
$$(2j,k)\to (2j+1,k), \qquad (2j,k)\to (2j+1,k+1),$$
$$(2j+1,k)\to (2j+2,k), \qquad (2j+2,k+1)\to (2j+2,k).$$
A part of the graph is shown in  Figure \ref{fig:paths}. 
\begin{figure}[t]
\begin{center}
    \begin{tikzpicture}[scale=0.6
        ,decoration={markings, mark= at position 0.5 with {\arrow{stealth}}}] 
        \foreach \k in {0,1,2,3} \foreach \ell in {0,1,2,...,8} {
            \draw [postaction={decorate}] (2*\k,\ell)--(2*\k+1,\ell+1);
            \draw [postaction={decorate}] (2*\k,\ell)--(2*\k+1,\ell);
            \filldraw (2*\k,\ell) circle(.05);
            \filldraw (2*\k+1,\ell) circle(.05);
            \filldraw (2*\k+1,\ell+1) circle(.05); }
            \foreach \k in {0,1,2} \foreach \ell in {0,1,2,...,8} {
                \draw [postaction={decorate}] (2*\k+2,\ell+1)--(2*\k+2,\ell);
                \draw [postaction={decorate}] (2*\k+1,\ell)--(2*\k+2,\ell);
                \filldraw (2*\k+2,\ell+1) circle(.05);
                \filldraw (2*\k+2,\ell) circle(.05);
                \filldraw (2*\k+1,\ell+1) circle(.05);
                };
                \foreach \k in {0,1,2,3} \foreach \ell in {0,1,2,...,8} {
                    \draw [postaction={decorate}] (2*\k+2,\ell+1)--(2*\k+2,\ell);
                    \draw [postaction={decorate}] (2*\k+1,\ell)--(2*\k+2,\ell);
                    \filldraw (2*\k+2,\ell+1) circle(.05);
                    \filldraw (2*\k+2,\ell) circle(.05);
                    \filldraw (2*\k+1,\ell+1) circle(.05);
                    }
                    \node  at (-1,6) {$(0,0)$};
                    \node  at (-1.5,1) {$(0,-M)$};
                    \node  at (9.5,6) {$(2N,0)$};
                    \node  at (10,1) {$(2N,-M)$};
            \foreach \j in {0,1,2,3,4,5} \filldraw (0,6-\j) circle(.15);
            \foreach \j in {0,1,2,3,4,5} \filldraw (8,6-\j) circle(.15);
    \end{tikzpicture}
 \qquad 
\begin{tikzpicture}[scale=0.6
    ,decoration={markings, mark= at position 0.5 with {\arrow{stealth}}}] 
    \foreach \k in {0,1,2,3} \foreach \ell in {0,1,2,...,8} {
        \draw [postaction={decorate}] (2*\k,\ell)--(2*\k+1,\ell+1);
        \draw [postaction={decorate}] (2*\k,\ell)--(2*\k+1,\ell);
        \filldraw (2*\k,\ell) circle(.05);
        \filldraw (2*\k+1,\ell) circle(.05);
        \filldraw (2*\k+1,\ell+1) circle(.05); }
        \foreach \k in {0,1,2} \foreach \ell in {0,1,2,...,8} {
            \draw [postaction={decorate}] (2*\k+2,\ell+1)--(2*\k+2,\ell);
            \draw [postaction={decorate}] (2*\k+1,\ell)--(2*\k+2,\ell);
            \filldraw (2*\k+2,\ell+1) circle(.05);
            \filldraw (2*\k+2,\ell) circle(.05);
            \filldraw (2*\k+1,\ell+1) circle(.05);
            };
            \foreach \k in {0,1,2,3} \foreach \ell in {0,1,2,...,8} {
                \draw [postaction={decorate}] (2*\k+2,\ell+1)--(2*\k+2,\ell);
                \draw [postaction={decorate}] (2*\k+1,\ell)--(2*\k+2,\ell);
                \filldraw (2*\k+2,\ell+1) circle(.05);
                \filldraw (2*\k+2,\ell) circle(.05);
                \filldraw (2*\k+1,\ell+1) circle(.05);
                }
              
        \foreach \j in {0,1,2,3,4,5} \filldraw (0,6-\j) circle(.15);
        \foreach \j in {0,1,2,3,4,5} \filldraw (8,6-\j) circle(.15);
        \draw[line width=.1cm](0,6)--(1,7)--(2,7)--(3,8)--(4,8)--(5,8)--(6,8)--(6,7)--(7,7)--(8,7)--(8,6);
        \filldraw (1,7) circle (.15);
        \filldraw (2,7) circle (.15);
        \filldraw (3,8) circle (.15);
        \filldraw (4,8) circle (.15);
        \filldraw (5,8) circle (.15);
        \filldraw (6,7) circle (.15);
        \filldraw (7,7) circle (.15);
        \draw[line width=.1cm](0,5)--(1,6)--(2,6)--(3,7)--(4,7)--(4,6)--(6,6)--(6,5)--(7,5)--(8,5)--(8,5);
        \filldraw (1,6) circle (.15);
        \filldraw (2,6) circle (.15);
        \filldraw (3,7) circle (.15);
        \filldraw (4,6) circle (.15);
        \filldraw (5,6) circle (.15);
        \filldraw (6,5) circle (.15);
        \filldraw (7,5) circle (.15);
        \draw[line width=.1cm](0,4)--(1,5)--(2,5)--(3,5)--(4,5)--(4,4)--(5,4)--(6,4)--(7,4)--(8,4)--(8,4);
        \filldraw (1,5) circle (.15);
        \filldraw (2,5) circle (.15);
        \filldraw (3,5) circle (.15);
        \filldraw (4,4) circle (.15);
        \filldraw (5,4) circle (.15);
        \filldraw (6,4) circle (.15);
        \filldraw (7,4) circle (.15);
        \draw[line width=.1cm] (0,3)--(1,4)--(2,4)--(2,3)--(8,3);
        \filldraw (1,4) circle (.15);
        \filldraw (2,3) circle (.15);
        \filldraw (3,3) circle (.15);
        \filldraw  (4,3) circle (.15);
        \filldraw (5,3) circle (.15);
        \filldraw (6,3) circle (.15);
        \filldraw (7,3) circle (.15);
        \draw[line width=.1cm] (0,2)--(8,2);
        \filldraw (1,2) circle (.15);
        \filldraw (2,2) circle (.15);
        \filldraw (3,2) circle (.15);
        \filldraw (4,2) circle (.15);
        \filldraw (5,2) circle (.15);
        \filldraw (6,2) circle (.15);
        \filldraw(7,2) circle (.15);
        \draw[line width=.1cm] (0,1)--(6,1)--(6,0)--(7,1)--(8,1);
        \filldraw (1,1) circle (.15);
        \filldraw (2,1) circle (.15);
        \filldraw (3,1) circle (.15);
        \filldraw (4,1) circle (.15);
        \filldraw (5,1) circle (.15);
        \filldraw (6,0) circle (.15);
        \filldraw(7,1) circle (.15);
        
\end{tikzpicture}
\caption{The left figure shows the underlying graph $\mathcal G_p$. The right figure shows the graph $\mathcal G_p$ and a collection of non-intersecting paths starting in $(0,-j)$ and ending in $(2N,-j)$ for $j=0,\ldots M$, with $N=4$ and $M=5$.}
\label{fig:paths}
\end{center}
\end{figure}
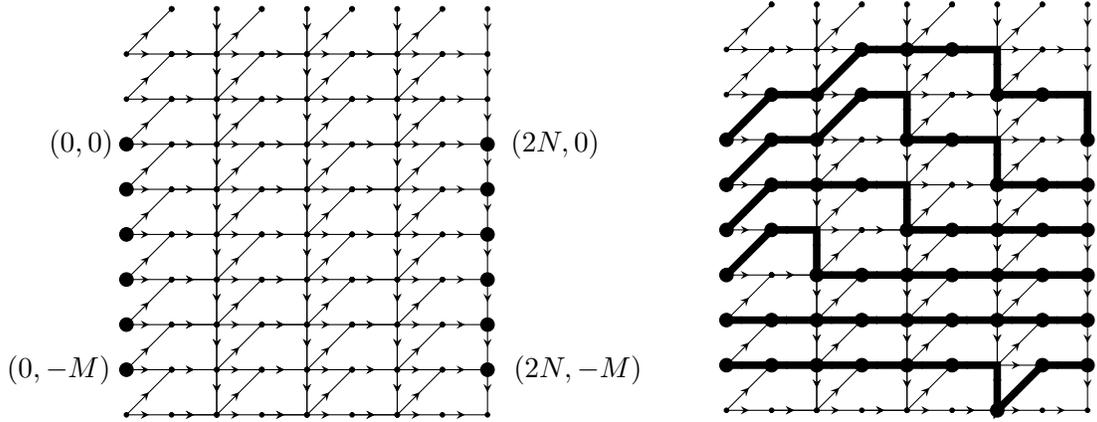
We then fix starting points $(0,-j)$ for $j=0,\ldots, M$, and endpoints $(2N,-j)$ for $j=0,\ldots, M$ and consider collections of paths in the directed graph that connect the starting points with the endpoints, such that no paths have a vertex in common  (i.e., they never intersect).  

Note that if $M\geq 2N-2$  only the $N$ top paths and the $N-1$ bottom paths are non-trivial, but any path in between is, due to the non-intersecting condition, necessarily a straight line.  In fact, even the top $N$ and bottom $N-1$ paths have parts where they are necessarily straight lines. Indeed, in the region between the lines $(m,-N+m/2)$ and $(m,-M+N+m/2)$ for $m=0,\ldots,2N$, all the paths are necessarily horizontal. 

The connection with the dimer models is the following: If we remove all paths below the line $(m,-N+m/2)$ then the configuration that remains is equivalent to the DR paths for the domino tilings of Aztec diamond. Indeed, by further removing all horizontal parts $(m,u)\to (m+1,u)$ for odd $m$ and concatenating the result, we obtain the picture in the  middle of Figure \ref{fig:LGV_to_DNR}. The coordinate transform $(m,u)\mapsto (m,u-m)$ maps the middle picture to  the DR-paths shown on the right of Figure \ref{fig:LGV_to_DNR}.

\begin{figure}[t]
    \begin{center}
        \begin{tikzpicture}[scale=0.6
            ,decoration={markings, mark= at position 0.5 with {\arrow{stealth}}}] 
            \foreach \k in {0,1,2,3} \foreach \ell in {0,1,2,...,8} {
                \draw [postaction={decorate}] (2*\k,\ell)--(2*\k+1,\ell+1);
                \draw [postaction={decorate}] (2*\k,\ell)--(2*\k+1,\ell);
                \filldraw (2*\k,\ell) circle(.05);
                \filldraw (2*\k+1,\ell) circle(.05);
                \filldraw (2*\k+1,\ell+1) circle(.05); }
                \foreach \k in {0,1,2} \foreach \ell in {0,1,2,...,8} {
                    \draw [postaction={decorate}] (2*\k+2,\ell+1)--(2*\k+2,\ell);
                    \draw [postaction={decorate}] (2*\k+1,\ell)--(2*\k+2,\ell);
                    \filldraw (2*\k+2,\ell+1) circle(.05);
                    \filldraw (2*\k+2,\ell) circle(.05);
                    \filldraw (2*\k+1,\ell+1) circle(.05);
                    };
                    \foreach \k in {0,1,2,3} \foreach \ell in {0,1,2,...,8} {
                        \draw [postaction={decorate}] (2*\k+2,\ell+1)--(2*\k+2,\ell);
                        \draw [postaction={decorate}] (2*\k+1,\ell)--(2*\k+2,\ell);
                        \filldraw (2*\k+2,\ell+1) circle(.05);
                        \filldraw (2*\k+2,\ell) circle(.05);
                        \filldraw (2*\k+1,\ell+1) circle(.05);
                        }
                       
                \foreach \j in {0,1,2,3,4,5} \filldraw (0,6-\j) circle(.15);
                \foreach \j in {0,1,2,3,4,5} \filldraw (8,6-\j) circle(.15);
                \draw[line width=.1cm](0,6)--(1,7)--(2,7)--(3,8)--(4,8)--(5,8)--(6,8)--(6,7)--(7,7)--(8,7)--(8,6);
                \filldraw (1,7) circle (.15);
                \filldraw (2,7) circle (.15);
                \filldraw (3,8) circle (.15);
                \filldraw (4,8) circle (.15);
                \filldraw (5,8) circle (.15);
                \filldraw (6,7) circle (.15);
                \filldraw (7,7) circle (.15);
                \draw[line width=.1cm](0,5)--(1,6)--(2,6)--(3,7)--(4,7)--(4,6)--(6,6)--(6,5)--(7,5)--(8,5)--(8,5);
                \filldraw (1,6) circle (.15);
                \filldraw (2,6) circle (.15);
                \filldraw (3,7) circle (.15);
                \filldraw (4,6) circle (.15);
                \filldraw (5,6) circle (.15);
                \filldraw (6,5) circle (.15);
                \filldraw (7,5) circle (.15);
                \draw[line width=.1cm](0,4)--(1,5)--(2,5)--(3,5)--(4,5)--(4,4)--(5,4)--(6,4)--(7,4)--(8,4)--(8,4);
                \filldraw (1,5) circle (.15);
                \filldraw (2,5) circle (.15);
                \filldraw (3,5) circle (.15);
                \filldraw (4,4) circle (.15);
                \filldraw (5,4) circle (.15);
                \filldraw (6,4) circle (.15);
                \filldraw (7,4) circle (.15);
                \draw[line width=.1cm] (0,3)--(1,4)--(2,4)--(2,3)--(8,3);
                \filldraw (1,4) circle (.15);
                \filldraw (2,3) circle (.15);
                \filldraw (3,3) circle (.15);
                \filldraw  (4,3) circle (.15);
                \filldraw (5,3) circle (.15);
                \filldraw (6,3) circle (.15);
                \filldraw (7,3) circle (.15);
                \draw[line width=.1cm] (0,2)--(8,2);
                \filldraw (1,2) circle (.15);
                \filldraw (2,2) circle (.15);
                \filldraw (3,2) circle (.15);
                \filldraw (4,2) circle (.15);
                \filldraw (5,2) circle (.15);
                \filldraw (6,2) circle (.15);
                \filldraw(7,2) circle (.15);
                \draw[line width=.1cm] (0,1)--(6,1)--(6,0)--(7,1)--(8,1);
                \filldraw (1,1) circle (.15);
                \filldraw (2,1) circle (.15);
                \filldraw (3,1) circle (.15);
                \filldraw (4,1) circle (.15);
                \filldraw (5,1) circle (.15);
                \filldraw (6,0) circle (.15);
                \filldraw(7,1) circle (.15);
                \filldraw[white,opacity=.7] (0,2.9)--(2,2.9)--(8,6)--(9,6)--(9,-0.2)--(-1,-0.2)--(-1,3)--(0,2.9);
                \foreach \j in {1,2,3,4} \filldraw (2*\j,\j+2) circle(.15);
                \filldraw (0,3) circle(.15);
        \end{tikzpicture}
        \qquad  \qquad
        \begin{tikzpicture}[scale=0.7
            ,decoration={markings, mark= at position 0.5 with {\arrow{stealth}}}] 

            \foreach \k in {0,1,2,3} \foreach \ell in {1,2,...,8} {
                \draw [postaction={decorate}] (\k,\ell)--(\k+1,\ell+1);
                \draw [postaction={decorate}] (\k,\ell)--(\k+1,\ell);
                \filldraw (\k,\ell) circle(.05);
                \filldraw (\k+1,\ell) circle(.05);
                \filldraw (\k+1,\ell+1) circle(.05); }
                \foreach \k in {0,1,2} \foreach \ell in {2,3,...,8} {
                    \draw [postaction={decorate}] (\k+2,\ell+1)--(\k+2,\ell);
                    
                    \filldraw (\k+2,\ell+1) circle(.05);
                    \filldraw (\k+2,\ell) circle(.05);
                    \filldraw (\k+1,\ell+1) circle(.05);
                    };
                    \foreach \k in {-1,0,1,2} \foreach \ell in {1,2,...,7} {
                        \draw [postaction={decorate}] (\k+2,\ell+1)--(\k+2,\ell);
                        \draw [postaction={decorate}] (\k+1,\ell)--(\k+2,\ell);
                        \filldraw (\k+2,\ell+1) circle(.05);
                        \filldraw (\k+2,\ell) circle(.05);
                        \filldraw (\k+1,\ell+1) circle(.05);
                        }
           \draw[line width=.1cm](0,6)--(1,7)--(2,8)--(3,8)--(3,7)--(4,7)--(4,6);
           \draw[line width=.1cm](0,5)--(1,6)--(2,7)--(2,6)--(3,6)--(3,5);         
                \draw[line width=.1cm](0,4)--(1,5)--(2,5)--(2,4);             
                \draw[line width=.1cm] (0,3)--(1,4)--(1,3);                          
        \end{tikzpicture}
        \qquad \qquad 
        \begin{tikzpicture}[scale=0.7
            ,decoration={markings, mark= at position 0.5 with {\arrow{stealth}}}] 

            \foreach \k in {0,1,2,3} \foreach \ell in {1,2,...,7} {
                \draw [postaction={decorate}] (\k,\ell)--(\k+1,\ell);
                \draw [postaction={decorate}] (\k,\ell)--(\k+1,\ell-1);
                \filldraw (\k,\ell-1) circle(.05);
                \filldraw (\k+1,\ell-1) circle(.05);
                \filldraw (\k+1,\ell) circle(.05); }
                \foreach \k in {-1,0,1,2} \foreach \ell in {2,3,...,8} {
                    \draw [postaction={decorate}] (\k+2,\ell-1)--(\k+2,\ell-2);
                    \filldraw (\k+2,\ell-1) circle(.05);
                    \filldraw (\k+2,\ell-2) circle(.05);
                    \filldraw (\k+1,\ell-1) circle(.05);
                   
                    };

                    \foreach \ell in {0,1,2,3} {
                \draw [postaction={decorate}] (\ell,0)--(\ell+1,0);}
                   
           \draw[line width=.1cm](0,6)--(1,7-1)--(2,8-2)--(3,8-3)--(3,7-3)--(4,7-4)--(4,6-4);
           \draw[line width=.1cm](0,5)--(1,6-1)--(2,7-2)--(2,6-2)--(3,6-3)--(3,5-3);         
                \draw[line width=.1cm](0,4)--(1,5-1)--(2,5-2)--(2,4-2);             
                \draw[line width=.1cm] (0,3)--(1,4-1)--(1,3-1);                          
        \end{tikzpicture}
        \caption{From the non-intersecting paths on the graph $\mathcal G_p$ to the DR paths. The middle picture is obtained by removing the horizontal steps  from the paths and the graph $\mathcal G_p$. In the  second transformation $(m, u) \mapsto (m, u-m)$ we obtain the rotated DR paths.} \label{fig:LGV_to_DNR}
    \end{center}
\end{figure}
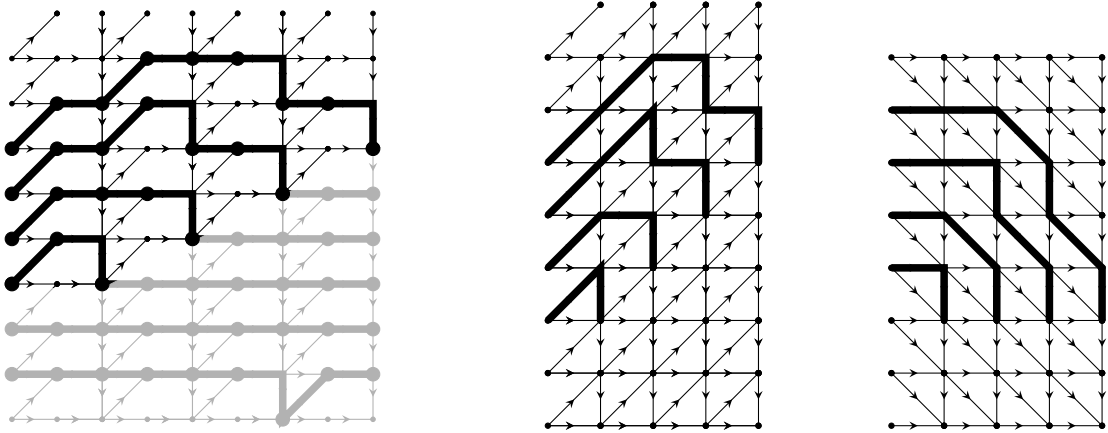

The next step is to put a probability measure on the collection of non-intersecting paths that is consistent with the dimer model from Section \ref{sec:dimer}. To make the correspondence, we note that each up-right diagonal edge in the graph $\mathcal G_p$ corresponds to a West domino, each vertical edge to an East domino, and each horizontal edge (after removing the auxiliary horizontal edges at the odd steps) corresponds to a South domino.  A careful comparison with the weights for the dimer models leads us to assigning weights to the underlying directed graph as follows: the horizontal edges $(m,u)\to (m+1,u)$ for odd $m$ are auxiliary and have weight 1, the vertical edges correspond to  East dominos and have weight $a$, the horizontal edges $(m,u)\to (m+1,u)$ for even $m$ correspond to  South dominos and have weight $\alpha$ if $u$ is even and weight $1/\alpha$ if $u$ is odd, and, finally, the up-right edges $(m,u) \to (m+1,u+1)$ for $m$ even have weight $a\alpha$ if $u$ is even and weight $a/\alpha$ if $u$ is odd. This is also represented in the following  finite weighted graph that is the building block for the rest of $\mathcal G_p$: 

\begin{center}
    \begin{tikzpicture}
        [decoration={markings, 
        mark= at position 0.5 with {\arrow{stealth}}}] 
        \draw [postaction={decorate}] (0,0)--(1,0);
        \draw [postaction={decorate}] (0,0)--(1,1);
        \draw [postaction={decorate}] (0,1)--(1,1);
        \draw [postaction={decorate}] (0,1)--(1,2);
        \draw [postaction={decorate}] (1,0)--(2,0);
        \draw [postaction={decorate}] (1,1)--(2,1);
        \draw [postaction={decorate}] (1,2)--(2,2);
        \draw [postaction={decorate}] (2,1)--(2,0);
        \draw [postaction={decorate}] (2,2)--(2,1);
        \filldraw (0,0) circle(.05);
        \filldraw (0,1) circle(.05);
        \filldraw (1,0) circle(.05);
        \filldraw (1,1) circle(.05);
        \filldraw (1,2) circle(.05);
        \filldraw (2,0) circle(.05);
        \filldraw (2,1) circle(.05);
        \filldraw (2,2) circle(.05);

        \node at (0.8,1.3) {$\alpha $};
        \node  at (0.2,1.7) {$a \alpha$};
        \node  at (0.5,-.3)  {$ \tfrac 1 \alpha$};
        \node  at (0,.4)  {$ \tfrac a \alpha$};
        \node  at (1.5,-.3){$1$};
        \node  at (1.5,.7){$1$};
        \node  at (1.5,1.7){$1$};
        \node  at (2.2,.5){$a$};
        \node  at (2.2,1.5){$a$};
        \node  at (-1,-.25){$(2j,2k-1)$};
        \node  at (-.85,1){$(2j,2k)$};
        \node  at (3.5,-.25){$(2j+2,2k-1)$};
        \node  at (3.2,1){$(2j+2,2k)$};
    \end{tikzpicture}
\end{center}

Then the probability of having  a particular configuration of non-intersecting paths is proportional to the product of the  weights of all the edges  in the corresponding dimer/domino configuration.

\subsection{A determinantal point process}
Let us now  assign a point process to the above collections of paths.  We place points on these paths by taking the lowest possible vertex on each vertical section (including those of length $0$), as indicated in the right panel of Figure \ref{fig:paths},
$$
    (m,u^{j}_m) \quad \text{ for } j=1,\ldots,M, \quad m=0,\ldots, 2N,
$$
where $u_0^{j}=u_{2N}^{j}=-j+1$ and $M \geq N$.  Since the top $N$ paths uniquely determine the dimer configuration, so do the points $(m,u_{m}^j)$. Further, our probability measure also turns the set of points with coordinates $(m,u_m^j )$ into a point process on $\{0,1,\ldots,2 N\}\times \mathbb Z$. 

We stress that we are  only interested in the points $(m,u^j_m)$ with $j \leq N-m/2+1$, as it is those  that determine the tiling. The other points are auxiliary and only added for convenience. Indeed, by a theorem of Lindström-Gessel-Viennot (see, e.g., \cite{Lind,GV}) the probability of a given point configuration is proportional to
$$
\prod_{m=1}^{2N} \det T_m(u_{m-1}^j,u^k_{m})_{j,k=1}^M,
$$
where $T_m$ are the transition matrices defined by
$$
    \left[T_{m}(2 k_1-\ell_1,2 k_2-\ell_2)\right]_{\ell_1,\ell_2=0}^1= 
    \frac{1}{2\pi i}\oint A_{m}(z) \frac{dz}{z^{k_2-k_1+1}},
$$
for $k_1,k_2 \in \mathbb Z$, 
and  $A_m(z)$ given by 
$$
 A_m(z)=
    \begin{dcases}
        A_e(z),& \text{if } m \text{ is even},\\
        A_o(z),& \text{if } m \text{ is odd},
    \end{dcases}
$$
with
\begin{equation}\label{eq:symbolsoddeven}
A_o(z)=\begin{pmatrix}
    \alpha & a \alpha z\\
    \frac{a}{\alpha}  & \frac{1}{\alpha}
\end{pmatrix}, \qquad 
A_e(z)=\frac{1}{1-a^2/z}
        \begin{pmatrix}
            1 &  a\\
            \frac{a}{z} & 1
       \end{pmatrix}.
    \end{equation}
We will also use the notation
$$
    A(z)=\prod_{m=1}^{2N}A_m(z).
$$
By the Eynard-Mehta theorem (see, e.g., \cite{EM}), the point process is determinantal, meaning that there exists a kernel 
\begin{equation}
    K_{N,M}: \left(\{0,1,\ldots,2 N\}\times \mathbb Z\right) \times\left( \{0,1,\ldots,2 N\}\times\mathbb Z\right)  \to \mathbb C,
\end{equation} 
such that, for any $(m_k,u_k ) \in \{0,\ldots, 2N\} \times \mathbb Z$ and $k=1,\ldots,n$,
$$  
   \mathbb P(\textrm{there are points at } (m_k,u_k ), \quad k=1,\ldots,n) = \det \left[K_{N,M}((m_j,u_j),(m_k,u_k))\right]_{j,k=1}^n.
$$
Now we recall that we are only interested in the top $N$ paths, and thus we will restrict $u_j$ to be in $\{-N+1,\ldots,0\}$. Then  the marginal densities are independent of $M$ as long as $M$ is sufficiently large and 
$$
 K_{N,M}((m_1,u_1),(m_2,u_2)) = \lim_{M\to \infty}  K_{N,M}((m_1,u_1),(m_2,u_2))= K_{N}((m_1,u_1),(m_2,u_2)).
$$
In \cite{BD} a double integral formula for the correlation kernel $K_N$ was given. That formula involves a solution to a Wiener-Hopf factorization. 
\begin{proposition}  \label{prop:BD}  \cite[Theorem 3.1]{BD} Suppose that we can find a factorization
$$
    A(z)=A_-(z)A_+(z) 
$$
with $2\times 2$ matrices $A_\pm(z)$ such that 
\begin{enumerate}
    \item $A_+^{\pm 1} (z)$ are analytic in $|z|<1$ and continuous in $|z|\leq 1$,
    \item $A^{\pm 1 } _-(z)$ are analytic in $|z|>1$ and continuous in $|z|\geq 1$,
    \item $A_-(z)\sim \begin{pmatrix} 1 & 0 \\ 0 & 1\end{pmatrix} $ as $ z\to \infty$.
\end{enumerate}
Then the kernel $K_{N,M}$ has the pointwise limit $K_N$ as $M\to \infty$ given by 
\begin{multline} \label{eq:correlationkernelBD_Intro}
    \left[K_N((m,2 x-j),(m',2x'-j'))\right]_{j,j=0}^1= -\frac{\mathbbm{1}_{m'<m}}{2 \pi i} \int_{|z|=1} \prod_{j=m'+1}^{m} A_j(z) \frac{dz}{z^{x-x'+1}}\\
    +\frac{1}{(2 \pi i)^2}
    \oint_{|w|=\rho_1} \oint_{|z|=\rho_2}\left(\prod_{j=m'+1}^{2 N} A_j(w)\right) A_+(w)^{-1} A_-(z)^{-1}  \left(\prod_{j=1}^{m} A_j(z)\right) \frac{w^{x'}}{z^{x+1}}\frac{dz dw}{z-w},
\end{multline}
where $|a|^2<\rho_1<\rho_2<1/|a|^2$, $\mathbbm{1}_{m'<m}=1$ if $m'<m$ and $0$ otherwise,  and the integration contours are positively oriented. 
\end{proposition}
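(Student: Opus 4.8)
The plan is to obtain the formula by feeding the Eynard--Mehta theorem (which already gives the determinantal structure in the excerpt) into the classical inversion of a block Toeplitz operator via a canonical Wiener--Hopf factorization. Concretely, I would first write down the Eynard--Mehta kernel in the form adapted to non-intersecting paths with inhomogeneous transition matrices $T_1,\dots,T_{2N}$, common initial positions $\{-j+1\}_{j=1}^M$ and common final positions $\{-j+1\}_{j=1}^M$. Before any limit this has the shape
$$
K_{N,M}\bigl((m,u),(m',u')\bigr)=-\mathbbm 1_{m'<m}\,(T_{m'+1}\cdots T_m)(u',u)+\sum_{i,k}(T_{m'+1}\cdots T_{2N})(u',y_i)\,\bigl(G_{N,M}^{-1}\bigr)_{ik}\,(T_1\cdots T_m)(x_k,u),
$$
with $G_{N,M}=\bigl[(T_1\cdots T_{2N})(x_k,y_i)\bigr]$ the Gram matrix of the boundary data. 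The first term already matches the first term of \eqref{eq:correlationkernelBD_Intro}: writing $u=2x+j$, $u'=2x'+j'$ and using the definition of $T_m$ through $A_m(z)$, the convolution $T_{m'+1}\cdots T_m$ collapses to a single contour integral of $\prod_{j=m'+1}^{m}A_j(z)$ over $|z|=1$ with the stated power of $z$ (one only has to keep straight the row/column, i.e.\ transpose, convention forced by the particle labelling).

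The core of the argument is identifying and inverting $G_{N,M}$ as $M\to\infty$. Because the initial and final position sets fill the semi-infinite ray $\{-j+1:j\ge 1\}$ in the limit, and because $T_1\cdots T_{2N}$ is, in the $2$-periodic ``block'' encoding, nothing but multiplication by the matrix symbol $A(z)=\prod_{m=1}^{2N}A_m(z)$, the limiting operator $G_N=\lim_M G_{N,M}$ is exactly the block Toeplitz operator $\mathcal T(A)$ on $\ell^2(\mathbb Z_{\ge 0})\otimes\mathbb C^2$. Hypotheses (1)--(3) on $A_\pm$ are precisely the statement that $A=A_-A_+$ is a \emph{canonical} right Wiener--Hopf factorization (no partial indices; $A_-$ normalized to $I$ at $\infty$), and the classical Gohberg-type theorem on Toeplitz operators then gives invertibility with
$$
\mathcal T(A)^{-1}=\mathcal T(A_+^{-1})\,\mathcal T(A_-^{-1}).
$$

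Substituting this for $G_N^{-1}$ and representing each of the four operator factors as a contour integral, the summation over the index shared by $\mathcal T(A_+^{-1})$ and $\mathcal T(A_-^{-1})$ becomes a geometric series in $w/z$, convergent for $|w|<|z|$, that sums to a constant times $1/(z-w)$; reassembling the pieces produces the double integral of \eqref{eq:correlationkernelBD_Intro} with the factors in the order $\prod_{j=m'+1}^{2N}A_j(w)$, $A_+(w)^{-1}$, $A_-(z)^{-1}$, $\prod_{j=1}^{m}A_j(z)$ (using $A_-A_+=A=\prod_m A_m$ to collect the flanking products). The admissible radii then come from where all integrands are holomorphic: $A_e(z)$ has a pole at $z=a^2$ and $A_o(z)^{-1}$ a pole at $z=1/a^2$, so $A(z)^{\pm1}$, and hence the rational factors $A_\pm^{\pm1}$, are holomorphic in the annulus $|a|^2<|z|<1/|a|^2$, which contains the unit circle; together with $|w|<|z|$ this yields $|a|^2<\rho_1<\rho_2<1/|a|^2$.

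The step I expect to be the main obstacle is making the $M\to\infty$ limit rigorous: one must show that the finite-window entries of $G_{N,M}^{-1}$ converge to those of $\mathcal T(A_+^{-1})\mathcal T(A_-^{-1})$ with enough boundedness/trace-class control that the Eynard--Mehta expression for $K_{N,M}$ passes to the limit and the nested sums and integrals may be interchanged. This is exactly where the continuity of $A_\pm^{\pm1}$ up to $|z|=1$ and the absence of partial indices are used, and one has to accommodate the fact that $A(z)$ is not unitary on the unit circle (the model is biased): one works on $\ell^2$ with a geometric weight, equivalently deforms the contour off $|z|=1$ inside the annulus $|a|^2<|z|<1/|a|^2$, which is precisely why the statement allows $\rho_1,\rho_2$ on either side of $1$. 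The residual bookkeeping (transposes, orientations, the precise placement of $A_+^{-1}$ versus $A_-^{-1}$) is routine once this operator-theoretic statement is in place, and the whole scheme is the one carried out in \cite{BD}.
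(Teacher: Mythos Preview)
The paper does not give its own proof of this proposition: it is quoted directly from \cite[Theorem~3.1]{BD}, with only a short remark afterward explaining that the kernel here corresponds to what \cite{BD} calls $K_{\mathrm{top}}$, that coordinates have been shifted, and that the second factorization required in the original formulation is supplied by \cite[Theorem~4.8]{BD}. There is thus no in-paper proof to compare against.

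Your outline is a faithful summary of the argument in \cite{BD}: Eynard--Mehta gives the finite-$M$ kernel with a Gram matrix built from the full product $T_1\cdots T_{2N}$; in the $M\to\infty$ limit this Gram matrix becomes the block Toeplitz operator with symbol $A(z)$; a canonical Wiener--Hopf factorization $A=A_-A_+$ yields $\mathcal T(A)^{-1}=\mathcal T(A_+^{-1})\mathcal T(A_-^{-1})$; summing the geometric series produces the Cauchy kernel $1/(z-w)$; and the analyticity constraints on $A_\pm^{\pm1}$ together with the poles of $A_o,A_e$ fix the admissible radii. You are also right that the delicate step is justifying the $M\to\infty$ limit at the level of matrix entries, and that this is handled in \cite{BD} rather than here. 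In short, your plan is correct and coincides with the cited proof; there is nothing further to add from the present paper.
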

\begin{remark}
    Proposition \ref{prop:BD} is only part of Theorem 3.1 in \cite{BD}. Indeed, the kernel in \eqref{eq:correlationkernelBD_Intro} is called $K_{top}$ in \cite{BD}. We note that here we already shifted coordinates compared to \cite{BD}. Also, in the formulation of Theorem 3.1 in \cite{BD} one needs a second factorization $A(z)= \tilde A_-(z) \tilde A_+(z)$. However, all that is needed for Proposition \ref{prop:BD} is the existence of such a factorization, and that is guaranteed by Theorem 4.8  in \cite{BD}. 
\end{remark}
\subsection{The Wiener-Hopf factorization} \label{sec:intro_wh}
The  question remains how to find a Wiener-Hopf factorization that is explicit enough to be able to use \eqref{eq:correlationkernelBD_Intro} as a starting point for asymptotic analysis.  The idea for finding a Wiener-Hopf factorization is simple (see also \cite[Section 4.4]{BD}). 
Write 
$$
    A(z)= \frac{1}{(1-a^2/z)^N}(P(z))^{N},
$$
where 
$$
  P(z)= \begin{pmatrix}
    \alpha & a \alpha z\\
    \frac{a}{\alpha}   & \frac{1}{\alpha}
\end{pmatrix} 
\begin{pmatrix}
    1 &  a\\
    \frac{a}{z} & 1
\end{pmatrix}.
$$
Then in the first step we look for a Wiener-Hopf factorization of the form
$$
        P(z)=P_{0,-}(z)P_{0,+}(z),
$$
and then write 
$$
    (P(z))^N=P_{0,-}(z) (P_1(z))^{N-1} P_{0,+}(z),
$$
where 
$$
    P_1(z)=P_{0,+}(z)P_{0,-}(z).
$$
Next, we compute a factorization for $P_1(z)=P_{1,-}(z)P_{1,+}(z)$ and set $P_2(z)=P_{1,+}(z)P_{1,-}(z)$. At each step in the procedure we thus construct a new matrix valued function $P_{k+1}(z)=P_{k,+}(z)P_{k,-}(z)$ constructed by  switching the order of the  Wiener-Hopf factorization 
\begin{equation}
    \label{eq:WH_intro}    
    P_{k}(z)=P_{k,-}(z)P_{k,+}(z).
\end{equation}
The result is that we find a Wiener-Hopf factorization for $A(z)$ of the form 
$$
  A(z)=
  \frac{1}{(1-a^2/z)^N}\left(P_{0,-}(z) \cdots P_{N-1,-}(z)\right)
    \left(P_{N-1,+}(z) \cdots P_{0,+}(z)\right).
$$
An important point is that this procedure defines a flow 
$$ P_0(z) \mapsto P_1(z) \mapsto P_2(z) \mapsto \ldots$$
and to obtain explicit representations for the correlation kernel in \eqref{eq:correlationkernelBD_Intro} we need to have a sufficiently detailed description of this flow.  As was pointed out in \cite[Section 4]{BD}, there is a general procedure to capture this flow. Generically, the description in \cite{BD} of the flow is rather difficult to control, but for specific values it can be written explicitly.  Indeed, for $a=1$, the double integral formula of \cite{DK} could be reproduced. See also \cite{B} for other cases where it was tractable. It is important to note that in the cases of both \cite{DK} and \cite{B}, the flow was periodic, which is  of great help, in particular for asymptotic analysis. For the model that we consider in this paper, however, it appears difficult to control this flow for $a<1$, and the point of the present paper is to give an alternative more tangible description. We will show that the flow is equivalent to translations on an explicit elliptic curve. This will also help us to track other choices of parameters for which the flow is periodic. 

\section{Main results}
We now present our main results. All proofs will be postponed to Sections 5. 

\subsection{An elliptic curve} \label{sec:examplestorsion}
Consider an elliptic curve $\mathcal E$ (over $\mathbb R)$ defined by the equation 
\begin{equation} \label{eq:elliptic_aztec}
y^2=x^2+\frac{4 x(x-a^2)(x-1/a^2)}{(a+1/a)^2(\alpha+1/\alpha)^2},
\end{equation}
where $\alpha$ and $a$ are the parameters from the dimer model in Section \ref{sec:dimer}. One easily verifies that the curve crosses the $x$-axis  precisely three times, once at the origin and at two further  intersection points in  $(-\infty,0)$. The elliptic curve has therefore two connected components, and one of those, denoted by $\mathcal E_-$, lies entirely in the left half plane. Note also that $(0,0)$, $(a^2,a^2)$ and $(a^{-2},a^{-2})$ are the intersection points of the curve with the line $y=x$. The point $(a^{-2},a^{-2})$ will be of particular interest to us. 

\begin{figure}[t] 
    \begin{center}
    \begin{overpic}[scale=.6]{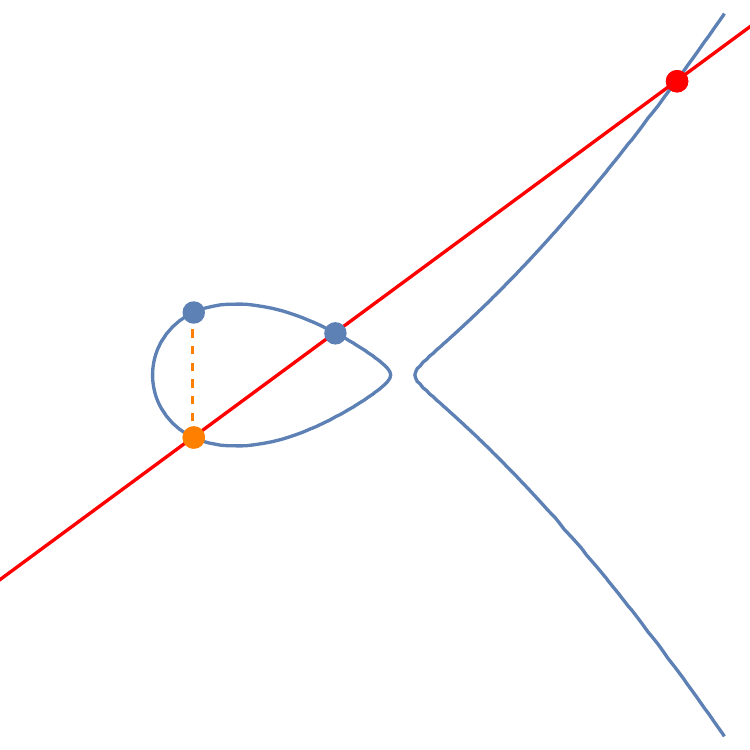}
        \put (12,65)  {\small{$(x_{j+1},y_{j+1})$}}
        \put (90,80)  {\small{$(a^{-2},a^{-2})$}}
        \put (37,62)  {\small{$(x_j,y_j)$}}
        \put (42,37)  {\small{$\mathcal E_-$}}
    \end{overpic}
\end{center}
\caption{The flow on the elliptic curve. At each step we add the point $(a^{-2},a^{-2})$. This can be geometrically represented by drawing a straight line through  $(a^{-2},a^{-2})$ and $(x_j,y_j)$. This line intersects the curve at a unique third point in $\mathcal E_-$. The point $(x_{j+1},y_{j+1})$ is then obtained from the intersection point by flipping the sign of the second coordinate.}
\label{fig:addition_on_curve}
\end{figure} 

It is well known that an elliptic curve carries an Abelian group structure, and we can add points on the curve. The point at infinity serves as the identity.  We will be interested in a linear flow on the curve that is constructed by repeatedly adding the point $(a^{-2},a^{-2})$  starting from the initial parameters $(x_0,y_0)=(-1,-\frac{1-\alpha^2}{1+\alpha^2})$. That is, we consider the flow
$$
    \begin{dcases}
        (x_{j+1},y_{j+1})= \sigma(x_j,y_j),\\
        (x_0,y_0)=\left(-1,-\frac{1-\alpha^2}{1+\alpha^2}\right),
    \end{dcases}
$$
where 
$$
    \sigma(x,y)=(x,y)+(a^{-2},a^{-2}),
$$
and $+$ represents addition on the elliptic curve.  The flow can be nicely illustrated  by the geometric description of the group addition on the curve. Starting from $(x_j,y_j)$ we compute $(x_{j+1},y_{j+1})$ as follows: the straight line passing though $(x_j,y_j)$ and $(a^{-2},a^{-2})$ passes through a third point and $(x_{j+1},y_{j+1})$ is the reflection of that point with respect to the $x$ axis (in other words, we flip the sign of the $y$-coordinate). See also Figure \ref{fig:addition_on_curve}. It can happen that the line through $(x_j,y_j)$ and $(a^{-2},a^{-2})$  is tangent to $\mathcal E_-$ at point $(x_j,y_j)$. In that case, $(x_{j+1},y_{j+1})$ is just the reflection of the $(x_j,y_j)$  with respect to the $x$ axis.  Note that the initial point $(x_0,y_0)$ lies on the oval $\mathcal E_-$, and from the geometric interpretation it is easy to see that every point $(x_{j},y_j)$  is on the oval $\mathcal E_-$. 

Our first main result  is that this flow uniquely determines the correlations for the biased Aztec diamond as described in Sections 2.1--2.4 above. But before we explain that, we first discuss properties of the flow that will be of interest to us. For generic choices of the parameters one can expect the flow to be ergodic on $\mathcal E_-$,  but for certain special parameters  $(a^{-2},a^{-2})$ will be a torsion point. In those cases the flow is periodic. This distinction  has important implications  for our asymptotic analysis of the tiling model. We will therefore discuss a few examples in which  $(a^{-2},a^{-2})$ is a torsion point.

First, if we assume that $\alpha=1,$ then our dimer model is an example of a Schur process \cite{OR}, and we know that simpler double integral formulas for its correlation kernel can be given. This should mean that our flow has a particularly simple structure. Indeed, for $\alpha=1$, the oval $\mathcal E_-$ reduces to a singleton $\mathcal E_-=\{(-1,0)\}$, and the flow is constant. This can also be seen directly, from the fact that the two factors in the definition of $P(z)$ commute. 

The second case of interest is the unbiased case where $a=1.$ In that case, $(a^{-2},a^{-2})=(a^2,a^2)$, and the elliptic curve  is tangent to the line $y=x$ at that point. For general $a>0$ we have the relation $(a^2,a^2)+(a^{-2},a^{-2})=(0,0)$ and thus, for $a=1$, we have $2(a^{-2},a^{-2})=(0,0)$. It is also clear that $(0,0)$ is a point of order $2$, and thus $(a^{-2},a^{-2})$ is of order 4. This implies that our flow is periodic and returns to its initial point after $4$ steps. For completeness, we compute the flow explicitly:
\begin{equation}
    \left(-1,-\frac{1-\alpha^2}{1+\alpha^2}\right)\mapsto \left(-\alpha^2,0\right)\mapsto  \left(-1,\frac{1-\alpha^2}{1+\alpha^2}\right) \mapsto \left(-\frac{1}{\alpha^2},
    0\right) \mapsto \left(-1,-\frac{1-\alpha^2}{1+\alpha^2}\right).
\end{equation}
See the left panel of Figure \ref{fig:curveOrder6} for an illustration.

The next example we would like to discuss is that of an order six torsion point. This happens when 
\begin{equation} \label{eq:condsix}
  a^2=\frac{\alpha}{\alpha^2+\alpha+1}.
\end{equation}
The flow on the elliptic curve is given by:
\begin{multline} \label{eq:flowsix}
    \left(
        -1, - \frac{1-\alpha^2}{1+\alpha^2}
    \right)
    \mapsto
    \left(
        -\alpha^2,  \frac{-\alpha^2+\alpha^3}{1+\alpha}
    \right)
    \mapsto
    \left(
        -\alpha^2,  \frac{\alpha^2-\alpha^3}{1+\alpha}
    \right)
    \mapsto
    \left(
        -1,  \frac{1-\alpha^2}{1+\alpha^2}
    \right)\\
    \mapsto
    \left(
        -\frac{1}{\alpha^2},  \frac{1-\alpha}{\alpha^2+\alpha^3}
    \right)
    \mapsto
    \left(
        -\frac{1}{\alpha^2},  -\frac{1-\alpha}{\alpha^2+\alpha^3}
    \right)
    \mapsto
    \left(
        -1, - \frac{1-\alpha^2}{1+\alpha^2}
    \right).
\end{multline}
Indeed, after six steps we have returned to our initial point. This case is illustrated on the right panel of Figure \ref{fig:curveOrder6}.

We found the relation \eqref{eq:condsix} by computing the division polynomial of order $6$ and requiring that $(a^{-2},a^{-2})$ is a zero of this polynomial. In fact, this provides a recipe for deriving relations between $a$ and $\alpha$ such that   $(a^{-2},a^{-2})$ is a torsion point of order $m$. We recall the notion of division polynomials in Appendix \ref{sec:divisionpolynomials} and provide such relations for $m=4,5,6,7,8$. 

\begin{figure}[t]
    \begin{center}
        \begin{overpic}[scale=0.35]{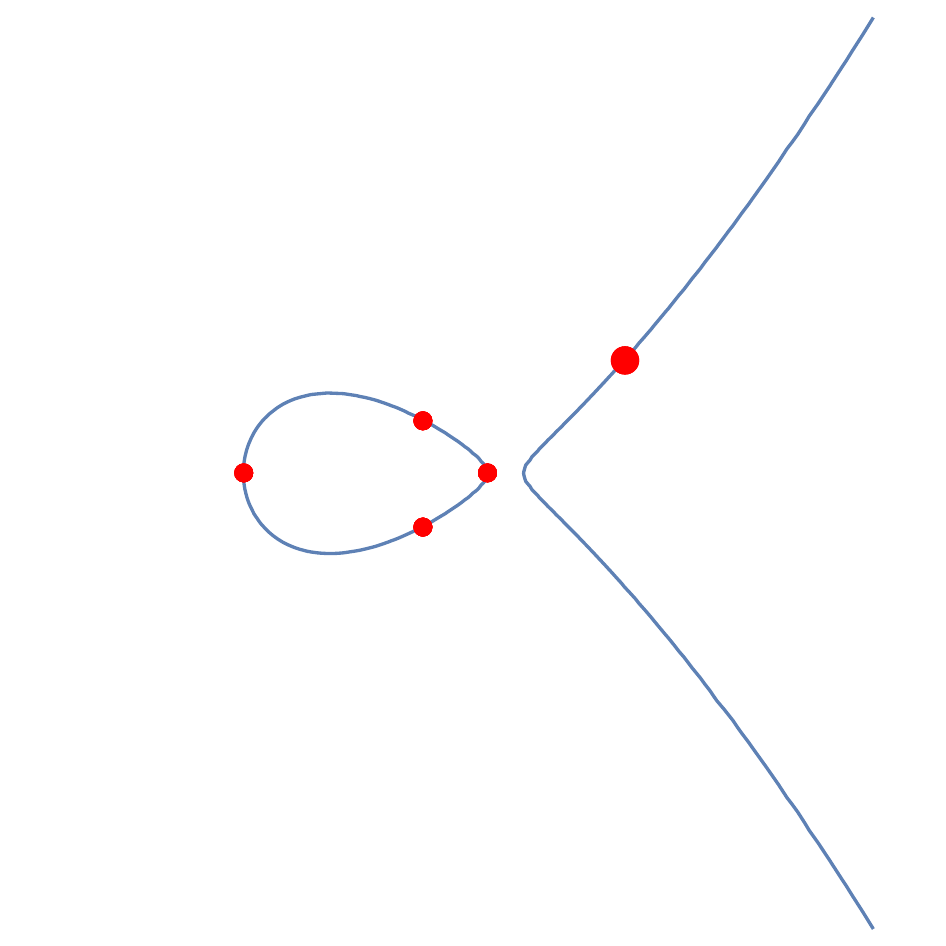}
            \put (45,40) {\tiny{1}}
            \put (52,52) {\tiny{2}}
            \put (45,58) {\tiny{3}}
            \put (20,48) {\tiny{4}}
            \put (65,55) {\tiny{$(a^{-2},a^{-2})$}}
        \end{overpic}
        \begin{overpic}[scale=0.35]{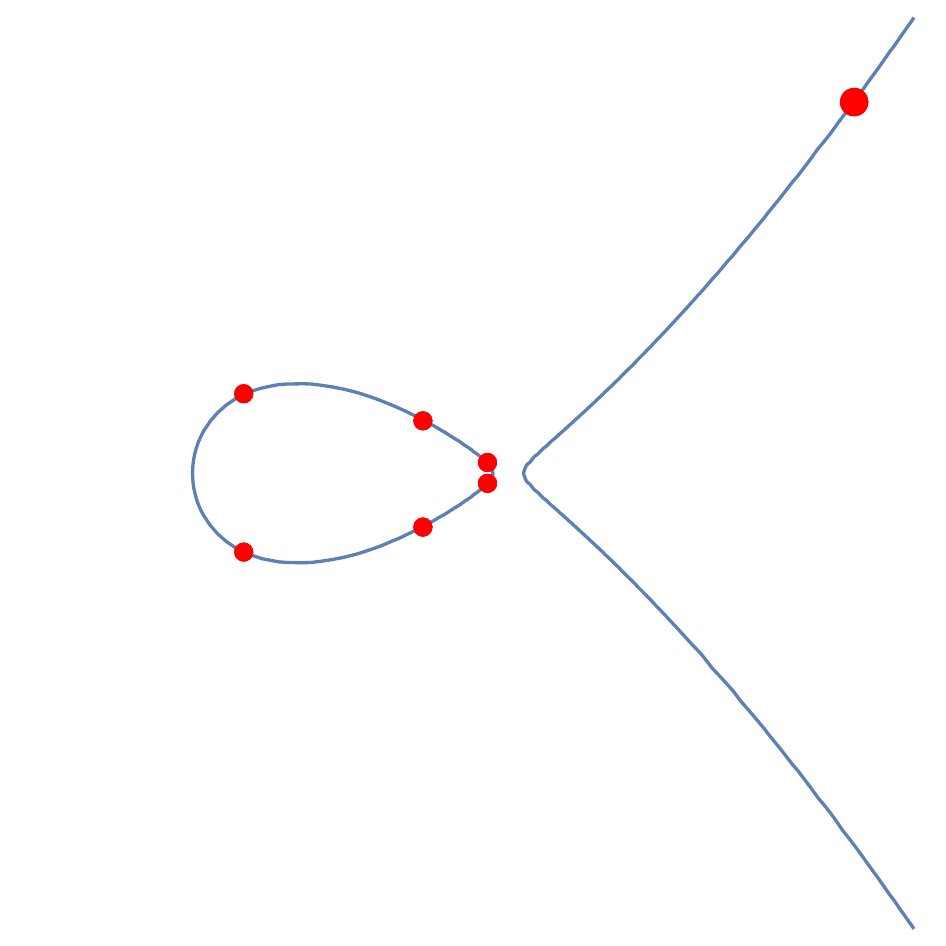}
            \put (45,40) {\tiny{1}}
            \put (52,53) {\tiny{3}}
            \put (45,57) {\tiny{4}}
            \put (52,45) {\tiny{2}}
            \put (25,60) {\tiny{5}}
            \put (25,35) {\tiny{6}}
            \put (90,80) {\tiny{$(a^{-2},a^{-2})$}}
        \end{overpic}
       
        \caption{The picture on the left illustrates the flow in case $(a^{-2},a^{-2})$ is a torsion point of order four. The picture on the right shows the flow in case that point has order six.}
        \label{fig:curveOrder6}
    \end{center}
\end{figure}
\subsection{Correlation kernel}
To explain the connection between the flow on the elliptic curve and the Wiener-Hopf factorization in Proposition \ref{prop:BD} we define functions $a,b,d: \mathcal E_- \to (0,\infty)$ by 
$$
  \begin{dcases}
    a(x,y)=\frac{a(a^2+1)(\alpha^2+1)}{2}\frac{ y-x}{1-a^2 x},\\
    b(x,y)=-\frac{1}{\alpha a x},\\
    d(x,y)=\frac{2a \alpha x(x-1/a^2)}{(a^2+1)(\alpha+1/\alpha)(y-x)}.
  \end{dcases}
$$
Since $x<0$ for $(x,y)\in \mathcal E_-$, these functions are well-defined with no poles and take strictly positive values.  Consider the maps
$$
    \mathcal P_-:(x,y)\mapsto
     b(x,y)
    \begin{pmatrix} 
         a(x,y)& 0 \\
         0 & 1
    \end{pmatrix} 
    \begin{pmatrix} 
        1& 1 \\
        \frac{a^2}{z} & 1
    \end{pmatrix}
    \begin{pmatrix} 
        1& 0 \\
        0 & a(x,y)
    \end{pmatrix},
$$
and 
$$
\mathcal P_+:(x,y)\mapsto
\begin{pmatrix} 
    1 & 0\\
    0&  {\frac{a^2}{\alpha^2}}d(x,y)
   \end{pmatrix} 
   \begin{pmatrix} 
    1& a^2 z \\
    1 & 1
   \end{pmatrix}
   \begin{pmatrix} 
    1& 0 \\
    0 & d(x,y)
   \end{pmatrix}.
$$
The first main result of this paper is  that the factorization \eqref{eq:WH_intro} is given by $$P_{k,\pm}(z)=\mathcal P_\pm(\sigma^k(x,y)).$$
We will discuss this claim at length in Section 4 in a slightly more general  setup and we refer to that section for more details.  The claim is then a special case of Theorem \ref{thm:equivalence}. Of important to us now is that it, together with Proposition \ref{prop:BD}, implies the following.

\begin{theorem}  \label{thm:main_result}
    The correlation kernel $K_N$ from Proposition \ref{prop:BD} can be written as 
\begin{multline} \label{eq:correlationkernelmain}
    \left[K_N((2m + \eps,2 x-j),(2m'+ \eps',2x'-j'))\right]_{j,j'=0}^1\\
    = -\frac{\mathbbm{1}_{2m'+\eps'<2m+\eps}}{2 \pi i} \int_{|z|=1} A_{e}(z)^{-\eps'} (P(z))^{m-m'}A_{o}(z)^{\eps} \frac{ z^{m-x-m'+x'}dz}{(z-a^2)^{m-m'} z}\\
    +\frac{1}{(2 \pi i)^2}
    \oint_{|w|=\rho_1} \oint_{|z|=\rho_2} A_{e}(w)^{-\eps'} P(w)^{N-m'}P_+(w)^{-1}P_-(z)^{-1} P(z)^{m}A_{o}(z)^{\eps} \\
    \times \frac{w^{x'+N-m'}(z-a^2)^{N-m}}{z^{x+N-m}(w-a^2)^{N-m'}}\frac{dz dw}{z(z-w)},
\end{multline}
where 
\begin{equation}\label{eq:pmin}
    P_-(z)  =\prod_{j=0}^{N-1}  b(\sigma^j(x,y))
     \begin{pmatrix} 
      a(\sigma^j(x,y))& 0 \\
      0 & 1
     \end{pmatrix} 
     \begin{pmatrix} 
      1& 1 \\
      \frac{a^2}{z} & 1
     \end{pmatrix}
     \begin{pmatrix} 
      1& 0 \\
      0 & a(\sigma^j(x,y))
     \end{pmatrix}
    \end{equation}
    and
    \begin{equation} \label{eq:pplus}
        P_+(z)=
       \prod_{j=0}^{N-1} 
        \begin{pmatrix} 
         1 & 0\\
         0&  \frac{a^2}{\alpha^2}d(\sigma^{N-1-j}(x,y))
        \end{pmatrix} 
        \begin{pmatrix} 
         1& a^2 z \\
         1 & 1
        \end{pmatrix}
        \begin{pmatrix} 
         1& 0 \\
         0 & d(\sigma^{N-1-j}(x,y))
        \end{pmatrix},
     \end{equation}
    and the contours of integration are counterclockwise oriented circles with radii $\rho_1$ and $\rho_2$ such that  $|a|^2<\rho_1<\rho_2<1/|a|^2$.  
\end{theorem}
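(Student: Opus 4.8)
The plan is to derive \eqref{eq:correlationkernelmain} from Proposition \ref{prop:BD} together with the identification of the Wiener--Hopf iteration with the flow on the curve, namely $P_{k,\pm}(z)=\mathcal P_\pm(\sigma^{k}(x_0,y_0))$. The substance lies in proving this identification; once it is in hand, the stated kernel formula is a matter of bookkeeping, which I carry out at the end. So the proof splits into (i) solving the recursion \eqref{eq:WH_intro} in closed form via the flow, and (ii) substituting and reindexing in Proposition \ref{prop:BD}.

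For step (i) I would argue by induction on $k$. The base case asserts that $P(z)=\mathcal P_-(x_0,y_0)\mathcal P_+(x_0,y_0)$ is a Wiener--Hopf factorization of $P(z)$. Evaluating the three functions at $(x_0,y_0)=(-1,-\tfrac{1-\alpha^2}{1+\alpha^2})$ gives the clean values $a(x_0,y_0)=a\alpha^2$, $b(x_0,y_0)=\tfrac{1}{a\alpha}$, $d(x_0,y_0)=\tfrac1a$, and a direct $2\times 2$ matrix multiplication then verifies that $\mathcal P_-(x_0,y_0)\mathcal P_+(x_0,y_0)$ reproduces $P(z)$ entrywise; the required analyticity is read off the explicit shapes, since $\mathcal P_+(x,y)$ is affine in $z$ with $\det$ proportional to $1-a^2z$ (vanishing at $z=1/a^2>1$), while $\mathcal P_-(x,y)$ and $\mathcal P_-(x,y)^{-1}$ have their only singularities at $z=0$ and $z=a^2$, both inside the disk, so both $\mathcal P_\pm^{\pm1}$ behave as required on $|z|\le 1$ resp.\ $|z|\ge1$. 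For the inductive step recall that $P_{k+1}(z)=P_{k,+}(z)P_{k,-}(z)$; by the inductive hypothesis this equals $\mathcal P_+(p_k)\mathcal P_-(p_k)$ with $p_k=\sigma^k(x_0,y_0)$, so the step reduces to the commutation identity
\[
\mathcal P_+(p)\,\mathcal P_-(p)=\mathcal P_-(\sigma(p))\,\mathcal P_+(\sigma(p)),\qquad p\in\mathcal E_-,
\]
together with the facts that $\sigma$ maps $\mathcal E_-$ to itself and that $a(\cdot),b(\cdot),d(\cdot)$ stay strictly positive on $\mathcal E_-$, so that the right-hand side is again a genuine Wiener--Hopf factorization; uniqueness of such a factorization up to a constant factor (which is harmless, see below) then forces $P_{k+1,\pm}=\mathcal P_\pm(\sigma^{k+1}(x_0,y_0))$.

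The commutation identity is the heart of the matter and the main obstacle. Conceptually it reflects that the map $P_k\mapsto P_{k+1}=P_{k,+}P_kP_{k,+}^{-1}$ is a $z$-dependent conjugation, hence the curve $\det\!\big(zP_k(z)-\lambda I\big)=0$ in the $(z,\lambda)$-plane is a flow invariant; a computation identifies this curve, after a rational change of variables (with $z$ playing the role of $x$ and $\lambda$ that of $y$), with $\mathcal E$ from \eqref{eq:elliptic_aztec}, and the eigenline of $zP_k(z)$ determines a point of $\operatorname{Pic}(\mathcal E)$ which is $p_k$, starting from $p_0=(x_0,y_0)$. Switching the order of a Wiener--Hopf factorization twists the eigenbundle by the divisor class of the point of $\mathcal E$ sitting over the degeneracy locus of $\mathcal P_+$, namely $z=1/a^2$ where $P(1/a^2)$ has rank one; under the identification this point is exactly $(a^{-2},a^{-2})$, whence the shift is $\sigma$. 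Since our pencil does not literally fit the framework of \cite{MV}, in practice I would not rely on this abstract argument alone but anchor it by a direct check: multiply out $Q(z;x,y):=\mathcal P_+(x,y)\mathcal P_-(x,y)$ into an explicit rational matrix of the same type as $P_1(z)$, read off its (essentially unique) Wiener--Hopf factors, and verify that the resulting new triple $(a',b',d')$ equals $(a,b,d)$ evaluated at the point obtained from $(x,y)$ by drawing the chord through $(a^{-2},a^{-2})$ and reflecting the third intersection in the $x$-axis --- i.e.\ that the new $x$-coordinate is the third intersection point and the new $y$-coordinate has the flipped sign. That the flow begins at $(x_0,y_0)$ and remains on the oval $\mathcal E_-$ (so that all denominators and square roots occurring are safe) is the remaining ingredient.

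Finally, for step (ii), with $P_{k,\pm}(z)=\mathcal P_\pm(\sigma^k(x_0,y_0))$ established one has $P(z)^N=P_-(z)P_+(z)$ with $P_\pm$ exactly the products \eqref{eq:pmin}--\eqref{eq:pplus}, and hence $A(z)=(1-a^2/z)^{-N}P(z)^N$ admits the factorization $A_-(z)=(1-a^2/z)^{-N}P_-(z)R$, $A_+(z)=R^{-1}P_+(z)$ required by Proposition \ref{prop:BD}, for a constant matrix $R$ normalizing $A_-$ at infinity. Substituting into \eqref{eq:correlationkernelBD_Intro}, the constant $R$ cancels in the product $A_+(w)^{-1}A_-(z)^{-1}=P_+(w)^{-1}P_-(z)^{-1}\big(1-a^2/z\big)^{N}$, and it remains to: factor each even symbol $A_e$ as $\tfrac{z}{z-a^2}\big(\begin{smallmatrix}1&a\\ a/z&1\end{smallmatrix}\big)$ so as to produce the scalar prefactors $\tfrac{(z-a^2)^{N-m}}{z^{x+N-m}}$ and $\tfrac{w^{x'+N-m'}}{(w-a^2)^{N-m'}}$; regroup the partial products $\prod_{j=m'+1}^{2N}A_j(w)$ and $\prod_{j=1}^{m}A_j(z)$ into powers of $P$ plus a single leftover factor, which after the substitution $m\mapsto 2m+\eps$, $m'\mapsto 2m'+\eps'$ accounts for the $A_e(w)^{-\eps'}$ and $A_o(z)^{\eps}$ appearing in \eqref{eq:correlationkernelmain}; and match the single-contour term with the $\mathbbm 1_{m'<m}$ term of \eqref{eq:correlationkernelBD_Intro}. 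These are purely algebraic manipulations with no analytic content, and they complete the proof.
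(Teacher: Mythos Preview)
Your proposal is correct and follows essentially the same route as the paper. The paper packages your inductive step as an intertwining relation: it defines a bijection $\pi:(0,\infty)\times\mathcal E_-\to\mathcal S(z_1,z_2,c_1,c_2)$ onto a space of matrix symbols, and proves (Theorem~\ref{thm:equivalence}) that $s\circ\pi=\pi\circ\sigma$, where $s$ is the ``switch the Wiener--Hopf factors'' map; the proof of that theorem is exactly the direct computation you describe---multiply out $P_+P_-$, read off the new $(x',y')$, and match with the chord-and-reflection formula for $\sigma$ (Lemma~\ref{lem:additionellipticcurve}). The paper also makes the uniqueness of the factorization precise (Proposition~\ref{prop:uniquefact} plus the diagonal normalization in Definition~\ref{def:flow}), which tidies up your ``up to a constant factor'' remark, but this is cosmetic. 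Your bookkeeping in step~(ii) is likewise what the paper does, though the paper simply says the formula is ``a straightforward consequence'' of the general factorization in Section~\ref{sec:WH} after specializing the parameters as in~\eqref{eq:general_to_aztec}.
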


A  proof of this theorem is given in Section \ref{sec:proofflow}.

If $(a^{-2},a^{-2})$ is a torsion point of order $d$, the flow $(x,y) \mapsto \sigma(x,y)$ is periodic, and the double integral formula can be rewritten in a useful way. 

\begin{corollary}
    Assume that $(a^{-2},a^{-2})$ is a torsion point of order $d$. Define 
    $$
     P_-^{(d)}(z)=P_{0,-}(z)\cdots P_{d-1,-}(z),
    $$
    and  
    $$
        P_+^{(d)}(z)=P_{d-1,+}(z)\cdots P_{0,+}(z).
    $$
Then we can rewrite \eqref{eq:correlationkernelmain} as
\begin{multline} \label{eq:correlationkernelmainperiodic}
    \left[K_{dN}((2m+\eps,2 x-j),(2m'+\eps',2x'-j'))\right]_{j,j'=0}^1\\=-\frac{\mathbbm{1}_{2m'+\eps'<2m+\eps}}{2 \pi i} \oint_{|z|=1} A_{e}(z)^{-\eps'} (P(z))^{m-m'}A_{o}(z)^{\eps} \frac{ z^{m-x-m'+x'}dz}{(z-a^2)^{m-m'} z}\\
    +\frac{1}{(2 \pi i)^2}
    \oint_{|w|=\rho_1} \oint_{|z|=\rho_2}  A_{e}(w)^{-\eps'} P(w)^{dN-m'}(P_+^{(d)}(w))^{-N}(P_-^{(d)}(z))^{-N} P(z)^{m}A_{o}(z)^{\eps}\\
    \times  \frac{w^{x'+dN-m'}(z-a^2)^{dN-m}}{z^{x+dN-m}(w-a^2)^{dN-m'}}\frac{dz dw}{z(z-w)},
\end{multline}
where $|a|^2<\rho_1<\rho_2<1/|a|^2$.
\end{corollary}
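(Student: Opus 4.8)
The plan is to obtain this corollary as an immediate specialization of Theorem \ref{thm:main_result}, using only the periodicity of the flow $\sigma$. The single fact that drives everything is the following: when $(a^{-2},a^{-2})$ is a torsion point of order $d$ on $\mathcal E$, we have $d\cdot(a^{-2},a^{-2})=\mathcal O$ in the group law, hence $\sigma^d(x,y)=(x,y)+d\cdot(a^{-2},a^{-2})=(x,y)$ for every $(x,y)\in\mathcal E_-$. Since the Wiener-Hopf factors produced by the iteration \eqref{eq:WH_intro} are $P_{k,\pm}(z)=\mathcal P_\pm(\sigma^k(x,y))$ — this is precisely the first main result invoked in the proof of Theorem \ref{thm:main_result} — it follows that $P_{k+d,\pm}(z)=P_{k,\pm}(z)$ for all $k\ge 0$; the sequence of elementary factors is genuinely $d$-periodic.

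Next I would apply Theorem \ref{thm:main_result} with $N$ replaced by $dN$. Inspecting \eqref{eq:correlationkernelmain} and \eqref{eq:correlationkernelmainperiodic}, one sees that the only ingredients that are not already literally the matching expressions (after $N\mapsto dN$) are the full factors $P_-(z)$ and $P_+(z)$ from \eqref{eq:pmin}--\eqref{eq:pplus}. Writing these as ordered products of the elementary blocks, $P_-(z)=P_{0,-}(z)P_{1,-}(z)\cdots P_{dN-1,-}(z)$ and $P_+(z)=P_{dN-1,+}(z)\cdots P_{1,+}(z)P_{0,+}(z)$, and grouping the $dN$ factors into $N$ consecutive blocks of length $d$, each block collapses by $d$-periodicity to the same product: $P_{jd,-}(z)\cdots P_{jd+d-1,-}(z)=P_{0,-}(z)\cdots P_{d-1,-}(z)=P_-^{(d)}(z)$ and $P_{jd+d-1,+}(z)\cdots P_{jd,+}(z)=P_{d-1,+}(z)\cdots P_{0,+}(z)=P_+^{(d)}(z)$ for each $j=0,\dots,N-1$. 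Hence $P_-(z)=(P_-^{(d)}(z))^N$ and $P_+(z)=(P_+^{(d)}(z))^N$, so $P_+(w)^{-1}=(P_+^{(d)}(w))^{-N}$ and $P_-(z)^{-1}=(P_-^{(d)}(z))^{-N}$. Substituting these two identities into \eqref{eq:correlationkernelmain}, taken with $N\mapsto dN$, produces exactly \eqref{eq:correlationkernelmainperiodic}.

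The argument is essentially bookkeeping, so I do not expect a genuine obstacle; the only points demanding care are (i) matching the orientations of the ordered products in \eqref{eq:pmin} and \eqref{eq:pplus} — $P_-$ is built left-to-right in increasing index while $P_+$ is built in decreasing index — so that the block decompositions are consistent in both cases, and (ii) noting that the argument only uses $\sigma^d=\mathrm{id}$ and not the minimality of $d$, so the statement remains correct (with possibly redundant blocks) if the true order of $(a^{-2},a^{-2})$ is a proper divisor of $d$. No additional verification is needed: the three analyticity properties required of the factorization $A(z)=A_-(z)A_+(z)$ in Proposition \ref{prop:BD} are already guaranteed through Theorem \ref{thm:main_result} and are visibly preserved under the regrouping, since each $P_{k,+}^{\pm1}$ is analytic in $|z|<1$ and each $P_{k,-}^{\pm1}$ is analytic in $|z|>1$ with $P_-\to I$ at infinity.
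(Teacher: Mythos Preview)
Your proposal is correct and is exactly the (implicit) derivation the paper has in mind: the corollary is stated without proof precisely because it is the immediate specialization of Theorem~\ref{thm:main_result} with $N\mapsto dN$, together with the $d$-periodicity $P_{k+d,\pm}=P_{k,\pm}$ coming from $\sigma^d=\mathrm{id}$, which collapses the length-$dN$ ordered products in \eqref{eq:pmin}--\eqref{eq:pplus} into $N$-th powers of $P_\pm^{(d)}$. One small inaccuracy in your final remark: the individual factors $P_{k,-}(z)$ do \emph{not} tend to the identity as $z\to\infty$ (they tend to an upper-triangular matrix with equal diagonal entries), but this is irrelevant here since you are merely rewriting the already-established formula \eqref{eq:correlationkernelmain} and need not re-verify the hypotheses of Proposition~\ref{prop:BD}.
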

Note that in \eqref{eq:correlationkernelmain} we have replaced the size of the Aztec diamond $N$ by $dN$. This is not necessary and the upcoming analysis can  also be performed for the general case. Since the difference will only involve non-essential  cumbersome bookkeeping, we feel that working with $dN$ instead of $N$ makes for a cleaner presentation.
\subsection{Asymptotics}

The representation of the correlation kernel in \eqref{eq:correlationkernelmainperiodic} is a good starting point for an asymptotic study.  We will compute the microscopic process  in the limit $N\to \infty$ near the point 
\begin{equation}\label{eq:point_zoom_in}
    (2dT,2X)=(2 d \lfloor N\tau\rfloor, 2 \lfloor d N \xi\rfloor), \qquad 0<\tau<1, \ \  -\tfrac12 < \xi-\tau/2<0.
\end{equation}
  That is, we consider the limiting behavior of the correlation kernel 
\begin{equation} \label{eq:local_scaling_limit}
        \left[K_{ d N}\left(\left(2 d T +2m+ \eps, 2X+2x-j\right),\left( 2d T+2m'+\eps',2 X+2x'-j'\right)\right)\right]_{j,j'=0}^1\
\end{equation}
as $N\to \infty$, with $m,m'\in \mathbb Z$ fixed. Note that the first coordinate of the point \eqref{eq:point_zoom_in} is a multiple of $2d$ and the second coordinate is a multiple of $2$. This restriction is made for clarity purposes and is not necessary. Note also that any finite shift from \eqref{eq:point_zoom_in} can be absorbed into the variables $2m+\eps$, $2m'+\eps'$, $2x+j$ +$2x'+j'$ in \eqref{eq:local_scaling_limit}.

\subsubsection{The spectral curve}

To perform the asymptotic analysis it is convenient to diagonalize the matrices $P(w)$, $P(z)$, $P_+^{(d)}(w)$ and $P_-^{(d)}(z)$. 

The spectral curve  $\det (P(z)-\lambda)=0$ can be easily computed: 
\begin{equation} \label{eq:spectral_curve_introduction}
    \lambda^2- \left(\alpha+ \frac{1}{\alpha}\right)(1+a^2) \lambda+(1-a^2z)\left(1-\frac{a^2}{z}\right)=0.
\end{equation}
The curve has branch points at $z=0$, $z=\infty$, and at the zeros of  the discriminant:
\begin{equation} \label{eq:defR}
R(z):= \left(\alpha+ \frac{1}{\alpha}\right)^2(1+a^2)^2-4(1-a^2z)\left(1-\frac{a^2}{z}\right)=0.
\end{equation}
These zeros are negative and will be denoted by $x_1$ and $x_2$, ordered as $x_1<x_2<0$. With these points, we define a Riemann surface $\mathcal R$ consisting of two sheets $\mathcal R_j=\mathbb C \setminus \left((-\infty,x_1) \cup (x_2,0)\right)$, that we connect in the usual crosswise manner along the cuts $(-\infty,x_1)$ and $(x_2,0)$. The sheets have $0$ and $\infty$ as  common points.  See also  Figure \ref{fig:rieman}. We will write $z^{(j)}$ to indicate the point $z$ on the sheet $\mathcal R^{(j)}$. Then we define the square root $(R(z))^{1/2}$ on $\mathcal R$ such that $(R(z^{(1)}))^{1/2}>0$ for $z^{(1)}>0$.  The spectral curve \eqref{eq:spectral_curve_introduction} then defines a meromorphic function on $\mathcal R$ given by 
\begin{equation} \label{eq:lambda_curve}
\lambda(z)=\frac12 \left(\alpha+ \frac{1}{\alpha}\right)(1+a^2)+ \frac12 (R(z))^{1/2},
\end{equation}
with poles at $0$ and $\infty$, and zeros at $(a^{\pm 2})^{(2)}$.  The restrictions of $\lambda$ to $\mathcal R^{(j)}$ will be denoted by $\lambda_j$, i.e., $\lambda_j(z)=\lambda(z^{(j)})$.

Next, consider the spectral curves for $P_-^{(d)}$ and $P_+^{(d)}$,
\begin{align}
    \det (P_-^{(d)}(z)-\mu)=\mu^2&-\mu\Tr P_-^{(d)}(z) +\det P_-^{(d)}(z)=0,\label{eq:spectral_curve_mu_intro}\\
    \det (P_+^{(d)}(z)-\nu)= \nu^2&-\nu\Tr P_+^{(d)}(z) +\det P_+^{(d)}(z)=0, \label{eq:spectral_curve_nu_intro}
\end{align}
These spectral curves factorize \eqref{eq:spectral_curve_introduction} in the following way.

\begin{lemma} \label{lem:spectral_factorization_intro}
   The equations \eqref{eq:spectral_curve_mu_intro}, \eqref{eq:spectral_curve_nu_intro} for $\mu$ and $\nu$ define meromorphic functions on $\mathcal R$ such that 
   \begin{equation} \label{eq:spectralfactorization_intro}
       (\lambda (z))^d=\mu (z) \nu(z),
   \end{equation} 
   for $z\in \mathcal R$.  Then $\mu$ has a zero at $(a^{2})^{(2)}$ and a pole at $0$, both of the same order $d$, and  $\nu$ has  a zero at $(a^{-2})^{(2)}$ and a pole at $\infty$, both of the same order $d$. 

    With $E(z)$ defined by 
    \begin{equation} \label{eq:general_eigenvectors_intro}
     E(z)=
     \begin{pmatrix}
       a\alpha(1+z)& a\alpha(1+z)\\
       \lambda_1(z)- \alpha (a^2+1) & \lambda_ 2(z)- \alpha (a^2+1)
     \end{pmatrix},
   \end{equation}
    we have 
   \begin{equation} \label{eq:spectral_decomposition_Pz_intro}
P(z)=E(z)
\begin{pmatrix}
        \lambda_1(z) & 0 \\
        0 & \lambda_2(z)
\end{pmatrix}E(z)^{-1},
\end{equation}
\begin{equation}  \label{eq:spectral_decomposition_Pminus_intro}
    P^{(d)}_-(z)= E(z) 
    \begin{pmatrix}
        \mu_1(z) & 0 \\
     0 & \mu_2(z)
    \end{pmatrix} 
    E(z)^{-1}, 
\end{equation}
and 
\begin{equation} \label{eq:spectral_decomposition_Pplus_intro} 
    P^{(d)}_+(z)= E(z) 
        \begin{pmatrix}
            \nu_1(z) & 0 \\
            0 & \nu_2(z)
\end{pmatrix} E(z)^{-1}.
\end{equation}
Here $\mu_j(z)=\mu(z^{(j)})$ and $\nu_j(z)=\nu(z^{(j)})$ for $z \in \mathbb C \setminus \left((-\infty,x_1) \cup (x_2,0)\right)$.
\end{lemma}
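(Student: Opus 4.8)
\medskip

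The plan is to prove the four assertions of the lemma essentially in the order they are stated, building everything on the single algebraic identity \eqref{eq:spectral_decomposition_Pz_intro} for $P(z)$ and on the factored form of the Wiener–Hopf matrices. First I would verify \eqref{eq:spectral_decomposition_Pz_intro}: the eigenvalues $\lambda_1(z),\lambda_2(z)$ of $P(z)$ are by definition the two roots of \eqref{eq:spectral_curve_introduction}, so I only need to check that the columns of $E(z)$ in \eqref{eq:general_eigenvectors_intro} are the corresponding eigenvectors. This is a direct computation: writing $P(z)=\begin{pmatrix}\alpha & a\alpha z\\ a/\alpha & 1/\alpha\end{pmatrix}\begin{pmatrix}1 & a\\ a/z & 1\end{pmatrix}$, one computes $P(z)=\begin{pmatrix}\alpha(1+a^2) & a\alpha(1+z)\\ \tfrac{a}{\alpha}\bigl(1+\tfrac1z\bigr) & \tfrac1\alpha(1+a^2)\end{pmatrix}$, and then a vector $(a\alpha(1+z),\lambda-\alpha(1+a^2))^{\mathsf T}$ is killed by $P(z)-\lambda$ precisely when $\lambda$ solves \eqref{eq:spectral_curve_introduction}; I would record this as a one-line verification. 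The fact that $\lambda$, as given by \eqref{eq:lambda_curve}, is meromorphic on $\mathcal R$ with its poles at $0,\infty$ and zeros at $(a^{\pm2})^{(2)}$ follows from the structure of the discriminant $R(z)$ and is already essentially asserted in the text; I would just note that $\det P(z)=(1-a^2z)(1-a^2/z)$ vanishes exactly at $z=a^{\pm2}$, that the product of the two eigenvalues equals this determinant, and that on the sheet $\mathcal R^{(1)}$ one has $\lambda_1=\tfrac12(\alpha+\tfrac1\alpha)(1+a^2)+\tfrac12 R^{1/2}\neq 0$ near $z=a^{\pm2}$, so the zero sits on $\mathcal R^{(2)}$.

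\medskip

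Next I would address \eqref{eq:spectral_decomposition_Pminus_intro} and \eqref{eq:spectral_decomposition_Pplus_intro}. The key structural input is the first main result quoted just above the theorem: $P_{k,\pm}(z)=\mathcal P_\pm(\sigma^k(x_0,y_0))$, so that $P_-^{(d)}(z)=\prod_{k=0}^{d-1}\mathcal P_-(\sigma^k(x_0,y_0))$ and $P_+^{(d)}(z)=\prod_{k=0}^{d-1}\mathcal P_+(\sigma^{d-1-k}(x_0,y_0))$ with the explicit matrix forms of $\mathcal P_\pm$ given in the text. The crucial point is that each individual factor $\mathcal P_-(x_j,y_j)$ and $\mathcal P_+(x_j,y_j)$ is conjugated to a diagonal matrix by the *same* matrix $E(z)$; equivalently, $E(z)$ simultaneously diagonalizes all $P_{k,\pm}(z)$. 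To see this I would argue at the level of the elementary factorization: the defining relation of the Wiener–Hopf flow is $P_k(z)=P_{k,-}(z)P_{k,+}(z)$ and $P_{k+1}(z)=P_{k,+}(z)P_{k,-}(z)$, and $P_0(z)=P(z)$, hence inductively all $P_k(z)$ are conjugate to each other and in particular have the same eigenvalues $\lambda_1(z),\lambda_2(z)$. Moreover $P_{k,-}$ and $P_{k,+}$ commute with $P_k$ (since $P_k=P_{k,-}P_{k,+}$ means $P_{k,-}^{-1}P_kP_{k,-}=P_{k,+}P_{k,-}=P_{k+1}$, not quite commutation) — so the cleaner route is: show directly from the explicit formulas for $\mathcal P_\pm$ that $E(z)^{-1}\mathcal P_\pm(x_j,y_j)E(z)$ is diagonal. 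That is, I would compute $E(z)^{-1}\mathcal P_-(x_j,y_j)E(z)$ by brute force and check the off-diagonal entries vanish, using the curve relation $y_j^2 = x_j^2 + \tfrac{4x_j(x_j-a^2)(x_j-1/a^2)}{(a+1/a)^2(\alpha+1/\alpha)^2}$ together with the definitions of $a(\cdot),b(\cdot),d(\cdot)$ — this is where the elliptic curve equation is used. Once each factor is diagonalized by $E(z)$, the products telescope: $P_-^{(d)}(z)=E(z)\,\mathrm{diag}(\mu_1,\mu_2)\,E(z)^{-1}$ with $\mu_j(z)$ the product of the $j$-th diagonal entries of the factors, and similarly for $\nu_j$.

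\medskip

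With the diagonalizations in hand, \eqref{eq:spectralfactorization_intro} is immediate: from $P(z)^d = P_-^{(d)}(z)P_+^{(d)}(z)$ (which is the $d$-step specialization of the Wiener–Hopf construction $A(z)=A_-(z)A_+(z)$ restricted to one period, using that $(a^{-2},a^{-2})$ has order $d$ so $\mathcal P_\pm(\sigma^d(\cdot))=\mathcal P_\pm(\cdot)$), conjugating by $E(z)^{-1}$ gives $\mathrm{diag}(\lambda_1^d,\lambda_2^d)=\mathrm{diag}(\mu_1\nu_1,\mu_2\nu_2)$, i.e. $\lambda(z)^d=\mu(z)\nu(z)$ on each sheet, hence on $\mathcal R$. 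It remains to locate the zeros and poles of $\mu$ and $\nu$. That $\mu,\nu$ are meromorphic on $\mathcal R$ follows because $E(z)^{\pm1}$ and $\lambda(z)$ are, and because $\mu+\nu$-type symmetric functions of the diagonal entries are rational in $z$ (they are traces/determinants of the matrices $P_\pm^{(d)}$, which are rational in $z$), so each of $\mu,\nu$ is a two-valued algebraic function branched only over the branch points of $\mathcal R$. For the divisor count: $\det P_-^{(d)}(z)$ is a product of $\det \mathcal P_-(x_j,y_j)$, each of which I would compute explicitly from the $3\times3$ matrix product defining $\mathcal P_-$ — its $z$-dependence comes solely from $\det\begin{pmatrix}1&1\\ a^2/z&1\end{pmatrix}=1-a^2/z$, so $\det P_-^{(d)}(z)=c\,(1-a^2/z)^d$ for a $z$-independent constant $c>0$; hence $\mu_1(z)\mu_2(z)=c(1-a^2/z)^d$ vanishes to order $d$ at $z=a^2$ and has a pole of order $d$ at $z=0$, and since on sheet $\mathcal R^{(1)}$ the entries stay nonzero there (parallel to the $\lambda_1$ argument), the zero at $z=a^2$ and the pole at $z=0$ both sit on $\mathcal R^{(2)}$, giving $\mu$. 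The analogous computation $\det P_+^{(d)}(z)=c'(1+a^2z)^{?}$ — more precisely the $z$-dependence of $\det\mathcal P_+$ comes from $\det\begin{pmatrix}1&a^2z\\1&1\end{pmatrix}=1-a^2z$, so $\det P_+^{(d)}(z)=c'(1-a^2z)^d$ — puts the order-$d$ zero of $\nu$ at $z=a^{-2}$ and the order-$d$ pole at $z=\infty$, both on $\mathcal R^{(2)}$, and consistency with $\lambda^d=\mu\nu$ (whose zeros at $(a^{\pm2})^{(2)}$ and poles at $0,\infty$ must split between $\mu$ and $\nu$) pins down the assignment uniquely.

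\medskip

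\textbf{Main obstacle.} The hard part will be the explicit verification that $E(z)$ simultaneously diagonalizes every factor $\mathcal P_\pm(x_j,y_j)$ — equivalently, that the off-diagonal entries of $E(z)^{-1}\mathcal P_\pm(x_j,y_j)E(z)$ vanish identically. This is the step where the specific form of the elliptic curve \eqref{eq:elliptic_aztec}, the initial point $(x_0,y_0)$, and the formulas for $a(\cdot),b(\cdot),d(\cdot)$ all have to conspire; it is a finite but genuinely involved rational-function identity in $z$ with parameters $x_j,y_j$ constrained to the curve. A cleaner alternative, which I would try first, is to avoid diagonalizing each factor separately and instead show the two eigenlines of $P(z)$ are preserved by the whole flow: prove that $\mathcal P_+(x_j,y_j)$ maps each eigenvector of $P_j(z)$ to an eigenvector of $P_{j+1}(z)$ (and the $E(z)$ appearing is $z$-dependent but $j$-independent precisely because $P_j(z)$ all share the eigenvectors of $P(z)=P_0(z)$), using the commutation-type relations $P_{j,-}P_{j,+}=P_j$, $P_{j,+}P_{j,-}=P_{j+1}$ inductively with the base case $P_0=P$; then only the base diagonalization \eqref{eq:spectral_decomposition_Pz_intro} needs the brute-force check, and everything else is formal. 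Whichever route, once the common-eigenvector structure is established the bookkeeping of zeros and poles via the determinant computations is routine.
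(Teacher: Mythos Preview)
Your central step --- showing that $E(z)$ simultaneously diagonalizes $P_-^{(d)}(z)$ and $P_+^{(d)}(z)$ --- rests on a claim that is false. You propose to verify that $E(z)$ diagonalizes each \emph{individual} factor $P_{k,\pm}(z)=\mathcal P_\pm(\sigma^k(x_0,y_0))$. But if $E(z)$ diagonalized both $P_{0,-}(z)$ and $P_{0,+}(z)$, they would commute, giving $P_1=P_{0,+}P_{0,-}=P_{0,-}P_{0,+}=P_0$; yet by Theorem~\ref{thm:equivalence} the step $P_0\mapsto P_1$ corresponds under $\pi$ to the translation $\sigma$ on $\mathcal E_-$, which has no fixed points (it is translation by the nontrivial element $(a^{-2},a^{-2})$). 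So the ``genuinely involved rational-function identity'' you plan to check simply does not hold. Your ``cleaner alternative'' restates the same false premise --- that all $P_j(z)$ share the eigenvectors of $P_0(z)$ --- and fails for the same reason.

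The paper's argument never touches the individual factors. Its key observation is
\[
\bigl(P_-^{(d)}(z)\,P_+^{(d)}(z)\bigr)^2 \;=\; P(z)^{2d} \;=\; \bigl(P_-^{(d)}(z)\bigr)^2\bigl(P_+^{(d)}(z)\bigr)^2,
\]
where the left equality is $(P^d)^2$ with $P^d=P_-^{(d)}P_+^{(d)}$, and the right equality comes from running the iterative Wiener--Hopf procedure on $P^{2d}$ and using the $d$-periodicity of the flow (so that $P_{k+d,\pm}=P_{k,\pm}$). For invertible $A,B$ the relation $(AB)^2=A^2B^2$ forces $BA=AB$; hence $P_-^{(d)}$, $P_+^{(d)}$, and $P^d$ commute and are simultaneously diagonalized by $E(z)$. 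With this in place, the remainder of your plan --- deducing $\lambda^d=\mu\nu$ from the diagonalization, and locating the divisors of $\mu,\nu$ via $\det P_\pm^{(d)}(z)$ together with the known divisor of $\lambda$ --- is correct and matches the paper.
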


The proof of this lemma will be given in Section \ref{sec:prooflemmasp}
\begin{figure}
    \begin{center}
    \begin{tikzpicture}[scale=.7]
        \draw (0,0)--(2,2)--(12,2)--(10,0)--(0,0);
        \draw (0,-3)--(2,-1)--(12,-1)--(10,-3)--(0,-3);
        \draw[very thick] (1,1)--(4,1);
        \draw[very thick] (6,1)--(7,1);
        \draw[very thick] (6,-2)--(7,-2);
        \draw[very thick] (1,-2)--(4,-2);
        \draw[help lines,dashed] (4,1)--(4,-2);
        \draw[help lines,dashed] (6,1)--(6,-2);
        \draw[help lines,dashed] (7,1)--(7,-2);
        \draw[help lines,dashed] (11,1)--(11,-2);

        \filldraw (8,1) circle(.05);
        \filldraw (10,-2) circle(.05);
        \filldraw (7,1) circle(.05);
        \filldraw (7,-2) circle(.05);
        \filldraw (11,1) circle(.05);
        \filldraw (11,-2) circle(.05);
        \draw (4,1.3) node {\small{$x_1$}};
        \draw (6,1.3) node {\small{$x_2$}};
        \draw (7,1.3) node {\small{$0$}};
        \draw (8,1.3) node {\small{$a^2$}};
        \draw (4,-2.5) node {\small{$x_1$}};
        \draw (6,-2.5) node {\small{$x_2$}};
        \draw (7,-2.5) node {\small{$0$}};
        \draw (10,-2.5) node {\small{$1/a^2$}};
        \draw (11,-2.5) node {\small{$\infty$}};
        \draw (11,1.3) node {\small{$\infty$}};

        \draw[help lines,orange,dashed] (4,1)--(6,1)--(6,-2)--(4,-2)--(4,1);
        \draw[help lines,orange,dashed] (7,1)--(11,1)--(11,-2)--(7,-2)--(7,1);
      
    \end{tikzpicture}

    \caption{The two sheeted Riemann surface $\mathcal R$. The  dashed lines represent the cycles $\mathcal C_1$ and $\mathcal C_2$.}
    \label{fig:rieman}
\end{center}
\end{figure}
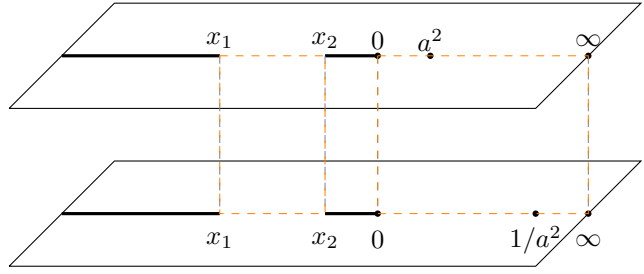

One particular consequence of this lemma is that we can simultaneously diagonalize $P(z)$ and $P_\pm^{(d)}(z)$. In the following theorem we use this to rewrite the correlation kernel in ~\eqref{eq:correlationkernelmainperiodic}.

\begin{theorem}\label{thm:correlationkernel_spectralcurve}
    Assume $(a^{-2},a^{-2})$ is a torsion point of order $d$. Set, with $E(z)$ as in \eqref{eq:spectral_curve_introduction},  
    \begin{equation} \label{eq:defF}
        F(z)= \begin{dcases}  E(z)
            \begin{pmatrix}
                1 & 0 \\
                0 & 0 
            \end{pmatrix}
            E(z)^{-1},& z \in \mathcal R_1,\\
             E(z)
                \begin{pmatrix}
                    0 & 0 \\
                    0 & 1
                \end{pmatrix}
                E(z)^{-1},& z \in \mathcal R_2.
        \end{dcases}
    \end{equation}
    Then,
\begin{multline} \label{eq:correlationkernelmainperiodiceigenvalues}
    \left[K_{   d N}((2dT+2m+\eps, 2X +2x-j),(2dT +2m'+ \eps',2X +2x'-j'))\right]_{j,j'=0}^1\\
    = -\frac{\mathbbm{1}_{2m'+\eps'<2m+\eps}}{2 \pi i} 
    \int_{\gamma_2^{(1)}\cup \gamma_2^{(2)}} A_e(z)^{-\eps'} F(z)A_o(z)^{\eps} \lambda(z) ^{m-m'}\frac{z^{m-x-m'+x'}}{(z-a^2)^{m-m'}} \frac{dz}{z}\\
    +\frac{1}{(2 \pi i)^2}
    \oint_{\gamma_1^{(1)}\cup \gamma_1^{(2)}} \oint_{\gamma_2^{(1)} \cup \gamma_2^{(2)}} A_e(w)^{-\eps'}  F(w) F(z) A_{o}(z)^{\eps} \frac{\lambda(z)^{m} }{\lambda(w)^{m'}}\frac{w^{x'-m'} }{z^{x-m}}\frac{(w-a^2)^{m'}}{(z-a^2)^{m}} \\
    \times 
    \frac{\mu(w)^{N-T}}{ \mu(z)^{N-T} }\frac{\nu(z)^{T}}{\nu(w)^{T}}
     \frac{w^{d(N-T)+X}}{z^{d(N-T)+X}}\frac{(z-a^2)^{d(N-T)}}{(w-a^2)^{d(N-T)}}\frac{dw dz}{z(z-w)},
\end{multline}
where $\gamma_2^{(1,2)}$ are the  unit circles with counterclockwise orientation on the sheets $\mathcal R_{1,2}$,  $\gamma_1^{(1)}$ is a  counterclockwise oriented contour inside the contour $\gamma_2^{(1)}$ on the sheet $\mathcal R_1$ that goes around $(a^2)^{(1)}$ and the cut $[x_2,0]$,  and $\gamma_1^{(2)}$ is a  counterclockwise oriented contour on the sheet $\mathcal R_2$ inside the contour $\gamma_2^{(2)}$  that goes around  the cut $[x_2,0]$.  See also Figure \ref{fig:contours_1}.
\end{theorem}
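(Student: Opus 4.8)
The plan is to transform the periodic double-contour formula \eqref{eq:correlationkernelmainperiodic} (with $N$ replaced by $dN$) into \eqref{eq:correlationkernelmainperiodiceigenvalues} in four steps: absorb the base point $(2dT,2X)$ into the discrete variables, simultaneously diagonalize all matrix factors using Lemma~\ref{lem:spectral_factorization_intro}, lift the contour integrals to the two-sheeted surface $\mathcal R$, and finally deform the contours. For the first step, substituting $(m,m',x,x')\mapsto(dT+m,\,dT+m',\,X+x,\,X+x')$ in \eqref{eq:correlationkernelmainperiodic} leaves the single integral term untouched, while in the double integral the matrix product becomes $A_e(w)^{-\eps'}P(w)^{d(N-T)-m'}\bigl(P_+^{(d)}(w)\bigr)^{-N}\bigl(P_-^{(d)}(z)\bigr)^{-N}P(z)^{dT+m}A_o(z)^{\eps}$ and the scalar prefactor reorganizes into $g(w)\,h(z)\,(z-w)^{-1}$ with $g$ and $h$ the Laurent monomials visible in \eqref{eq:correlationkernelmainperiodiceigenvalues}.

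For the second step I would use the common diagonalization \eqref{eq:spectral_decomposition_Pz_intro}--\eqref{eq:spectral_decomposition_Pplus_intro}: since $P$, $P_-^{(d)}$, $P_+^{(d)}$ are all conjugated by the same $E$,
\[
P(w)^{d(N-T)-m'}\bigl(P_+^{(d)}(w)\bigr)^{-N}=E(w)\operatorname{diag}\!\Bigl(\lambda_j(w)^{d(N-T)-m'}\nu_j(w)^{-N}\Bigr)_{j=1,2}E(w)^{-1},
\]
and likewise $\bigl(P_-^{(d)}(z)\bigr)^{-N}P(z)^{dT+m}=E(z)\operatorname{diag}\bigl(\mu_j(z)^{-N}\lambda_j(z)^{dT+m}\bigr)_j E(z)^{-1}$. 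Applying $\lambda^d=\mu\nu$ from \eqref{eq:spectralfactorization_intro} to rewrite $\nu^{-(N-T)}=\lambda^{-d(N-T)}\mu^{N-T}$ in the $w$-factor and $\lambda^{dT}\mu^{-T}=\nu^{T}$ in the $z$-factor, the diagonal entries become exactly $\lambda(w)^{-m'}\mu(w)^{N-T}\nu(w)^{-T}$ and $\lambda(z)^{m}\mu(z)^{-(N-T)}\nu(z)^{T}$; note that the whole product does \emph{not} telescope, since the inner factor $E(w)^{-1}E(z)$ survives and is kept inside $F(w^{(j)})F(z^{(k)})=E(w)\Pi_jE(w)^{-1}E(z)\Pi_kE(z)^{-1}$.

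The third step passes to $\mathcal R$. With $\Pi_1=\operatorname{diag}(1,0)$, $\Pi_2=\operatorname{diag}(0,1)$ and $F$ as in \eqref{eq:defF}, one has $E(u)\operatorname{diag}(f_1,f_2)E(u)^{-1}=f(u^{(1)})F(u^{(1)})+f(u^{(2)})F(u^{(2)})$ for the two branches $f_{1,2}$ of a function $f$ on $\mathcal R$, so integrating such an expression against a rational (single-valued) factor over any contour $\Gamma\subset\mathbb C$ avoiding the cuts equals integrating $f(u)F(u)$ against the same factor over the lift of $\Gamma$ to $\mathcal R$, which — $\Gamma$ being disjoint from the cuts and enclosing the branch points $x_2,0$ with trivial monodromy — splits into one loop on each sheet. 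Applied to the single integral with $\Gamma=\{|z|=1\}$ this produces the first line of \eqref{eq:correlationkernelmainperiodiceigenvalues}; applied to the double integral in both variables (choosing $\rho_2=1$, allowed since \eqref{eq:correlationkernelmainperiodic} only requires $|a|^2<\rho_1<\rho_2<|a|^{-2}$) it turns $\oint_{|w|=\rho_1}\oint_{|z|=1}$ into an integral over $(\text{lift of }\{|w|=\rho_1\})\times(\gamma_2^{(1)}\cup\gamma_2^{(2)})$ with the matrix part replaced by $F(w)F(z)$, the four sheet-combinations reproducing the four terms of the expanded double sum.

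The final step deforms the $w$-contour on each sheet from the lift of $\{|w|=\rho_1\}$ down to $\gamma_1^{(1)}$, resp.\ $\gamma_1^{(2)}$. Since $\gamma_1^{(j)}$ stays strictly inside $\gamma_2^{(j)}$, the pole of $(z-w)^{-1}$ is never crossed, so one only tracks the $w$-singularities on each sheet. Combining the zero/pole orders of $\lambda,\mu,\nu$ at $0,\infty,(a^{\pm2})^{(1,2)}$ from Lemma~\ref{lem:spectral_factorization_intro} with the explicit factor $w^{X+x'+d(N-T)-m'}(w-a^2)^{m'-d(N-T)}$ and with the pole of $A_e(w)^{-\eps'}$ at $w=0$, one checks that on sheet $2$ the order-$d(N-T)$ zero of $\mu(w)^{N-T}$ at $(a^2)^{(2)}$ cancels the pole of $(w-a^2)^{m'-d(N-T)}$, so the sheet-$2$ integrand is holomorphic there and its loop need only enclose the ramification point $0$ and the cut $[x_2,0]$; on sheet $1$ there is a genuine pole at $(a^2)^{(1)}$, which together with $[x_2,0]$ must be enclosed — exactly the description of $\gamma_1^{(1)},\gamma_1^{(2)}$. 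As no singularities lie in the regions swept out, Cauchy's theorem yields \eqref{eq:correlationkernelmainperiodiceigenvalues} with no extra terms. I expect this last step to be the main obstacle: one must carefully match the orders of vanishing or blow-up of the multivalued factors $\lambda,\mu,\nu,E$ — especially at the ramification points $0,\infty$, where the local parameter is a square root — against the rational prefactors, so as to be certain which singularities lie between the old and new contours and that no residue contributions appear.
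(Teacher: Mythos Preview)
Your proposal is correct and follows essentially the same route as the paper's proof: substitute the base point, use the simultaneous diagonalization from Lemma~\ref{lem:spectral_factorization_intro} together with $\lambda^d=\mu\nu$ to rewrite the matrix factors as $\sum_j F(\,\cdot\,^{(j)})(\text{eigenvalue})$, and then interpret the sum over $j$ as an integral over both sheets of $\mathcal R$. The paper compresses all of this into a few lines via \eqref{eq:powerPinspectral1}--\eqref{eq:powerPinspectral2}; you spell out the same mechanism in more detail.

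Your step~4 is more elaborate than needed. Since $x_1<-1<x_2<0$ (check $R(-1)>0$ for $\alpha<1$), one can simply choose $\rho_1\in(\max(a^2,|x_2|),1)$ so that the circle $\{|w|=\rho_1\}$ already encloses $a^2$ and the entire cut $[x_2,0]$ while staying inside the unit circle and outside the other cut. Its lift to each sheet is then directly an admissible $\gamma_1^{(j)}$; on sheet~2 it also encloses $(a^2)^{(2)}$, but, as you correctly verified via the order count, there is no singularity there, which is exactly the paper's remark following the theorem. So no genuine deformation is required --- the contours match by choice of $\rho_1$.
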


The proof of this Theorem will be given in 
\ref{sec:prooftheoremp}.

Note that $A_e(w)^{-1}$ is analytic at $w=a^2$ (even though $A_e(w)$ is not). Moreover,  $\lambda(w)^{-m'}\mu(w)^{N-T}$  has a zero at $w=(a^2)^{(2)}$ of order $d(N-T)-m'$,  and this zero cancels the pole at $w=(a^2)^{(2)}$ in the double integral in \eqref{eq:correlationkernelmainperiodiceigenvalues}. The contour $\gamma_1^{(2)}$ therefore does not have to go around $(a^2)^{(2)}$.

By passing to the eigenvalues and spectral curves we in fact are essentially looking at a scalar problem, instead of a matrix-valued one. 

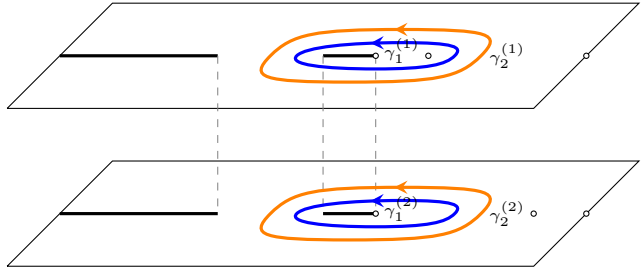
\begin{figure}[t]
    \begin{center}
    \begin{tikzpicture}[scale=.7,path/.append style={
        decoration={
            markings,
            mark=at position 0.5 with {\arrow[xshift=2.5\pgflinewidth,>=stealth]{>}}
        },
        postaction=decorate,
        thick,
      }]
        \draw (0,0)--(2,2)--(12,2)--(10,0)--(0,0);
        \draw (0,-3)--(2,-1)--(12,-1)--(10,-3)--(0,-3);
        \draw[very thick] (1,1)--(4,1);
        \draw[very thick] (6,1)--(7,1);
        \draw[very thick] (6,-2)--(7,-2);
        \draw[very thick] (1,-2)--(4,-2);
        \draw[help lines,dashed] (4,1)--(4,-2);
        \draw[help lines,dashed] (6,1)--(6,-2);
        \draw[help lines,dashed] (7,1)--(7,-2);

        \draw[path,orange, very thick] plot [smooth cycle, tension=1.5] coordinates {(5,1)  (6.5,.5)(9,1) (7.5,1.5) };
        \draw[path,very thick, blue] plot [smooth cycle, tension=1.2] coordinates {(5.5,1) (7,0.75)  (8.5,1) (7.5,1.25)};
        \draw[path,very thick, blue] plot [smooth cycle, tension=1.2] coordinates {(5.5,-2) (7,-2.25)  (8.5,-2) (7.5,-1.75)};
    
        \draw[path,orange, very thick] plot [smooth cycle, tension=1.5] coordinates {(5,-2) (6.5,-2.5)  (9,-2) (7.5,-1.5)};
        \draw (9.5,1) node {\tiny{$\gamma_2^{(1)}$}};
        \draw (9.5,-2) node {\tiny{$\gamma_2^{(2)}$}};
        \draw (7.5,1.1) node {\tiny{$\gamma_1^{(1)}$}};
        \draw (7.5,-1.9) node {\tiny{$\gamma_1^{(2)}$}};
        \filldraw[fill=white] (8,1) circle(.05);
        \filldraw[fill=white](10,-2) circle(.05);
        \filldraw[fill=white](7,1) circle(.05);
        \filldraw[fill=white](7,-2) circle(.05);
        \filldraw[fill=white](11,1) circle(.05);
        \filldraw[fill=white] (11,-2) circle(.05);
    \end{tikzpicture}
    \caption{The contours of integration in \eqref{eq:correlationkernelmainperiodiceigenvalues}. The blue contour represents $\gamma_1$ and the orange  contours are the unit circles on the two different sheets.}
    \label{fig:contours_1}
\end{center}
\end{figure}

\begin{remark}
    We note that the spectral curve $\det \left(P(z)-\lambda\right)=0$ and the elliptic curve  $\mathcal E$ in \eqref{eq:elliptic_aztec} are related. Indeed, \eqref{eq:elliptic_aztec} can be written as 
    $$
    \det \left(P(x)-\tfrac12 (a^2+1)(\alpha+1/\alpha)\left(1+y/x\right)\right)=0.
    $$ 
    In other words, the elliptic curve  $\mathcal E$ equals the spectral curve after changing the spectral variable. 
\end{remark}
\subsubsection{Saddle point equation and classification of different regions} \label{sec:char}

The representation \eqref{eq:correlationkernelmainperiodiceigenvalues} is a very good starting point for asymptotic analysis. To illustrate this we will perform a partial asymptotic study, based on a saddle point analysis.  We note that a similar analysis has been given in \cite{B,DK}.  An interesting feature is that our analysis will depend on the torsion $d$, but in such a way that we can treat all values of $d$ simultaneously. 

To perform a saddle point analysis of \eqref{eq:correlationkernelmainperiodiceigenvalues} we need to find the saddle points and the contours of steepest descent/ascent for the action defined by 
\begin{equation} \label{eq:defPhi}
 \Phi(z;\tau,\xi)=(1-\tau)\log \mu(z) -\tau \log \nu(z) +d(1-\tau+\xi) \log z -d(1-\tau)\log(z-a^2).
\end{equation}
This is a multi-valued function, but the differential 
$$
\Phi'(z) dz
$$
is single valued on $\mathcal R$. Its zeros are the saddle points for $\Re \Phi$, and we will be especially interested in them. Let $\mathcal C_1$ be the cycle on $\mathcal R$ defined by connecting the segments $(x_1,x_2)$ on $\mathcal R_1$ and $\mathcal R_2$ at the end points $x_1$ and $x_2$. Similarly, let $\mathcal C_2$ be the cycle  that combines the copies of $(0,\infty)$ on both sheets.

\begin{proposition}\label{prop:four_saddles}
       The differential $\Phi'(z)dz$ has simple poles at $0$, $(a^2)^{(1)}$, $(1/a^2)^{(2)}$ and $\infty$. There  are four saddle points (i.e., the critical points where $\Phi'(z)dz=0$) counted according to multiplicity. There are at least two distinct saddle points on the cycle $\mathcal C_1$.
\end{proposition}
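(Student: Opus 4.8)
The plan is to treat the three assertions in order: (i) locate the poles of the single-valued differential $\Phi'(z)\,dz$ on $\mathcal R$; (ii) deduce the number of zeros from the fact that $\mathcal R$ has genus one; (iii) produce two distinct zeros lying on $\mathcal C_1$ by a maximum/minimum argument exploiting that $\mathcal C_1$ is a real locus.

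For (i), I would expand, using \eqref{eq:defPhi},
\[
\Phi'(z)\,dz=(1-\tau)\,\frac{d\mu}{\mu}-\tau\,\frac{d\nu}{\nu}+d(1-\tau+\xi)\,\frac{dz}{z}-d(1-\tau)\,\frac{dz}{z-a^{2}},
\]
where each logarithmic differential has only simple poles, sitting at the zeros and poles of the corresponding function with residue equal to the order. The divisors are read off: by Lemma~\ref{lem:spectral_factorization_intro}, $\mu$ has a zero of order $d$ at $(a^{2})^{(2)}$ and a pole of order $d$ at $0$, while $\nu$ has a zero of order $d$ at $(a^{-2})^{(2)}$ and a pole of order $d$ at $\infty$; as a function on $\mathcal R$, $z$ has a double zero at $0$ and a double pole at $\infty$ (both branch points, so $dz/z$ still has a simple pole there, with residue $\pm2$), and $z-a^{2}$ has simple zeros at $(a^{2})^{(1)},(a^{2})^{(2)}$ and a double pole at $\infty$. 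Collecting residues, the contribution at $(a^{2})^{(2)}$ is $(1-\tau)d-d(1-\tau)=0$, so $(a^{2})^{(2)}$ is \emph{not} a pole; the residues at $0$, $(a^{2})^{(1)}$, $(1/a^{2})^{(2)}$, $\infty$ come out to $d(1-\tau+2\xi)$, $-d(1-\tau)$, $-\tau d$, $d(\tau-2\xi)$ respectively. All four are nonzero: $0<\tau<1$ handles the middle two, and $-\tfrac12<\xi-\tfrac{\tau}{2}<0$ gives $1-\tau+2\xi>0$ and $\tau-2\xi>0$. (As a consistency check, the four residues sum to zero.) Hence $\Phi'(z)\,dz$ has exactly the four simple poles $0$, $(a^{2})^{(1)}$, $(1/a^{2})^{(2)}$, $\infty$. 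For (ii), $\mathcal R$ is a double cover of $\mathbb{P}^{1}$ branched at the four points $x_1,x_2,0,\infty$, so by Riemann--Hurwitz it has genus one; the canonical divisor has degree zero, so the nonzero differential $\Phi'(z)\,dz$ has as many zeros as poles counted with multiplicity, namely four, and these are the saddle points.

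For (iii), the key observation is that $\mathcal C_1$ is pointwise fixed by the anti-holomorphic involution (complex conjugation) of $\mathcal R$: by \eqref{eq:defR} one has $R(z)>0$ on $(x_1,x_2)$, so over this interval both sheets are real and conjugation acts trivially. Since $\mathcal R$, $z$, $z-a^{2}$, $\lambda$ (see \eqref{eq:lambda_curve}), $\mu$ and $\nu$ are all defined over $\mathbb R$, they take real values on $\mathcal C_1$; and none of their divisors meets $\mathcal C_1$, so $\Phi$ is holomorphic (after a choice of branches of the logarithms) on a neighbourhood of $\mathcal C_1$ and $\Phi'(z)\,dz$ is holomorphic there. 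Fixing a smooth regular parametrization $\gamma:S^{1}\to\mathcal C_1$, the quantity $(\Phi'(z)\,dz)(\gamma'(t))=(\Phi\circ\gamma)'(t)$ is real for every $t$ (on the two open arcs because all the data are real, and at the branch points $x_1,x_2$ by continuity), hence $\tfrac{d}{dt}\Re\Phi(\gamma(t))=(\Phi'(z)\,dz)(\gamma'(t))$, which vanishes precisely at the zeros of $\Phi'(z)\,dz$ on $\mathcal C_1$. Now $t\mapsto\Re\Phi(\gamma(t))$ is smooth on $S^{1}$ and non-constant --- otherwise $(\Phi'(z)\,dz)(\gamma')\equiv0$ on $\mathcal C_1$, forcing $\Phi'(z)\,dz\equiv0$ on all of $\mathcal R$, contradicting (i) (one may also note $\Im\Phi$ is constant on $\mathcal C_1$, so constancy of $\Re\Phi$ would make $\Phi$ constant) --- so it attains its maximum and minimum at two distinct points of $S^{1}$, and the images of these two points are distinct zeros of $\Phi'(z)\,dz$ lying on $\mathcal C_1$. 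This is exactly the claim.

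The main obstacle I anticipate is the bookkeeping in step (i): one has to handle the branch points $0,\infty$ correctly (so that $dz/z$ contributes residue $\pm2$ rather than $\pm1$) and to feed in the precise orders of the zeros and poles of $\mu,\nu$ from Lemma~\ref{lem:spectral_factorization_intro}; the exact cancellation at $(a^{2})^{(2)}$ and the strict sign conditions on $\tau,\xi$ are what make the pole set and hence the count of four come out as stated. Step (ii) is then a one-line application of the genus formula, and the only genuinely non-mechanical ingredient of step (iii) is recognizing $\mathcal C_1$ as a component of the real locus of $\mathcal R$, after which it is a short real-analysis argument; see also Figure~\ref{fig:rieman} for the geometry of the cycles.
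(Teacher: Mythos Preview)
Your proof is correct and runs parallel to the paper's. For (i) the paper writes $\Phi'$ out sheet by sheet, using the identities $\mu_1\mu_2=\mathrm{const}\cdot(z-a^2)^d/z^d$ and $\nu_1\nu_2=\mathrm{const}\cdot(z-a^{-2})^d$ to see the cancellation at $(a^2)^{(2)}$ and the appearance of the pole at $(a^{-2})^{(2)}$; your residue bookkeeping from the divisor data of Lemma~\ref{lem:spectral_factorization_intro} is an equivalent route, and in fact more careful in that you verify the residues at $0$ and $\infty$ are nonzero using the strict inequalities on $(\tau,\xi)$, a point the paper leaves implicit. Part (ii) is the same genus-one count. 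For (iii) the paper proves $\oint_{\mathcal C_1}\Phi'(z)\,dz=0$ directly and then uses that a real continuous function on a circle with zero integral changes sign at least twice; your max/min argument for $\Re\Phi$ on $\mathcal C_1$ is the dual formulation of the same idea.

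One small gap worth tightening: the assertion that $t\mapsto\Re\Phi(\gamma(t))$ is a function on $S^1$, rather than on its universal cover, requires $\Re\Phi$ to be single-valued along $\mathcal C_1$. Your phrase ``$\Phi$ is holomorphic (after a choice of branches) on a neighbourhood of $\mathcal C_1$'' does not by itself rule out monodromy around the cycle. The missing observation is simply that $\Re\log f=\log|f|$ is single-valued, so $\Re\Phi=(1-\tau)\log|\mu|-\tau\log|\nu|+d(1-\tau+\xi)\log|z|-d(1-\tau)\log|z-a^2|$ is globally well-defined on $\mathcal R$ away from the poles; this is exactly what the paper invokes, and once stated your max/min argument goes through cleanly.
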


\begin{figure}[t]
    \begin{center}
    \begin{overpic}[scale=.6]{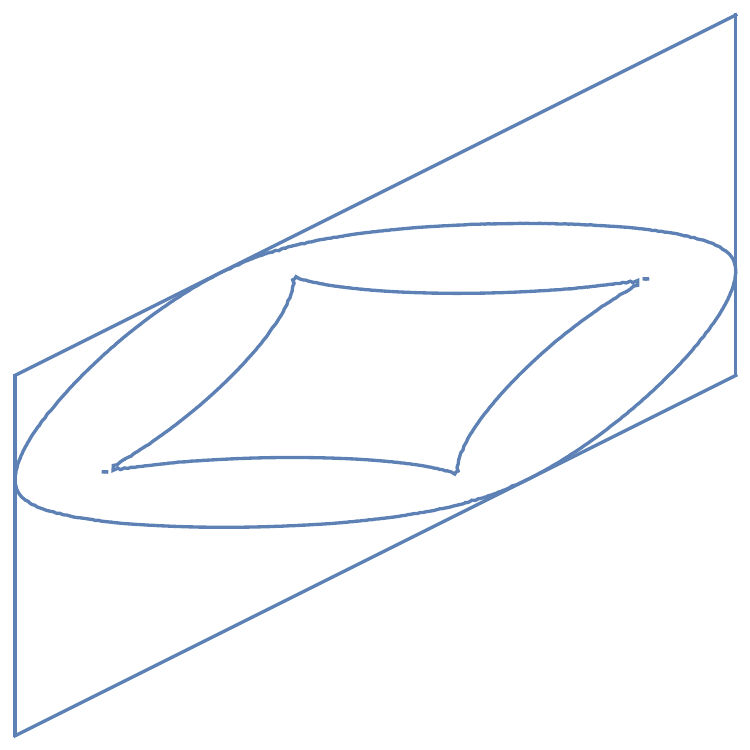}
        \put (45,50) {\textrm{Smooth}}
        \put (60,64) {\textrm{Rough}}
        \put (75,75) {\textrm{Frozen}}
    \end{overpic}
    \begin{overpic}[scale=.4,angle=45]{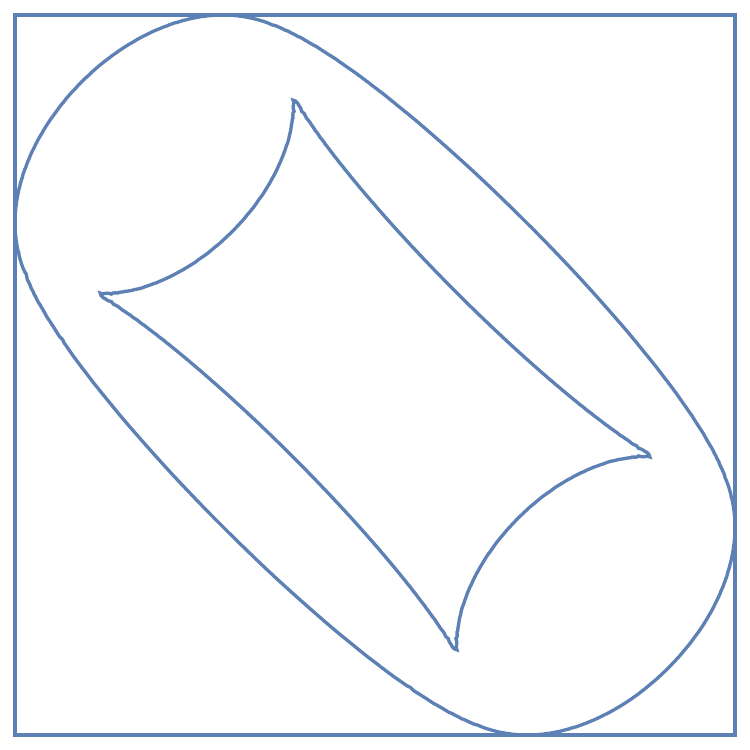}
    \end{overpic}
    \caption{Both pictures represent a partitioning of the region into the frozen region, the rough disordered region and the smooth disordered region. The picture on the left uses the natural coordinates $(\tau, \xi)$ corresponding to the point process associated  with the non-intersecting paths. The picture on the right corresponds to the coordinates for the original dimer model. In both pictures we have $a^2=\alpha/(1+\alpha+\alpha^2)$ and $\alpha=\frac12$.}
    \label{fig:regions}
\end{center}
\end{figure}
There are always two saddle points on the cycle $\mathcal C_1$, but it is the location of the two other saddle points that determines the phase at the point $(\tau,\xi)$.  We say that  $(\tau, \xi)$ is 
\begin{itemize}
    \item  in the \textbf{frozen} region, if we have two distinct saddle points on the cycle $\mathcal C_2$;
    \item in the \textbf{smooth disordered} region, if we have four distinct saddle points on the cycle $\mathcal C_1$;
    \item in the \textbf{rough disordered} region,   if there is a saddle point in the upper half plane of $\mathcal R_1$ or $\mathcal R_2$;
    \item on the boundary between the \textbf{rough and smooth disorderd} regions, when this saddle point from the upper half plane coalesces with its complex conjugate on the cycle $\mathcal C_1$;
    \item on the boundary between the \textbf{rough and frozen} regions, when the saddle point from the upper half plane coalesces with its complex conjugate on the cycle $\mathcal C_2$.
\end{itemize}
We note that the terminology rough, smooth and frozen goes  back to at least \cite{KOS}. In that work  also the alternatives \emph{gaseous} for smooth disordered and \emph{liquid} for rough disordered were mentioned. In the subsequent literature both these terms have been used. We chose to use terminology frozen, rough and smooth disordered. The difference between these regions is in the decay of the local correlations for the local Gibbs measure. In the frozen region, the randomness disappears. In the rough  disordered region,   the correlations between two points decay polynomially in their distance, whereas in the smooth disorder regions these correlations decay exponentially. Our list above suggests that these different behavior can be characterized in terms of the location of the two remaining saddle points. The following theorem justifies this characterization for the smooth disordered (or gaseous) region.

\begin{theorem} \label{thm:gas}
    Let $(\tau,\xi)$ be in the smooth disordered region. Then 
    \begin{multline} \label{eq:gaskernel}
       \lim_{N \to \infty} \left[K_{ d N}((2dT+2m+\eps, 2X+2x-j),(2d T+2m'+\eps',2X+2x'-j')\right]_{j,j'=0}^1\\
    = -\frac{\mathbbm{1}_{2m'+\eps'<2m+\eps}}{2 \pi i} 
    \int_{ \gamma_2^{(2)}} A_e(z)^{-\eps'} F(z) A_o(z)^{\eps} \lambda(z) ^{d(m-m')}\frac{z^{m-x-m'+x'}}{(z-a^2)^{m-m'}} \frac{dz}{z}\\
    +\frac{\mathbbm{1}_{2m'+ \eps'\geq 2m+ \eps}}{2 \pi i} 
    \int_{ \gamma_2^{(1)}} A_e(z)^{-\eps'} F(z) A_o(z)^{\eps} \lambda(z) ^{d(m-m')}\frac{z^{m-x-m'+x'}}{(z-a^2)^{m-m'}} \frac{dz}{z}.
    \end{multline}
\end{theorem}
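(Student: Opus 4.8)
The plan is to run a saddle‑point analysis of the double contour integral in \eqref{eq:correlationkernelmainperiodiceigenvalues}, with the action $\Phi$ from \eqref{eq:defPhi}. First I would replace $T=\lfloor N\tau\rfloor$ and $X=\lfloor dN\xi\rfloor$ by $N\tau$ and $dN\xi$ — this alters the $N$-dependent factors only by quantities bounded uniformly in $N$ on the contours — and rewrite the double integral as
\[
 \frac{1}{(2\pi i)^2}\oint_{\gamma_1}\oint_{\gamma_2^{(1)}\cup\gamma_2^{(2)}} G(z,w)\,e^{N\left(\Phi(w;\tau,\xi)-\Phi(z;\tau,\xi)\right)}\,\frac{dz\,dw}{z-w},
\]
where $G$ collects the $N$-independent matrix prefactor $A_e(w)^{-\eps'}F(w)F(z)A_o(z)^{\eps}$ together with the bounded algebraic factors. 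The single-integral term of \eqref{eq:correlationkernelmainperiodiceigenvalues} is already independent of $N$, so it survives the limit verbatim; the whole content of Theorem \ref{thm:gas} lies in the asymptotics of this double integral.

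Second, I would use Proposition \ref{prop:four_saddles} and the definition of the smooth disordered region: all four zeros of the single-valued differential $\Phi'(z)\,dz$ then lie on the cycle $\mathcal C_1$. I would build global steepest-descent contours on $\mathcal R$, deforming the $w$-contour $\gamma_1$ to a contour $\Gamma_w$ and the $z$-contour $\gamma_2^{(1)}\cup\gamma_2^{(2)}$ to a contour $\Gamma_z$, each passing through saddles on $\mathcal C_1$, arranged so that $\max_{\Gamma_w}\Re\Phi<\min_{\Gamma_z}\Re\Phi$. This separation of heights is exactly what the smooth disordered phase makes possible — it is here that the fact that $\mathcal C_1$ (rather than $\mathcal C_2$ or the interior of a sheet) carries all four saddles is used. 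Then $\Re\left(\Phi(w)-\Phi(z)\right)\le-c<0$ on $\Gamma_w\times\Gamma_z$, the factor $1/(z-w)$ stays bounded since $\Gamma_w$ and $\Gamma_z$ are disjoint, and the contribution of $\Gamma_w\times\Gamma_z$ is $O(e^{-cN})\to0$.

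Third, I would collect the residue produced when, on the sheet $\mathcal R_1$, the $z$-contour sweeps across the $w$-contour in the course of this deformation. At $z=w$ all genuinely $N$-dependent factors cancel — the quotients $\mu(w)^{N-T}/\mu(z)^{N-T}$, $\nu(z)^{T}/\nu(w)^{T}$, $w^{d(N-T)+X}/z^{d(N-T)+X}$ and $(z-a^2)^{d(N-T)}/(w-a^2)^{d(N-T)}$ all become $1$ — while $F(w)F(w)=F(w)$ because $F$ is a projection, and what remains is precisely the integrand of the single-integral term of \eqref{eq:correlationkernelmainperiodiceigenvalues}. That residue is initially picked up over $\gamma_1^{(1)}$, but since its integrand is singular only at points enclosed by both $\gamma_1^{(1)}$ and $\gamma_2^{(1)}$, it may be moved back to $\gamma_2^{(1)}$; with the orientations of \eqref{eq:correlationkernelmainperiodiceigenvalues} it contributes $+\tfrac{1}{2\pi i}\int_{\gamma_2^{(1)}}$ of the single-integral integrand, with no $N$-dependence and no indicator. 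On the sheet $\mathcal R_2$ the analogous crossing does not occur (this is the source of the asymmetry between the two terms of \eqref{eq:gaskernel}), so $\gamma_2^{(2)}$ is untouched. Combining with the $N$-independent single-integral term of \eqref{eq:correlationkernelmainperiodiceigenvalues}, whose $\gamma_2^{(1)}$ part carries the coefficient $-\mathbbm{1}_{2m'+\eps'<2m+\eps}$, yields coefficient $-\mathbbm{1}_{2m'+\eps'<2m+\eps}+1=\mathbbm{1}_{2m'+\eps'\ge 2m+\eps}$ on $\gamma_2^{(1)}$ and $-\mathbbm{1}_{2m'+\eps'<2m+\eps}$ on $\gamma_2^{(2)}$, which is exactly \eqref{eq:gaskernel}, and the leftover double integral tends to $0$.

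The hard part, as always in such arguments, is the deformation rather than the residue bookkeeping: one has to show that in the smooth disordered region the level-set geometry of $\Re\Phi$ on $\mathcal R$ — with four real saddles on $\mathcal C_1$, the poles of $\Phi'(z)\,dz$ at $0$, $(a^2)^{(1)}$, $(1/a^2)^{(2)}$, $\infty$, and the sheet and cut structure of $\mathcal R$ — actually admits contours $\Gamma_w$ and $\Gamma_z$ with $\max_{\Gamma_w}\Re\Phi<\min_{\Gamma_z}\Re\Phi$, and that passing from $\gamma_1,\gamma_2^{(1)}\cup\gamma_2^{(2)}$ to them crosses the diagonal $z=w$ exactly once on $\mathcal R_1$ and not at all on $\mathcal R_2$, without ever crossing the high-order poles of the $w$- and $z$-integrands located at $0$, $(a^2)^{(1)}$, $(1/a^2)^{(2)}$, $\infty$ (here the analyticity of $A_e(w)^{-1}$ at $a^2$ and the zero of $\mu$ at $(a^2)^{(2)}$ from Lemma \ref{lem:spectral_factorization_intro} play a role). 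This is precisely where the characterization of the smooth disordered region enters, and it is the step I expect to require genuine work; the remaining estimates are routine and run parallel to those of \cite{B,DK}.
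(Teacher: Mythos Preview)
Your proposal is correct and follows essentially the paper's route; the only difference is organizational. The paper splits the argument in two: first a \emph{preliminary deformation} in which only the $w$-contour $\gamma_1^{(1)}$ is pushed outward past $\gamma_2^{(1)}$ (and through $\infty$) to a loop $\tilde\gamma_1^{(1)}$ around the cut $(-\infty,x_1]$, picking up the $z=w$ residue on $\mathcal R_1$ once and for all and producing exactly the two single-integral terms of \eqref{eq:gaskernel}; then a separate steepest-descent step shows the remaining double integral is $O(e^{-cN})$ with no further crossings. Your combined deformation would of course land in the same place.

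One point worth sharpening: you list $0,(a^2)^{(1)},(1/a^2)^{(2)},\infty$ as high-order poles to be avoided by both contours, but the situation is more favorable and this is what actually makes the deformation go through. A short lemma in the paper (asymptotics of $\lambda,\mu,\nu$ near $0$ and $\infty$, combined with the constraint $-\tfrac12<\xi-\tau/2<0$ from \eqref{eq:point_zoom_in}) shows that the $w$-integrand has \emph{no} pole at $0$ or $\infty$; this is precisely what licenses pushing the $w$-contour through $\infty$ in the preliminary step and, in the steepest-descent step, placing it on the descent loops that terminate at $0$ and $\infty$. Dually, the $z$-integrand is regular at $(a^2)^{(1)}$ and $(1/a^2)^{(2)}$, so the $z$-contour may follow the ascent loops terminating there. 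Knowing which integrand is regular at which of the four logarithmic singularities of $\Re\Phi$ is exactly what allows the height separation $\max_{\Gamma_w}\Re\Phi<\min_{\Gamma_z}\Re\Phi$ to be realized by admissible contours.
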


Note that from \eqref{eq:gaskernel} we see that the limiting mean density in the smooth disordered region is given by
\begin{multline*}
    \lim_{N \to \infty} \left[K_{ d N}((2dT+2m+\eps, 2X+2x-j),(2d T+2m+\eps,2X+2x-j')\right]_{j,j'=0}^1\\
    =    \frac{1}{2 \pi i}  \int_{ \gamma_2^{(1)}}  A_e(z)^{-\eps} F(z) A_o(z)^{\eps}  \frac{dz}{z},
\end{multline*}
and the right-hand side is independent of $(T,X)$ (as long as it is in the smooth disordered region). 

It is also not difficult to see that the right-hand side of  \eqref{eq:gaskernel} decays exponentially with the distance between $(m,x)$ and $(m',x')$. Indeed, for $m$ and $m'$ fixed, the right-hand side is the $(x-x')$-th Fourier coefficient of a function that is analytic in an annulus. Such coefficients decay exponentially with a rate that is determined by the width of the annulus. More generally, the exponential decay follows from a steepest descent analysis for the right-hand side of \eqref{eq:gaskernel}.

The proof of Theorem \ref{thm:gas} will be given in Section \ref{sec:proofsaddle} and it is based on a saddle point analysis of the integral representation \eqref{eq:correlationkernelmainperiodiceigenvalues}. We are confident that such a saddle point analysis can be carried out similarly for the rough disordered and frozen regions. Since it  requires  non-trivial  effort and since a full asymptotic study is not the main focus of this paper, we do  not perform such an analysis here.

\subsection{The boundary of the rough disordered region} \label{sec:boundary}
We will now show that the boundary of the rough disordered region is an algebraic curve and discuss how this curve can be found explicitly in particular cases.

We start with the following proposition. 
\begin{proposition} \label{prop:phiprime}
    With $\Phi$ as in \eqref{eq:defPhi} and $R(z)=a^2(z-x_1)(z-x_2)/z$  as in \eqref{eq:defR} we have 
    \begin{equation}\label{eq:phiingam}
        \Phi'(z)=d(1-\tau )a^2\frac{z\gamma_1+\gamma_2+  \gamma_3 R(z)^{1/2}}{(z-a^2)z R(z)^{1/2}} -d\tau \frac{\gamma_1+\gamma_2z+  \gamma_3 zR(z)^{1/2}}{(z-a^{-2})z R(z)^{1/2}}+\frac{d(1-\tau+\xi)}{z} -\frac{d(1-\tau)}{z-a^2},
    \end{equation}  
    where $\gamma_1,\gamma_2$ and $\gamma_3$ are real constants determined by  
       \begin{equation} \label{eq:constants_gamma}
        \begin{dcases}
            \gamma_1= -\frac{1}{2}\left( 
                \frac{1}{d}\sum_{j=0}^{d-1} a(\sigma^j(x,y)) \right)^{1/2} \left(\frac{1}{d}\sum_{k=0}^{d-1} \frac{1}{a(\sigma^k(x,y))}
            \right)^{1/2},\\
            \gamma_2+a^2 \gamma_1=-\frac12 (a^2+1)(\alpha+ 1/\alpha),\\
            \gamma_3=\frac{1}{2},
        \end{dcases}
       \end{equation}
       and the square root is taken such that $R(z)^{1/2}$ is meromorphic on $\mathcal R$ and $R(z^{(1)})^{1/2}>0$ for $z>0$. 
    \end{proposition}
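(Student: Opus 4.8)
The goal is to compute $\Phi'(z)$ explicitly. The plan is to differentiate the defining expression \eqref{eq:defPhi} for $\Phi$ term by term and then express $(\log\mu)'(z)$ and $(\log\nu)'(z)$ in terms of the spectral data. The clean way to do this is via the factorization $(\lambda(z))^d = \mu(z)\nu(z)$ from Lemma \ref{lem:spectral_factorization_intro}: differentiating gives
\begin{equation}\label{eq:logderiv_split}
(\log\mu)'(z) + (\log\nu)'(z) = d\,(\log\lambda)'(z),
\end{equation}
and since $\lambda(z)$ is explicitly known from \eqref{eq:lambda_curve}, its logarithmic derivative is easy. So the real task is to find the \emph{difference} $(\log\mu)'(z) - (\log\nu)'(z)$, or equivalently one of the two individually. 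This is where the structure of $P_-^{(d)}$ and $P_+^{(d)}$ enters, and in particular where the constants $\gamma_1,\gamma_2,\gamma_3$ come from.

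First I would record the elementary facts about $\mu$ and $\nu$. From Lemma \ref{lem:spectral_factorization_intro}, $\mu$ has a zero of order $d$ at $(a^2)^{(2)}$ and a pole of order $d$ at $0$, while $\nu$ has a zero of order $d$ at $(a^{-2})^{(2)}$ and a pole of order $d$ at $\infty$; these are meromorphic functions on $\mathcal R$. Thus $d\log\mu$ and $d\log\nu$ (as multivalued functions) have single-valued differentials with only simple poles, located at $0$, $(a^2)^{(2)}$, $(a^{-2})^{(2)}$, $\infty$ with the indicated residues $\pm d$. The key algebraic input is $\det P_-^{(d)}(z) = \mu_1(z)\mu_2(z)$ and $\det P_+^{(d)}(z) = \nu_1(z)\nu_2(z)$: from the explicit products \eqref{eq:pmin}, \eqref{eq:pplus}, the determinant of each elementary factor is computed directly (each $2\times 2$ block has an easy determinant, namely the West/East/South building blocks contribute $a(\sigma^j)\cdot(1-a^2/z)\cdot a(\sigma^j)$ type products for $P_-$ and similarly for $P_+$), so $\det P_-^{(d)}(z)$ and $\det P_+^{(d)}(z)$ are explicit rational functions of $z$ whose coefficients involve the symmetric combinations $\frac1d\sum a(\sigma^j(x,y))$ and $\frac1d\sum 1/a(\sigma^j(x,y))$ — precisely the quantities appearing in $\gamma_1$. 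Meanwhile, on the Riemann surface $\mathcal R$, the product over both sheets $\mu_1\mu_2 = \det P_-^{(d)}$ together with $(\lambda_1\lambda_2)^d = (\det P(z))^d = ((1-a^2z)(1-a^2/z))^d$ pins down the symmetric functions of $\mu_1,\mu_2$.

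The heart of the argument is then: the differential $d\log\mu$ (single-valued on $\mathcal R$) is determined by its poles, residues, and one more piece of data — say its behaviour at the branch points, or equivalently the value of a period. Writing the general ansatz for a meromorphic differential on $\mathcal R$ with simple poles at the four listed points,
$$
(\log\mu)'(z)\,dz = \left(\frac{c_0}{z} + \frac{c_a}{z-a^2} + \frac{c_{1/a}}{z-1/a^2} + (\text{polynomial part at }\infty)\right)dz + (\text{multiple of } R(z)^{-1/2}\,dz\text{-type terms}),
$$
one matches residues using the known zero/pole orders of $\mu$ and $\nu$, and fixes the remaining free constant using the relation between $\det P_-^{(d)}$ and the symmetric functions just computed. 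Adding $(\log\mu)' + (\log\nu)' = d(\log\lambda)'$ and $(\log\mu)' - (\log\nu)'$, then substituting into $\Phi' = (1-\tau)(\log\mu)' - \tau(\log\nu)' + d(1-\tau+\xi)/z - d(1-\tau)/(z-a^2)$, collecting terms over the common denominator $(z-a^2)zR(z)^{1/2}$ for the $(1-\tau)$-part and $(z-a^{-2})zR(z)^{1/2}$ for the $\tau$-part, yields exactly \eqref{eq:phiingam}. The constant $\gamma_3 = 1/2$ comes from the leading $R(z)^{1/2}$ coefficient of $(\log\lambda)'$ via \eqref{eq:lambda_curve}; the relation $\gamma_2 + a^2\gamma_1 = -\frac12(a^2+1)(\alpha+1/\alpha)$ is a normalization forced by requiring the numerator $z\gamma_1+\gamma_2+\gamma_3 R(z)^{1/2}$ to vanish appropriately at the right point (so that the apparent pole at $z=a^2$ or the behaviour at a branch point is consistent with $\mu$ being genuinely meromorphic there).

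The main obstacle I anticipate is the bookkeeping in identifying $\gamma_1$ with the specific symmetric combination $-\frac12(\frac1d\sum a(\sigma^j))^{1/2}(\frac1d\sum 1/a(\sigma^j))^{1/2}$ rather than just ``some symmetric function of the $a(\sigma^j(x,y))$''. This requires actually carrying out the product of determinants in \eqref{eq:pmin} and tracking which elementary symmetric functions of the sequence $a(\sigma^0(x,y)),\dots,a(\sigma^{d-1}(x,y))$ survive — and then recognizing, perhaps using that $\sigma$ acts as a translation on $\mathcal E_-$ so that partial products telescope or average nicely, that only the combination above appears. A secondary subtlety is orientation/branch conventions: one must be careful that $R(z)^{1/2}$ is the branch with $R(z^{(1)})^{1/2}>0$ for $z>0$ so that the signs in \eqref{eq:phiingam} are correct, and that the $\tau$-term picks up the reciprocal point $a^{-2}$ via the $\mu\leftrightarrow\nu$ symmetry $z\mapsto $ (the involution exchanging $0$ and $\infty$, $a^2$ and $a^{-2}$) on $\mathcal R$. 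Everything else is routine partial-fraction algebra.
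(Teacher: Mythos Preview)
Your overall architecture is reasonable and shares its backbone with the paper: write $(\log\mu)'+(\log\nu)'=d(\log\lambda)'$, compute $(\log\lambda)'$ explicitly from \eqref{eq:lambda_curve}, and pin down the remaining freedom with one extra piece of information. The paper does essentially this, though more algebraically: it first shows $\nu(z)=p(z)+q(z)R(z)^{1/2}$ with $p,q$ polynomials of controlled degree, then manipulates $\nu_1'\nu_2$ directly to obtain the form $\frac{\nu_1'}{\nu_1}=d\,\frac{\gamma_1+\gamma_2 z+\gamma_3 z R(z)^{1/2}}{(z-a^{-2})zR(z)^{1/2}}$ (and analogously for $\mu$), after which the comparison with $d(\log\lambda)'$ yields $\gamma_3=\tfrac12$ and the relation $a^2\gamma_1+\gamma_2=-\tfrac12(a^2+1)(\alpha+1/\alpha)$.

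There is, however, a genuine gap in your plan for identifying $\gamma_1$. You propose to read it off from $\det P_-^{(d)}(z)=\mu_1(z)\mu_2(z)$, asserting that its coefficients involve the sums $\sum a(\sigma^j(x,y))$ and $\sum 1/a(\sigma^j(x,y))$. They do not. From \eqref{eq:pmin}, each elementary factor has determinant $b(\sigma^j)^2 a(\sigma^j)^2(1-a^2/z)$, so
\[
\det P_-^{(d)}(z)=\Pi^2\Bigl(1-\tfrac{a^2}{z}\Bigr)^d,\qquad \Pi=\prod_{j=0}^{d-1}a(\sigma^j(x,y))\,b(\sigma^j(x,y)),
\]
which contains only the \emph{product} $\Pi$ and carries no information about the sums appearing in $\gamma_1$. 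The determinant therefore cannot supply the missing constraint. What is actually needed is the \emph{trace}: expanding $P_-^{(d)}(z)$ to first order in $1/z$ one finds
\[
\Tr P_-^{(d)}(z)=\Pi\Bigl(2+\tfrac{a^2}{z}\bigl(\textstyle\sum_j a(\sigma^j)\sum_k a(\sigma^k)^{-1}-d\bigr)+O(z^{-2})\Bigr),
\]
and combining this with $\det P_-^{(d)}$ gives the asymptotics of $\mu_1$ near $\infty$. Matching that against the $z\to\infty$ behaviour of the ansatz for $\mu_1'/\mu_1$ is what fixes $\gamma_1$ to the stated value. Your ``one more piece of data'' intuition is correct (on a genus-one surface the differential is determined up to a holomorphic piece), but the data must come from the trace expansion, not the determinant.
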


    The proof of this proposition will be given in Section \ref{sec:proofphi}.

By inserting the constants \eqref{eq:phiingam} into $\Phi'(z)$, multiplying by $(z-a^2)(z-a^{-2})R(z)^{1/2}$, and re-organizing the equation so that all terms with $R(z)^{1/2}$ are on the right, we see that $\Phi'(z)=0$ can be written as
\begin{multline}  \label{eq:saddlepointequations2}
  (1-\tau)a^2(\gamma_1 z+ \gamma_2)(z-a^{-2})-\tau  (\gamma_1+\gamma_2 z)(z-a^2)\\
  =-R(z)^{1/2}\left((1-\tau) a^2\gamma_3 (z-a^{-2}) -\tau \gamma_3(z-a^2) z\right. \\ \left. +(1-\tau+\xi)(z-a^2)(z-a^{-2})- (1-\tau)z(z-a^{-2})\right).
\end{multline}
Before we proceed,  note  that $z=a^{- 2}$ and  $z=a^{2}$  are two solutions that we just introduced by multiplying by $(z-a^{2})(z-a^{-2})$ and are not saddle points. 

By squaring both sides of \eqref{eq:saddlepointequations2} and multiplying by $z$ we  find a polynomial equation of degree $6$ in $z$ with coefficients that are quadratic functions of $\tau$ and $\xi$. Since $z=a^{\pm 2}$ are solutions that we are not interested in, we are left with an equation of  degree four. There are four solutions to this equation, and each of them corresponds to exactly one point on the surface. This confirms that we indeed have four saddle points, which was part of the statement in Proposition \ref{prop:four_saddles}.

This also allows to write an equation for the rough disordered boundary. Indeed, the coefficients of this fourth degree equation will be quadratic expressions in $\tau$ and $\xi$. We have a third order saddle in case the discriminant vanishes. The discriminant of a polynomial of degree four is a  polynomial in its coefficients of degree six. Thus, the discriminant is a polynomial in $\tau $ and $\xi$ of degree twelve. In the explicit cases that we tried, we found,  with the help of computer software, that this degree twelve curve can be factorized into a curve of degree eight and remaining factors that are not relevant. This also matches with the findings of \cite{CJ} and \cite{BD} for the special case $a=1$. We have, however, only been able to verify that this holds numerically in special cases  (one of them we will discuss in Appendix \ref{sec:order_six}) and do not have a proof that it holds generally. We leave this as an interesting open problem and post the following conjecture:
\begin{conjecture}
    The boundary of the rough disordered region is an algebraic curve in $\tau$ and $\xi$ of degree eight.
\end{conjecture}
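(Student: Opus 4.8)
The plan is to realise the boundary of the rough disordered region as the relevant part of the discriminant locus of the quartic saddle‑point polynomial, and to pin down the spurious degree‑four factor of that discriminant explicitly as a resultant.

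First I would put the saddle equation into polynomial form. Let $G(z;\tau,\xi)$ denote the left‑hand side of \eqref{eq:saddlepointequations2} and $H(z;\tau,\xi)$ the polynomial in square brackets on its right‑hand side, so that $\Phi'(z)=0$ is equivalent to $G(z)=-R(z)^{1/2}H(z)$; both $G$ and $H$ have degree two in $z$, and — as one reads off from \eqref{eq:saddlepointequations2} — their coefficients are affine in $(\tau,\xi)$. Writing $\rho(z):=zR(z)$, which is a polynomial of degree two, squaring gives
\[
 \mathcal M(z;\tau,\xi):=z\,G(z)^2-\rho(z)\,H(z)^2 ,
\]
a polynomial of degree six in $z$ with coefficients quadratic in $(\tau,\xi)$. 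As already recorded after \eqref{eq:saddlepointequations2}, $z=a^{2}$ and $z=a^{-2}$ are roots of $\mathcal M$ for every $(\tau,\xi)$, so $\mathcal M(z)=\mathrm{const}\cdot(z-a^2)(z-a^{-2})\,Q(z;\tau,\xi)$ with $Q$ of degree four in $z$ and coefficients quadratic in $(\tau,\xi)$; since the discriminant of a quartic is homogeneous of degree six in its coefficients, $\operatorname{disc}_z Q$ is a polynomial in $(\tau,\xi)$ of degree twelve, vanishing exactly where $Q$ has a repeated root.

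Next I would identify the spurious factor. The crucial observation is that if $G(\cdot;\tau,\xi)$ and $H(\cdot;\tau,\xi)$ have a common root $z_0$, then $\mathcal M(z_0)=\mathcal M'(z_0)=0$ (every term of each expression carries a factor $G(z_0)$ or $H(z_0)$), so $z_0$ is a double root of $\mathcal M$ and, since $z_0\notin\{a^2,a^{-2}\}$ for generic such $(\tau,\xi)$, a double root of $Q$; hence $\operatorname{disc}_z Q=0$ there. Thus $\{\operatorname{Res}_z(G,H)=0\}\subseteq\{\operatorname{disc}_z Q=0\}$, where $\operatorname{Res}_z(G,H)$ has degree at most four in $(\tau,\xi)$ (in the $4\times4$ Sylvester matrix two rows carry $G$'s coefficients and two carry $H$'s, all affine). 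Checking that $\operatorname{Res}_z(G,H)$ is squarefree — a finite computation, or equivalently verifying that $\operatorname{disc}_z Q$ vanishes only to first order along this curve — then yields
\[
 \operatorname{disc}_z Q(\tau,\xi)=\mathrm{const}\cdot\operatorname{Res}_z(G,H)(\tau,\xi)\cdot P(\tau,\xi),\qquad \deg P = 8 .
\]
The factor $\operatorname{Res}_z(G,H)$ is genuinely spurious: when $G(z_0)=H(z_0)=0$ the two lifts $z_0^{(1)},z_0^{(2)}$ of $z_0$ to $\mathcal R$ are both critical points of $\Phi$, one on each sheet, but for generic parameters they are two distinct simple saddles, so no coalescence occurs and the steepest‑descent topology is unchanged. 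Hence $\{\operatorname{Res}_z(G,H)=0\}$ does not meet the rough boundary outside a finite set, and the boundary is contained in $\{P=0\}$.

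The remaining — and hardest — step is to show that the boundary actually fills out $\{P=0\}$, so that its Zariski closure is a curve of degree exactly eight, which would establish the conjecture. Off the spurious locus, a point of $\{P=0\}$ is one where $Q$ has a genuine third‑order (degenerate) saddle on $\mathcal R$; since a complex‑conjugate pair of non‑real degenerate saddles is a codimension‑two phenomenon, for a generic point of each component of $\{P=0\}$ this degenerate saddle lies on the real locus $\mathcal C_1\cup\mathcal C_2$. One must then argue that such a real degenerate saddle is, generically, of coalescence type — a conjugate pair of complex saddles being absorbed into $\mathcal C_1$ or $\mathcal C_2$ — and not a tangency of the steepest‑descent landscape that carries no phase transition, which would appear as an extraneous real component of $\{P=0\}$. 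Ruling out such extraneous components — equivalently, showing that every irreducible component of $\{P=0\}$ is swept by the inner or the outer boundary of the rough region — is exactly the point that \cite{CJ} and \cite{BD} establish for $a=1$ and that we have only checked numerically in the torsion examples of Section \ref{sec:examplestorsion}; I expect it to be the main obstacle. A structural tool that should help is Abel's theorem on the elliptic curve $\mathcal R$: since $\Phi'(z)\,dz$ has simple poles exactly at the fixed points $0,(a^2)^{(1)},(1/a^2)^{(2)},\infty$ (Proposition \ref{prop:four_saddles}), its four zeros — the saddles — sum, in the group law of $\mathcal R$, to the $(\tau,\xi)$‑independent divisor $0+(a^2)^{(1)}+(1/a^2)^{(2)}$ (with $\infty$ the identity); together with the translation structure underlying the flow $\sigma$, this should constrain how the saddles move and collide strongly enough to handle all torsion orders $d$ uniformly.
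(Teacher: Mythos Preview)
The paper does \emph{not} prove this statement --- it is stated as a conjecture, verified only numerically in the order-six example of Appendix~\ref{sec:order_six} and in the special case $a=1$ from \cite{CJ,BD}. So there is no paper proof to compare against; the question is whether your outline closes the gap.

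Your identification of the spurious locus with $\{\operatorname{Res}_z(G,H)=0\}$ is the right geometric picture, but the degree bookkeeping is off. When $G(z_0)=H(z_0)=0$, the two saddles sitting over $z_0$ are simple points $z_0^{(1)},z_0^{(2)}$ on \emph{different sheets} of $\mathcal R$; as $(\tau,\xi)$ varies they move smoothly on $\mathcal R$, so their projections to $\mathbb C$ cross transversally. Hence $\zeta_1-\zeta_2$ vanishes to first order in the direction transverse to the resultant curve, and since $\operatorname{disc}_z Q$ carries the factor $(\zeta_1-\zeta_2)^2$, it vanishes to \emph{second} order there. (Your own local computation of $\mathcal M$ near $z_0$ gives this: the discriminant of the local quadratic is $4z_0\rho(z_0)(g\epsilon_H-h\epsilon_G)^2$, a perfect square.) Thus $\operatorname{disc}_z Q$ is divisible by $\operatorname{Res}_z(G,H)^2$, not just $\operatorname{Res}_z(G,H)$, and the quotient has degree $12-2\deg\operatorname{Res}_z(G,H)$. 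Your Sylvester bound $\deg\operatorname{Res}\le 4$ then gives only $\deg P\ge 4$; to land on $8$ you would need $\deg\operatorname{Res}_z(G,H)=2$, which you have not established. The order-six example confirms this picture: there the spurious factor is a \emph{squared conic}, consistent with $\operatorname{Res}^2$ of degree $2\times 2=4$, not a degree-four resultant appearing once.

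So the concrete missing step, even before your acknowledged ``hardest step'' of matching components of $\{P=0\}$ to the actual boundary, is to show the degree drop $\deg_{(\tau,\xi)}\operatorname{Res}_z(G,H)=2$. This is not automatic from the Sylvester matrix and would require exploiting the specific structure of $G$ and $H$ in \eqref{eq:saddlepointequations2} (for instance, the fact that the $z^1$-coefficient of $G$ is independent of $(\tau,\xi)$, or the symmetry of the saddle equation under $z\mapsto 1/z$ coupled with $\tau\mapsto 1-\tau$). Without this, your argument does not isolate a degree-eight factor. Your closing remarks about Abel's theorem on $\mathcal R$ are suggestive but not yet an argument, and the proposal remains a sketch rather than a proof of the conjecture.
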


\begin{remark}
    There is another  way of parametrizing the boundary. Indeed, on the two  components of the boundary of the rough region we have  a coalescence of saddle points on the cycles $\mathcal C_1$  or $\mathcal C_2$. This means that we have a double zero of the differential $\Phi'(z)dz$.  This gives a way of parametrizing these curves. Indeed, $\Phi'(z)=\Phi''(z)=0$ for $z \in \mathcal C_1$ or $\mathcal C_2$ gives a linear system of equations for $\mu$ and $\xi$ that can be easily solved.  
\end{remark}

Another interesting consequence of \eqref{eq:phiingam} is that the saddle point equation $\Phi'(z)=0$ only depends  on the order $d$ of the torsion  via the constant $\gamma_1$ in \eqref{eq:constants_gamma}. However, it is even possible to replace this with another expression that does not involve $d$: 

\begin{lemma}\label{lem:relationsgamma}
    The constants $\gamma_1$ and $\gamma_2$ from \eqref{eq:constants_gamma} are related via
    \begin{equation}\label{eq:g1g2}
     \gamma_1  \int_{x_1}^{x_2} \frac{x dx}{(x-a^{-2} )\sqrt{R(x)}}=-\gamma_2 \ {\int_{x_1}^{x_2} \frac{dx}{(x-a^{-2}) \sqrt{R(x)}}},
    \end{equation}
    where $\sqrt{R(x)}>0$ for $x\in (x_1,x_2)$.
\end{lemma}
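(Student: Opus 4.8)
The plan is to identify the quantity $\gamma_2/\gamma_1$ with a ratio of complete elliptic periods by two independent routes: first through the defining formula \eqref{eq:constants_gamma}, which expresses $\gamma_1$ in terms of the averaged functions $a(\sigma^j(x,y))$ and $1/a(\sigma^j(x,y))$ along the orbit, and second through the residue/period structure of $\Phi'(z)\,dz$ on $\mathcal R$. The middle relation in \eqref{eq:constants_gamma}, $\gamma_2 + a^2\gamma_1 = -\tfrac12(a^2+1)(\alpha+1/\alpha)$, already gives $\gamma_2$ in terms of $\gamma_1$ and the fixed parameters, so \eqref{eq:g1g2} is equivalent to a single scalar identity relating $\gamma_1$ to the two integrals $\int_{x_1}^{x_2} \frac{dx}{(x-a^{-2})\sqrt{R(x)}}$ and $\int_{x_1}^{x_2}\frac{x\,dx}{(x-a^{-2})\sqrt{R(x)}}$; equivalently it says that the meromorphic differential $\frac{(\gamma_1 x + \gamma_2)\,dx}{(x-a^{-2})\sqrt{R(x)}}$ has vanishing $\mathcal C_1$-period. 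So the real content is: \emph{the differential} $\omega := \dfrac{(\gamma_1 z+\gamma_2)\,dz}{(z-a^{-2})\,R(z)^{1/2}}$ \emph{has zero period around} $\mathcal C_1$.

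First I would recall from Lemma \ref{lem:spectral_factorization_intro} that $\nu(z)$ is a well-defined meromorphic function on $\mathcal R$ with a zero of order $d$ at $(a^{-2})^{(2)}$ and a pole of order $d$ at $\infty$, built from the explicit product \eqref{eq:pplus} (and similarly $\mu$ from \eqref{eq:pmin}). The logarithmic derivative $\frac{d}{dz}\log\nu(z) = \nu'(z)/\nu(z)$ is therefore a single-valued meromorphic differential on $\mathcal R$ whose only singularities are simple poles — at the zeros and poles of $\nu$ — with integer residues, hence \emph{all of its periods are integer multiples of $2\pi i$}. The key computation is to extract the part of $\Phi'(z)\,dz$ coming from $d\log\mu$ and $d\log\nu$ and compare with \eqref{eq:phiingam}: the terms in \eqref{eq:phiingam} carrying $R(z)^{1/2}$ in the numerator, namely $\gamma_3 R(z)^{1/2}\big[\frac{d(1-\tau)a^2}{(z-a^2)z R^{1/2}} - \frac{d\tau \, z}{(z-a^{-2})z R^{1/2}}\big]$, are the even-under-sheet-swap pieces and come from the $\log z$, $\log(z-a^2)$ contributions; the genuinely two-valued (sheet-odd) part of $d\log\mu$ and $d\log\nu$ is exactly what produces the $\gamma_1, \gamma_2$ terms $\frac{(z\gamma_1+\gamma_2)\,dz}{(z-a^2)zR^{1/2}}$ and $\frac{(\gamma_1+\gamma_2 z)\,dz}{(z-a^{-2})zR^{1/2}}$. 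Concretely I would compute $\tfrac12\big(d\log\nu(z^{(1)}) - d\log\nu(z^{(2)})\big)$, the "odd part", which is a genuine meromorphic differential on $\mathcal R$ anti-invariant under the hyperelliptic involution, show it equals (up to the explicit $\gamma_3$ piece) a multiple of $\frac{(\gamma_1 + \gamma_2 z)\,dz}{(z-a^{-2})z R(z)^{1/2}}$, and use that its $\mathcal C_1$-period equals $\tfrac12(\oint_{\mathcal C_1} d\log\nu^{(1)} - \oint_{\mathcal C_1}d\log\nu^{(2)})$; since $\mathcal C_1$ encircles no zeros or poles of $\nu$ on either sheet (the zero of $\nu$ sits at $(a^{-2})^{(2)}$, which is outside $\mathcal C_1$; cf. Figure \ref{fig:rieman}), both periods vanish. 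The same argument applied to $\mu$ handles the companion $\frac{(z\gamma_1+\gamma_2)}{(z-a^2)z}$ term, and adding the contributions with the weights $d(1-\tau), d\tau, d(1-\tau+\xi), d(1-\tau)$ from \eqref{eq:defPhi} produces a single net identity; setting $\tau=1$ (or isolating the $\tau$-coefficient) kills everything except the $\gamma_1+\gamma_2 z$ term and forces $\oint_{\mathcal C_1}\frac{(\gamma_1+\gamma_2 z)\,dz}{(z-a^{-2})zR(z)^{1/2}} = 0$. Reducing the $\mathcal C_1$-period integral of $\frac{dz}{z R(z)^{1/2}}$: on $\mathcal C_1$ the two sheets contribute with opposite signs of $R^{1/2}$ and the $1/z$ is sheet-even, so $\oint_{\mathcal C_1}\frac{dz}{zR(z)^{1/2}} = 2\int_{x_1}^{x_2}\frac{dz}{z R(z)^{1/2}}$ up to sign — but the pole at $z=0$ lies outside $[x_1,x_2]$, and in fact one can drop the $1/z$ altogether because $\frac{1}{z(z-a^{-2})} = \frac{a^2}{z} - \frac{a^2}{z-a^{-2}}$ and the $\frac{a^2}{z}$ piece integrates, on $\mathcal C_1$, against the odd differential $\frac{dz}{R^{1/2}}$ times a constant; tracking the partial fractions carefully collapses the statement to \eqref{eq:g1g2}.

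The main obstacle I anticipate is the bookkeeping in the second step: correctly isolating the sheet-odd part of $d\log\mu$ and $d\log\nu$ and matching it, term by term, against \eqref{eq:phiingam} with the right rational prefactors — in particular verifying that the "spurious" poles at $z=a^{\pm2}$ that were introduced in passing from $\Phi'$ to \eqref{eq:saddlepointequations2} do not contribute spurious $\mathcal C_1$-periods, and that the $\gamma_3 = \tfrac12$ piece (the part with $R^{1/2}$ genuinely in the numerator) is sheet-\emph{even} and hence period-free around $\mathcal C_1$ for the trivial reason that $R^{1/2}\,dz/(zR^{1/2}) = dz/z$ has no residue inside $[x_1,x_2]$. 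An alternative, cleaner route that avoids some of this — and which I would fall back on if the direct comparison gets unwieldy — is to use the explicit product formulas \eqref{eq:pplus}, \eqref{eq:pmin} for $P_\pm^{(d)}$ directly: their determinants $\det P_\pm^{(d)}(z)$ are explicit rational functions whose logarithmic derivatives are manifestly single-valued on $\mathbb C$, and the trace relations $\mu_1\mu_2 = \det P_-^{(d)}$, $\nu_1\nu_2 = \det P_+^{(d)}$ together with $\mu_1/\mu_2$ being expressible via $R^{1/2}$ give the odd parts explicitly in terms of the orbit averages $\frac1d\sum_j a(\sigma^j(x,y))$ and $\frac1d\sum_j 1/a(\sigma^j(x,y))$, which are precisely the combinations appearing in $\gamma_1$; the identity \eqref{eq:g1g2} then drops out of the requirement that $\log(\mu_1/\mu_2)$ and $\log(\nu_1/\nu_2)$ — being logarithms of single-valued functions on the two-sheeted cover — have no nontrivial monodromy around $\mathcal C_1$.
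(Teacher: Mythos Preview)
Your core idea is right: the statement is equivalent to the vanishing of the $\mathcal C_1$-period of a differential built from $\gamma_1,\gamma_2$, and this vanishing ultimately comes from the fact that $d\log\nu$ (and $d\log\mu$) have zero $\mathcal C_1$-period. But your route to this is far more elaborate than needed, and the paper's proof is essentially five lines.

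The paper simply quotes the cycle condition $\oint_{\mathcal C_1}\Phi'(z)\,dz=0$, which was \emph{already established} in the proof of Proposition~\ref{prop:four_saddles} (by exactly the reality argument your fallback plan gestures at: $\Phi'$ is real on $\mathcal C_1$ so the imaginary part of the period vanishes, and $\Re\Phi$ is single-valued so the real part vanishes too). Plugging in the explicit formula \eqref{eq:phiingam}, the sheet-even terms (those with $R^{1/2}$ in the numerator and the rational last two terms) cancel on $\mathcal C_1$, leaving
\[
(1-\tau)\,a^2\!\int_{x_1}^{x_2}\frac{x\gamma_1+\gamma_2}{(x-a^2)x\sqrt{R(x)}}\,dx\;-\;\tau\!\int_{x_1}^{x_2}\frac{\gamma_1+\gamma_2 x}{(x-a^{-2})x\sqrt{R(x)}}\,dx=0.
\]
The decisive observation is the symmetry $R(1/x)=R(x)$ (and $x_1x_2=1$): the change of variable $x\mapsto 1/x$ turns the first integral into the negative of the second. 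Hence the left side becomes $-\bigl((1-\tau)+\tau\bigr)$ times the second integral, which must therefore vanish for every~$\tau$. That gives the statement.

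Two remarks on your write-up. First, the step ``since $\mathcal C_1$ encircles no zeros or poles of $\nu$ on either sheet, both periods vanish'' is not a valid deduction: $\mathcal C_1$ is not contractible, so vanishing of residues inside does not force vanishing of the period. The correct reason the period is zero is reality: $\nu$ is real and nonvanishing on $\mathcal C_1$, so $\oint_{\mathcal C_1}d\log\nu$ is real; since it also lies in $2\pi i\mathbb Z$, it must be~$0$. Second, the ``setting $\tau=1$'' manoeuvre is unnecessary once you use the $x\mapsto 1/x$ symmetry, which is really the one idea your plan is missing; without it you end up doing separate (and redundant) computations for $\mu$ and $\nu$ and then struggling with partial fractions to reconcile the factor of $1/x$.
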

The proof of this lemma will ve given in Section \ref{sec:proofgamma}.

By replacing the equation for $\gamma_1$ in \eqref{eq:constants_gamma} by \eqref{eq:g1g2} we see that we have eliminated the dependence on $d$ from the saddle point equation, and the saddle point equation makes sense for general parameters $a$ and $\alpha$. Although the arguments that we provide in this paper use the torsion at several places, it is natural to conjecture that the saddle point analysis and its consequences  can be extended in this way. In particular, we conjecture the characterization of the different phases in Section \ref{sec:char} and Theorem \ref{thm:gas}  to hold under this extension.  We leave this as an open problem. 

\subsection{Overview of the rest of the paper and the proofs}

In the remaining part of this paper we will prove the main results. In Section \ref{sec:flow} we will show that the linear flow on the elliptic curve can be used to find a Wiener-Hopf factorization in Proposition \ref{prop:BD}. We will do this in a more general setup than only for the biased Aztec diamond. In Section \ref{sec:proofs} we will return to the biased Aztec diamond and prove Theorem \ref{thm:main_result} in Section \ref{sec:proofflow}, which is by then just an identification of the parameters in the discussion of Section \ref{sec:flow}. Then Lemma \ref{lem:spectral_factorization_intro} and Theorem \ref{thm:correlationkernel_spectralcurve} are proved in Sections \ref{sec:prooflemmasp} and \ref{sec:prooftheoremp}, respectively. The saddle point analysis starts with proving Lemma \ref{prop:four_saddles} in Section \ref{sec:prooffoursaddle}. After that, we perform a saddle point analysis in  Section \ref{sec:proofsaddle} and prove Theorem \ref{thm:gas}. Proposition \ref{prop:phiprime} is proved in Section \ref{sec:proofphi} and Lemma \ref{lem:relationsgamma} in Section \ref{sec:proofgamma}. In Appendix \ref{sec:order_six}  we work out the example where $(a^{-2},a^{-2})$ is a torsion point of order six. We compute the boundary of the rough disordered region, and we provide numerical results supporting the saddle point analysis of Section \ref{sec:proofsaddle}. Finally, in Appendix \ref{sec:divisionpolynomials} we will show how the notion of division polynomials can be used to find algebraic relations between $a$ and $\alpha$ so that $(a^{-2},a^{-2})$ is a torsion point of order $d$.

\section{The flow} \label{sec:flow}
In this section we introduce a flow on a space of matrices that will give a Wiener-Hopf factorization in the correlation kernel. We prove that this flow is equivalent to a linear flow on an elliptic curve using  translations by a fixed point on that curve. 
\subsection{The space}

First we have to define the space of matrices that we work on. To this end, we first introduce
\begin{equation} \label{eq:defCalS}
		\mathcal S=\left \{
				\begin{pmatrix}
					a_{11} & a_{12} + b_{12} z\\
					a_{21} +b_{21}/z  & a_{22 }
				\end{pmatrix}
				\mid  a_{11},a_{22},a_{12},a_{21}, b_{12}, b_{21} >0
			\right	\}.
\end{equation}
Clearly, the determinant $\det P(z)$ of any $P \in \mathcal S$ is a rational function in $z$ with poles at $z=0$ and $z= \infty$ and no other. Also, $\det P(z)$ will have two zeros $z_1$ and $z_2$, and we will assume that $$0<z_1<1<z_2.$$ 
Then the winding number of $\det P(z)$ with respect to the unit circle equals zero. 

The flow that we will define on $\mathcal S$ will be such that $\det P(z)$ and $ \Tr P(z)$ will be invariant under it. We therefore introduce the sets 
$$
\mathcal S(z_1,z_2,c_1,c_2)
= 
\left \{ P(z) \in \mathcal S 
\mid  \  \Tr P(z)=2 c_1, \quad \det P(z)= -c_2(z-z_1)(z-z_2)/z 
\right\}
$$
for $c_1,c_2>0$ and $0< z_1<1<z_2.$ 

Naturally, $c_1,c_2$ and $z_1,z_2$ be expressed in terms of $a_{ij}$ and $b_{ij}$. Indeed, 
\begin{equation} \label{eq:equationsforc1c2}
    \begin{dcases}
    c_1= \frac{a_{11}+a_{22}}{2},\\
    c_2= a_{21} b_{12},
    \end{dcases}
\end{equation}
and $z_1,z_2$ are the solutions to $\det P(z)=0$. Equivalently, $z_1$ and $z_2$ can be obtained from the following equations:
\begin{equation} \label{eq:equationsforz1z2}
    \begin{dcases}
        z_1z_2=\frac{a_{12}b_{21}}{a_{21}b_{12}},\\
        c_2(z_1+z_2)=a_{11}a_{22}-(a_{21}a_{12}+b_{12}b_{21}),
    \end{dcases}
\end{equation} 
which, combined with the condition $0<z_1<1<z_2$, determine $z_1$ and $z_2$ uniquely. We also note that the condition $0<z_1<1<z_2$ is equivalent to requiring $\det P(1)>0$, because $\det P(z)\to -\infty$ for $z\downarrow 0$ and $z  \to + \infty$. In terms of $a_{ij}$ and $b_{ij}$ this means that the condition is equivalent to 
$$a_{11}a_{22}> (a_{12}+b_{12})(a_{21}+b_{21}).$$
Note that this also shows that right-hand side of the second equation in \eqref{eq:equationsforz1z2} is positive, as it should be. 

It should also be noted that $c_1,c_2,z_1$ and $z_2$ cannot take arbitrary values. For instance, we have the following result. 
\begin{lemma} \label{lem:relationc1c2z1z2}
    We have 
        \begin{equation} \label{eq:relationc1c2z1z2}
            c_1^2\geq c_2 (\sqrt {z_1}+\sqrt{z_2})^2.
        \end{equation}
\end{lemma}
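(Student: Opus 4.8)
The plan is to extract the inequality by comparing two quantities attached to $P(z) \in \mathcal S(z_1,z_2,c_1,c_2)$: the trace $2c_1$ and a quantity built from the determinant. The cleanest route is to evaluate things at a well-chosen point on the unit circle, or rather at a real point where $\det P(z)$ is most negative. First I would note that, since $\det P(z) = -c_2(z-z_1)(z-z_2)/z$ with $0 < z_1 < 1 < z_2$, the function $\det P(z)$ restricted to $z > 0$ attains its minimum (most negative value) at $z = \sqrt{z_1 z_2}$, where it equals $-c_2(\sqrt{z_2}-\sqrt{z_1})^2/\sqrt{z_1 z_2}$; but more useful is the value of $-\det P(z)$ together with a lower bound on $\det P(z)$ at a point where I can also control the trace. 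Actually the right target is the eigenvalues of $P(z)$: they satisfy $\lambda_1\lambda_2 = \det P(z)$ and $\lambda_1 + \lambda_2 = \Tr P(z) = 2c_1$.

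The key step is this. Consider $z = \sqrt{z_1 z_2}$ (the geometric mean, which is $>0$). At this point, using the first equation in \eqref{eq:equationsforz1z2}, namely $z_1 z_2 = a_{12}b_{21}/(a_{21}b_{12})$, the off-diagonal entries of $P(z)$ become $a_{12} + b_{12}\sqrt{z_1z_2}$ and $a_{21} + b_{21}/\sqrt{z_1 z_2}$, and their product is
$$
\left(a_{12} + b_{12}\sqrt{z_1z_2}\right)\left(a_{21} + \frac{b_{21}}{\sqrt{z_1 z_2}}\right) = a_{12}a_{21} + b_{12}b_{21} + a_{12}b_{21}/\sqrt{z_1z_2} \cdot (\text{swap}) \ldots
$$
— here one checks by AM–GM-type manipulation that it equals $\left(\sqrt{a_{12}b_{21}} + \sqrt{a_{21}b_{12}}\right)^2$ when $z_1 z_2 = a_{12}b_{21}/(a_{21}b_{12})$; in any case it equals $a_{12}a_{21}+b_{12}b_{21} + 2\sqrt{a_{12}a_{21}b_{12}b_{21}}$. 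Then at $z = \sqrt{z_1z_2}$ the matrix $P(z)$ has all positive entries, so its spectral radius is at least the average of its two row sums, and in fact — since $\Tr P(z) = 2c_1$ is fixed while $\det P(z) = -c_2(\sqrt{z_2}-\sqrt{z_1})^2/\sqrt{z_1 z_2}$ — the two real eigenvalues $\lambda_\pm = c_1 \pm \sqrt{c_1^2 - \det P(z)}$ satisfy $\lambda_+ - \lambda_- = 2\sqrt{c_1^2 + c_2(\sqrt{z_2}-\sqrt{z_1})^2/\sqrt{z_1 z_2}}$. The inequality I want, $c_1^2 \ge c_2(\sqrt{z_1}+\sqrt{z_2})$, should drop out by bounding $\lambda_+$ (the Perron eigenvalue of the positive matrix $P(\sqrt{z_1 z_2})$) below by a product of off-diagonal contributions: concretely, $\lambda_+ \ge \sqrt{(a_{12}+b_{12}\sqrt{z_1z_2})(a_{21}+b_{21}/\sqrt{z_1z_2})}$ is not quite it, so instead I would use $\lambda_+\lambda_- = \det P < 0$ together with $\lambda_+ + \lambda_- = 2c_1 > 0$ to get $\lambda_+ \ge 2c_1$ is false; rather $\lambda_+ > 2c_1$ is false too. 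The honest version: $\lambda_+ = c_1 + \sqrt{c_1^2 - \det P}$ and I must instead show $c_1 \ge \sqrt{-\det P(z_*)}\cdot(\text{something})$ at the right point.

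Let me state the plan more carefully. I would evaluate $\det P(z)$ and $\Tr P(z)$ at $z=1$: there $\Tr P(1) = a_{11}+a_{22} = 2c_1$ and $\det P(1) = a_{11}a_{22} - (a_{12}+b_{12})(a_{21}+b_{21}) = -c_2(1-z_1)(z_2-1) < 0$. Since $P(1)$ is a positive matrix with trace $2c_1$ and negative determinant, its Perron eigenvalue $\rho(P(1))$ satisfies $\rho(P(1)) < 2c_1$ (as the other eigenvalue is negative, being the quotient of a negative determinant by the positive $\rho$), and also $\rho(P(1)) \ge \max(a_{11}, a_{22})$ and $\rho(P(1)) \ge \sqrt{(a_{12}+b_{12})(a_{21}+b_{21})}$ by standard Perron–Frobenius lower bounds. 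Combining, $\sqrt{(a_{12}+b_{12})(a_{21}+b_{21})} < 2c_1$... but this does not yet produce $\sqrt{z_1}+\sqrt{z_2}$. The actual mechanism must use the geometric-mean point: at $z=\sqrt{z_1 z_2}$, $\det P(\sqrt{z_1z_2}) = -c_2(\sqrt{z_2}-\sqrt{z_1})^2/\sqrt{z_1z_2} = -c_2(\sqrt{z_1}+\sqrt{z_2})^2/\sqrt{z_1z_2} + 4c_2$, hmm; more to the point $c_1^2 - \det P(\sqrt{z_1 z_2}) = c_1^2 + c_2(\sqrt{z_2}-\sqrt{z_1})^2/\sqrt{z_1 z_2} = \frac14(\lambda_+-\lambda_-)^2 = \frac14(\Tr P - 2\lambda_-)^2$, and bounding $\lambda_-$ (which is negative) from below using positivity of entries gives the result. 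So: the key computational identity is that $c_1^2 - \det P(\sqrt{z_1z_2})$ equals a perfect square related to the off-diagonal product, and then $c_1^2 \ge c_2(\sqrt{z_1}+\sqrt{z_2})$ follows from $\sqrt{z_1z_2} \le \frac12(z_1+z_2)$ or a direct AM–GM estimate.

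The main obstacle I anticipate is pinning down exactly which evaluation point and which Perron–Frobenius (or direct discriminant) estimate yields precisely the clean bound $c_1^2 \ge c_2(\sqrt{z_1}+\sqrt{z_2})$ rather than a weaker or differently-shaped inequality — the positivity of all six parameters $a_{ij}, b_{ij}$ must be used in an essential way (the inequality is false without it), and the tightest case should be when $P(z)$ is, say, a rank-deficient-plus-scalar perturbation, i.e. when equality holds. I would therefore also check the equality case $c_1^2 = c_2(\sqrt{z_1}+\sqrt{z_2})$ to calibrate the estimate: this likely corresponds to $a_{12} = a_{21}\sqrt{z_1 z_2}$-type relations forcing the off-diagonal part to be a symmetric rank-one object, and tracing that back should reveal the correct one-line argument. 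Once the right evaluation point and estimate are identified, the remainder is a routine AM–GM computation with the explicit formulas \eqref{eq:equationsforc1c2}–\eqref{eq:equationsforz1z2}.
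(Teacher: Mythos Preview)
Your proposal does not converge to a proof: it is a sequence of exploratory ideas (Perron--Frobenius, eigenvalue bounds, evaluation at $z=\sqrt{z_1z_2}$, evaluation at $z=1$) that you yourself recognize as not landing on the target. Two concrete issues. First, you are chasing the inequality as literally stated, $c_1^2\ge c_2(\sqrt{z_1}+\sqrt{z_2})$, but this is a typo in the paper: the intended (and proved) inequality is $c_1^2\ge c_2(\sqrt{z_1}+\sqrt{z_2})^2$, as confirmed by \eqref{ineq:ellipticcurve} and by the paper's own proof. Second, your claim that $\det P(z)$ is most negative at $z=\sqrt{z_1z_2}$ is false: between $z_1$ and $z_2$ the determinant is \emph{positive}, and $z=\sqrt{z_1z_2}$ is its maximum on $(0,\infty)$, with value $c_2(\sqrt{z_2}-\sqrt{z_1})^2$ --- the wrong sign inside the square for what you need.

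The paper's argument is a three-line direct substitution: insert \eqref{eq:equationsforc1c2} and \eqref{eq:equationsforz1z2} into $c_1^2-c_2(\sqrt{z_1}+\sqrt{z_2})^2 = c_1^2 - c_2(z_1+z_2) - 2c_2\sqrt{z_1z_2}$ and observe that the result is the sum of two squares,
\[
\frac{(a_{11}-a_{22})^2}{4}+\bigl(\sqrt{a_{12}a_{21}}-\sqrt{b_{12}b_{21}}\bigr)^2\ \ge\ 0.
\]
Your spectral intuition can be salvaged and in fact reproduces exactly this computation: evaluate at $z=-\sqrt{z_1z_2}$ (not $+\sqrt{z_1z_2}$), where $\det P(-\sqrt{z_1z_2})=c_2(\sqrt{z_1}+\sqrt{z_2})^2$, and note that the discriminant $(\Tr P)^2-4\det P$ of the characteristic polynomial is nonnegative because the product of the off-diagonal entries at that point is $(\sqrt{a_{12}a_{21}}-\sqrt{b_{12}b_{21}})^2\ge 0$. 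That is the same sum-of-squares identity, just arrived at with an extra layer of interpretation.
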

\begin{proof}
    The proof follows after inserting \eqref{eq:equationsforc1c2} and \eqref{eq:equationsforz1z2} into 
    \begin{align*}
        c_1^2-c_2 (\sqrt {z_1}+\sqrt{z_2})^2&=c_1^2-c_2 (z_1+z_2+2 \sqrt{z_1 z_2})\\
        &=\frac{(a_{11}+a_{22})^2}{4}-a_{11}a_{22}+a_{12}a_{21}+b_{12}b_{21}-2\sqrt{a_{12}a_{21}b_{12}b_{21}}\\
        &=\frac{(a_{11}-a_{22})^2}{4}+ (\sqrt{a_{12}a_{21}}-\sqrt{b_{12}b_{21}})^2\geq 0,
    \end{align*}
   giving  the result.
\end{proof}
As we will see later, the inequality \eqref{eq:relationc1c2z1z2} is sufficient to ensure that $\mathcal S(z_1,z_2,c_1,c_2) \neq \emptyset$. We will give an explicit parametrization of $\mathcal S(z_1,z_2,c_1,c_2)$ in terms of part of an elliptic curve. But first, let us  define a flow on $\mathcal S(z_1,z_2,c_1,c_2)$.

\subsection{Definition of the flow}

We will be interested in factorization of the matrices in $\mathcal S$ of a particular form. Start by introducing the sets 
	$$
		\mathcal S_- =\left  \{ 
			\begin{pmatrix}
				a & 0 \\
				0& 1
			\end{pmatrix}
			\begin{pmatrix}
			1 & 1 \\
			\frac{z_1}{z} & 1 
			\end{pmatrix}
				\begin{pmatrix}
			b & 0 \\
			0& 1 
			\end{pmatrix}
			\mid a>0, b>0, 0<z_1<1 \
			\right\},
	$$
and 
	$$
		\mathcal S_+ = \left\{ 
			\begin{pmatrix}
				1 & 0 \\
				0& c 
			\end{pmatrix}
			\begin{pmatrix}
				1 & \frac{z}{z_2} \\
				1 & 1 
			\end{pmatrix}
			\begin{pmatrix}
				1 & 0 \\
				0& d 
			\end{pmatrix}
			\mid c> 0, d>0, z_2 >1
		 \right \}.
	$$
It is straightforward to verify that if $Q_+ \in \mathcal S_+$ and $Q_-\in \mathcal S_-$ then $Q_+ Q_- \in \mathcal S$ and $Q_-Q_+\in \mathcal S$.
\begin{proposition} \label{prop:uniquefact}
		Let $P \in \mathcal S(z_1,z_2,c_1,c_2)$. Then there exist unique $Q_\pm\in \mathcal S_{\pm } $ such that 
			$
				P=Q_-Q_+.
			$ 
\end{proposition}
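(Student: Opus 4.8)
The plan is to reduce the statement to matching the entries of an explicit $2\times 2$ identity. A general element of $\mathcal S_-$ multiplies out to
\[
Q_-=\begin{pmatrix} ab & a \\ b z_1/z & 1\end{pmatrix},\qquad a,b>0,
\]
and a general element of $\mathcal S_+$ to
\[
Q_+=\begin{pmatrix} 1 & d z/z_2 \\ c & cd\end{pmatrix},\qquad c,d>0,
\]
so that
\[
Q_-Q_+=\begin{pmatrix} a(b+c) & acd+\tfrac{abd}{z_2}\,z \\ c+\tfrac{b z_1}{z} & cd+\tfrac{bdz_1}{z_2}\end{pmatrix}.
\]
Writing $P=\left(\begin{smallmatrix} a_{11} & a_{12}+b_{12}z \\ a_{21}+b_{21}/z & a_{22}\end{smallmatrix}\right)\in\mathcal S(z_1,z_2,c_1,c_2)$, the $(2,1)$ entry, then the $(1,1)$ entry, then the constant part of the $(1,2)$ entry force, in this order,
\[
c=a_{21},\qquad b=\frac{b_{21}}{z_1},\qquad a=\frac{a_{11}}{b+c},\qquad d=\frac{a_{12}}{ac}.
\]
These four numbers are strictly positive because all entries of $P$ are positive and $z_1>0$. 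This already yields \emph{uniqueness}: any factorization $P=Q_-Q_+$ with $Q_\pm\in\mathcal S_\pm$ must have exactly these parameters.

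For \emph{existence} I would take $Q_\pm\in\mathcal S_\pm$ defined by the formulas above with these values of $a,b,c,d$ (legitimate since $0<z_1<1<z_2$) and check that the two remaining matchings hold automatically. The coefficient of $z$ in the $(1,2)$ entry requires $b_{12}=abd/z_2$; since $abd=a_{12}b_{21}/(a_{21}z_1)$, this is equivalent to $a_{12}b_{21}/(a_{21}z_1z_2)=b_{12}$, which follows from $a_{21}b_{12}=c_2$ and $a_{12}b_{21}=c_2z_1z_2$ — precisely the $z^{1}$- and $z^{-1}$-coefficients of $\det P=-c_2(z-z_1)(z-z_2)/z$, cf. \eqref{eq:equationsforc1c2}--\eqref{eq:equationsforz1z2}. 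For the $(2,2)$ entry I would avoid a direct expansion and instead compare determinants: $\det Q_-=ab(z-z_1)/z$ and $\det Q_+=-cd(z-z_2)/z_2$, hence $\det(Q_-Q_+)=-\tfrac{abcd}{z_2}\cdot\tfrac{(z-z_1)(z-z_2)}{z}$, and the same identities give $abcd=a_{12}b_{21}/z_1=c_2z_2$, so $\det(Q_-Q_+)=-c_2(z-z_1)(z-z_2)/z=\det P$. Since $Q_-Q_+$ and $P$ already agree in the $(1,1)$, $(1,2)$ and $(2,1)$ entries, the difference of the two determinants equals $a_{11}\bigl((Q_-Q_+)_{22}-a_{22}\bigr)$; as $a_{11}>0$ this forces $(Q_-Q_+)_{22}=a_{22}$, and therefore $Q_-Q_+=P$.

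All computations involved are elementary; the only subtlety — and the sole place where the hypothesis that $\det P$ has the prescribed form actually enters — is the bookkeeping with the $z$-coefficients of $\det P$, which is exactly what makes the two leftover entries match up. (Note that the trace constraint $\Tr P=2c_1$ is not used in this argument; only the determinant data is needed, which is consistent with $\mathcal S(z_1,z_2,c_1,c_2)$ being essentially one-dimensional by the parametrization via the elliptic curve.) I would record the explicit formulas for $a,b,c,d$ in the final write-up, since they are the natural coordinates in which to iterate the factorization and compare it with the flow on the elliptic curve in the next section.
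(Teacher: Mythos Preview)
Your argument is essentially the same route as the paper's---write out $Q_-Q_+$ explicitly and match coefficients---and your formulas for $a,b,c,d$ agree with \eqref{eq:abcd_intermsof_aijbij}. The determinant trick for the $(2,2)$ entry is a clean alternative to the paper's bare assertion that ``finding the remaining equation gives'' the answer.

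There is, however, one genuine gap in your uniqueness claim. In the definitions of $\mathcal S_-$ and $\mathcal S_+$ the symbols $z_1$ and $z_2$ are \emph{free parameters} ranging over $(0,1)$ and $(1,\infty)$, not a priori the zeros of $\det P$. Your four equations from the $(2,1)$, $(1,1)$, and constant $(1,2)$ entries only yield $c=a_{21}$, $bz_1=b_{21}$, $a(b+c)=a_{11}$, $acd=a_{12}$; this pins down $c$ and the product $bz_1$, but not $b$ and the $Q_-$-parameter $z_1$ separately, and it says nothing at all about the $Q_+$-parameter $z_2$. So ``any factorization must have exactly these parameters'' is not yet justified. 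The missing sentence---which the paper does include---is that $\det Q_-$ vanishes only at its parameter $z_1$ and $\det Q_+$ only at its parameter $z_2$, so $\det P=\det Q_-\cdot\det Q_+$ together with $0<z_1<1<z_2$ forces these parameters to equal the zeros of $\det P$. Once that is said, your four equations do determine $b,a,d$ in turn, and uniqueness is complete. You effectively use this determinant observation in your existence argument anyway, so it is just a matter of invoking it one paragraph earlier.
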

\begin{proof}
Note that 
	$$
    \begin{pmatrix}
        a & 0 \\
        0& 1 
    \end{pmatrix}
    \begin{pmatrix}
        1 & 1 \\
        \frac{z_1}{z} & 1 
    \end{pmatrix}
    \begin{pmatrix}
        b & 0 \\
        0& 1 
    \end{pmatrix}
	\begin{pmatrix}
		1 & 0 \\
		0& c 
	\end{pmatrix}
	\begin{pmatrix}
		1 & \frac{z}{z_2} \\
		1 & 1 
	\end{pmatrix}
	\begin{pmatrix}
		1 & 0 \\
		0& d 
	\end{pmatrix}	
	=
	\begin{pmatrix}
		ab+ac& acd+ \frac{abdz}{z_2}\\
		c+\frac{bz_1}{z} & cd+\frac{bd z_1}{z_2}
	\end{pmatrix}.
	$$
	To find  $Q_\pm$ we have to solve 
	$$
    \begin{pmatrix}
		ab+ac& acd+ \frac{abdz}{z_2}\\
		c+\frac{bz_1}{z} & cd+\frac{bd z_1}{z_2}
	\end{pmatrix}=
	\begin{pmatrix}
		a_{11} & a_{12} + b_{12} z\\
		a_{21} +b_{21}/z  & a_{22 }
	\end{pmatrix}.
	$$
	By comparing the coefficients on both sides  we obtain six equations for  the six unkowns $a,b,c,d,z_1$ and $z_2$. The parameters $z_1,z_2$ can be found from the condition $\det P(z_1)= \det P(z_2)=0$. Then  finding the remaining equation gives
    \begin{equation} \label{eq:abcd_intermsof_aijbij}
        \begin{dcases} 
            a=\frac{a_{11}z_1}{a_{21}z_1+b_{21}},\\
            b= \frac{b_{21}}{z_1},\\
            c= a_{21},\\
            d= \frac{a_{12}(a_{21}z_1+b_{21})}{a_{11}a_{21}z_1}.
        \end{dcases}
     \end{equation}
  	 This determines the factorization $P=Q_-Q_+$ uniquely.
\end{proof}
Because of the special structure of $\mathcal S_\pm$ we have uniqueness of the factorization. However, for our purposes we need an additional degree of freedom by adding a diagonal factor.  Indeed, if $P=Q_-Q_+$ then 
$P_-=Q_- D$ and $P_+= D^{-1} Q_+$  
for any diagonal matrix $D$ also provides a factorization of $P$ such that $P_+P_- = D^{-1} Q_+Q_- D \in \mathcal S(z_1,z_2,c_1,c_2)$. We will use this additional degree of freedom by requiring that 
$$
    P_+ P_-=P_-P_+ + \mathcal O(1), \qquad z \to \infty.
$$
In other words, we require that the leading term in the asymptotic behavior fo $P_-P_+$ and $P_+P_-$ match. In order to achieve this, we define
\begin{equation}\label{eq:definitionD}
   D=
   \begin{pmatrix}
    1&0\\
    0& ab
    \end{pmatrix},
\end{equation}
where $a,b$ are the parameters in $Q_-$.

\begin{definition} \label{def:flow}
    Define the map $s: \mathcal S(z_1,z_2,c_1,c_2)\to \mathcal S(z_1,z_2,c_1,c_2)$ as follows: for $P\in \mathcal S(z_1,z_2,c_1,c_2)$ let $P=Q_-Q_+$ be the unique factorization from Proposition \ref{prop:uniquefact} and take $P_+=D^{-1} Q_+ $ and $P_-=  Q_-D$  where $D$ is defined by \eqref{eq:definitionD}. Then set $s(P)=P_+P_-$. 
\end{definition}
    The flow on $\mathcal S(z_1,z_2,c_1,c_2)$ that we wish to study is then defined by iterating the map $s$, i.e., the flow is defined by the recurrence
   $$
       \begin{dcases}
           P_{k+1}=s(P_k), & k\geq 0,\\
           P_0=P \in \mathcal S(z_1,z_2,c_1,c_2).
       \end{dcases}
   $$
It turns out it is rather complicated  to keep track of this dynamics, and our  goal is to describe this dynamics in a way that it  is easier to grasp. 

\subsection{Translations on an elliptic curve}

Consider the elliptic curve $\mathcal E$ over $\mathbb R$ defined by (with $c_1,c_2>0$ and $0<z_1<1<z_2$ as before)
$$\mathcal E= 
		\left\{(x,y)  \in \mathbb R^2
			\mid c_1^2 (y^2-x^2)= c_2 x(x-z_1)(x-z_2) 
		\right\}.
$$
We also assume, cf. Lemma \ref{lem:relationc1c2z1z2}, that 
\begin{equation} \label{ineq:ellipticcurve}
\frac{c_1^2}{c_2}\geq (\sqrt{ z_1}+\sqrt{ z_2})^2.
\end{equation}
This inequality  implies that we have three points on the curve whose $y$ coordinate is zero, $(0,0)$, $(-t_1,0)$ and $(-t_2,0)$, with $t_1,t_2>0$. Moreover, the curve $\mathcal E$ has two connected components 
$$
    \mathcal E_\pm =\left \{(x,y) \in \mathcal E \mid \pm x\geq 0\right\},
$$ 
one in the left half plane and the other in the right half plane. It will also be important for us that the lines $y= \pm x$ lie above and below $\mathcal E_-$, meaning that $y^2-x^2<0$ and thus $|y|/|x|<1$. Indeed, the lines $y=\pm x$ intersect $\mathcal E$ at most at three points, and we already established that these points are on $\mathcal E_+$. This implies that $\mathcal E_-$ has to lie fully below or above each of these lines and  since  $(-t_1,0)$ and $(-t_2,0)$ lie below  the line $y=-x$ and above the line $y=x$, so does $\mathcal E_-$.

	\begin{figure}[t]
		\begin{center}
	\includegraphics[scale=.6]{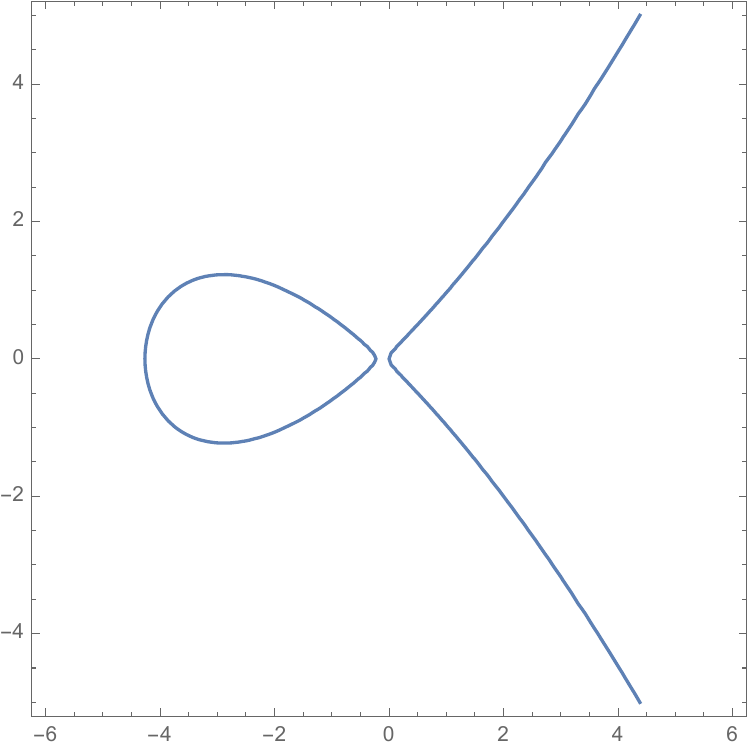}
	\caption{An example with parameter $z_1=\frac12$, $z_2=2$ and $c_1^2/c_2=7$. Under strict inequality in \eqref{ineq:ellipticcurve} we always have an oval in the left half plane. In case we have equality, the oval has shrunk to a point. }
\end{center}
	\end{figure}

There is a classical construction of addition on an elliptic curve which we will use. We can add  two points $(x_1,x_1), (x_2,y_2) \in \mathcal E$ as follows: generically, the line through $(x_1,y_1)$ and $(x_2,y_2)$ intersects the elliptic curve at exactly one point $(x_3,-y_3)$.  Then we define $(x_1,y_1)+(x_2,y_2)= (x_3,y_3)$. One exception is when  $(x_2,y_2)= (x_1,y_1)$ (in which case the addition becomes a doubling of the point), but this can be defined by continuity. The other exception  is $(x_1,y_1)+(x_1,-y_1)$ which we define to be the point at infinity.  The addition turns $\mathcal E$ into group with  the point at infinity as  zero. 

We will be mostly interested in translation by $(z_2,z_2)$   on $\mathcal E$.
Observe that if $(x,y) \in \mathcal E_-$ then $(x,y) +(z_2,z_2) \in \mathcal E_-$. We will  define the translation operator
$$\sigma : \mathcal E_- \to \mathcal E_- : (x,y)\mapsto (x,y)+(z_2,z_2).$$
It is not hard to put this into a concrete formula. Since it will be useful to have this formula at hand, and in order to simplify arguments later, we include it in the following lemma.
\begin{lemma} \label{lem:additionellipticcurve} We have
$$
\sigma (x,y)= \left(\frac{z_2(x-z_1)(y-x)}{(x-z_2)(x+y)}, \frac{z_2(y-x) (x^2+y (z_1-z_2)-z_1z_2)}{(x-z_2)^2(x+y)}\right),
$$
for all $(x,y) \in \mathcal E_-$. 
\end{lemma}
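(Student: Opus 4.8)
The plan is to verify Lemma~\ref{lem:additionellipticcurve} by a direct computation following the classical chord-and-tangent recipe for addition on an elliptic curve, specialized to adding the fixed point $(z_2,z_2)$. We write the curve in the form $f(x,y):=c_1^2(y^2-x^2)-c_2x(x-z_1)(x-z_2)=0$. Given $(x,y)\in\mathcal E_-$, the line $L$ through $(x,y)$ and $(z_2,z_2)$ has slope $m=\frac{y-z_2}{x-z_2}$ (we note $x<0<z_2$ so $x\neq z_2$, and we treat the tangent case $(x,y)=(z_2,z_2)$ separately or by continuity, though this point is in $\mathcal E_+$ anyway). Substituting $Y=z_2+m(X-z_2)$ into $f(X,Y)=0$ produces a cubic in $X$; since $c_1^2 m^2 - c_2$ is the leading coefficient, the three roots $X=x$, $X=z_2$, and $X=x_3$ satisfy
\[
x+z_2+x_3=\frac{c_2(z_1+z_2)+c_1^2(2mz_2-m^2z_2)\cdot(\text{lower order})}{c_1^2m^2-c_2},
\]
so one solves for $x_3$ by Vieta, then sets the first coordinate of $\sigma(x,y)$ equal to $x_3$ and the second coordinate equal to $-(z_2+m(x_3-z_2))$, which is the reflection across the $x$-axis.

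First I would carry out the Vieta step cleanly: expanding $c_1^2(z_2+m(X-z_2))^2-c_2X(X-z_1)(X-z_2)=0$, collect the coefficient of $X^3$, which is $-c_2$, and the coefficient of $X^2$, which is $c_1^2m^2 + c_2(z_1+z_2)$; hence the sum of the three roots is $\frac{c_1^2m^2+c_2(z_1+z_2)}{c_2}$. Subtracting the known roots $x$ and $z_2$ gives
\[
x_3=\frac{c_1^2m^2}{c_2}+z_1+z_2-x-z_2=\frac{c_1^2m^2}{c_2}+z_1-x.
\]
Then I would substitute $m=\frac{y-z_2}{x-z_2}$ and use the defining relation $c_1^2(y^2-x^2)=c_2x(x-z_1)(x-z_2)$ to eliminate $c_1^2/c_2$ in favor of $x,y,z_1,z_2$; this is the key algebraic simplification that collapses the messy expression into the stated rational form. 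Concretely, $\frac{c_1^2}{c_2}=\frac{x(x-z_1)(x-z_2)}{y^2-x^2}=\frac{x(x-z_1)(x-z_2)}{(y-x)(y+x)}$, so
\[
x_3=\frac{x(x-z_1)(x-z_2)}{(y-x)(y+x)}\cdot\frac{(y-z_2)^2}{(x-z_2)^2}+z_1-x.
\]
The claim is that this simplifies to $\frac{z_2(x-z_1)(y-x)}{(x-z_2)(x+y)}$; I would clear denominators (common denominator $(y-x)(y+x)(x-z_2)^2$) and check the resulting polynomial identity in $x,y,z_1,z_2$, using once more that $(x,y)$ lies on $\mathcal E$ wherever a factor of $c_1^2/c_2$ reappears — but in fact after the substitution above the verification is a pure polynomial identity, best left to a symbolic check or a short hand computation. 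The second coordinate then follows from $y_3 = -\bigl(z_2+m(x_3-z_2)\bigr)$ by plugging in the just-derived $x_3$ and $m$ and simplifying analogously.

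The main obstacle is purely the algebraic bookkeeping: the intermediate expressions before using the curve equation are genuinely unwieldy, and the simplification to the compact form in the statement relies on a non-obvious cancellation that is driven entirely by the substitution $c_1^2/c_2 = x(x-z_1)(x-z_2)/(y^2-x^2)$. I would organize the computation so that this substitution is applied exactly once, as early as possible, after which everything is a rational-function identity with no further appeal to the curve; this keeps the verification mechanical. I would also separately note the degenerate case where the line through $(x,y)$ and $(z_2,z_2)$ is tangent to $\mathcal E$ at $(x,y)$ (double root at $X=x$): there the same Vieta computation with $x_3$ absorbing a second copy of $x$ still yields the stated formula by continuity, consistent with the geometric description given in Section~\ref{sec:examplestorsion}, so the single closed formula of the lemma covers all $(x,y)\in\mathcal E_-$ including tangency.
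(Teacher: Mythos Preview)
Your overall strategy is exactly the paper's: parametrize the chord through $(x,y)$ and $(z_2,z_2)$, use Vieta on the resulting cubic to extract the third root, then eliminate $c_1^2/c_2$ via the curve equation. However, there is a concrete computational slip that breaks the simplification you are aiming for.

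When you substitute $Y=z_2+m(X-z_2)$ into $f(X,Y)=c_1^2(Y^2-X^2)-c_2X(X-z_1)(X-z_2)$, you write the result as $c_1^2(z_2+m(X-z_2))^2-c_2X(X-z_1)(X-z_2)=0$, dropping the $-c_1^2X^2$ term. Consequently your $X^2$--coefficient is $c_1^2m^2+c_2(z_1+z_2)$ rather than the correct $c_1^2m^2-c_1^2+c_2(z_1+z_2)$, and your Vieta step yields
\[
x_3=\frac{c_1^2m^2}{c_2}+z_1-x
\qquad\text{instead of}\qquad
x_3=\frac{c_1^2(m^2-1)}{c_2}+z_1-x.
\]
This missing ``$-1$'' is not cosmetic: with $m=\frac{y-z_2}{x-z_2}$ one has
\[
m^2-1=\frac{(y-z_2)^2-(x-z_2)^2}{(x-z_2)^2}=\frac{(y-x)(x+y-2z_2)}{(x-z_2)^2},
\]
and the factor $(y-x)$ in the numerator is precisely what cancels against the $(y-x)$ in the denominator of $\dfrac{c_1^2}{c_2}=\dfrac{x(x-z_1)(x-z_2)}{(y-x)(y+x)}$. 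Without it, the expression you write down,
\[
x_3=\frac{x(x-z_1)(x-z_2)}{(y-x)(y+x)}\cdot\frac{(y-z_2)^2}{(x-z_2)^2}+z_1-x,
\]
does \emph{not} reduce to $\dfrac{z_2(x-z_1)(y-x)}{(x-z_2)(x+y)}$; the promised ``pure polynomial identity'' would simply fail if you carried it out. Once you restore the $-c_1^2X^2$ term, the rest of your outline (including the treatment of the $y$--coordinate and the tangent case by continuity) goes through and matches the paper's proof line by line.
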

\begin{proof} 
	The line through the point $(x,y)$ and $(z_2,z_2)$ is given by the formula $Y= \lambda (X-z_2)+z_2$ where $\lambda= \frac{y-z_2}{x-z_2}$. By substituting this into the equation for $\mathcal E$, moving all terms to the right-hand side and collecting the coefficient of $X^2$ we obtain 
	$$- \lambda^2c_1^2+c_1^2-c_2 (z_1+z_2),$$
	and this equals $-c_2$ times the sum of the three zeros of the resulting cubic equation for $X$. In other words, after setting $(x^*,-y^*)=(x,y)+(z_2,z_2)$ we have 
	$$-c_2(x^*+z_2+x)= -c_1^2 \lambda^2+c_1^2-c_2 (z_1+z_2).
	$$
	Thus,
	$$x^*=\frac{c_1^2}{c_2}(\lambda^2-1) +z_1-x=\frac{c_1^2}{c_2}\left(\frac{(y-x)(x+y-2z_2)}{(x-z_2)^2}\right) +z_1-x.$$
		Now use the fact that $(x,y) \in \mathcal E$ to find 
		\begin{multline*}x^*= \frac{(x-z_1) x (x+y-2z_2)}{(x-z_2)(x+y)}+z_1-x\\
		= \frac{(x-z_1)}{(x-z_2)(x+y)}\left(x(x+y-2z_2)-(x-z_2)(x+y)\right)
		= \frac{z_2(x-z_1)(y-x)}{(x-z_2)(x+y)}.
		\end{multline*}
		Inserting this back into $y^*= \lambda (x^*-z_2)+z_2$ we find 
		\begin{multline*}
            y^*=
            z_2 \left(\left(\frac{(y-z_2)(x-z_1)(y-x)}{(x-z_2)^2(x+y)}-\frac{y-z_2}{x-z_2}\right)+1\right)
            =z_2 \left(\frac{(y-z_2)(x-z_1)(y-x)}{(x-z_2)^2(x+y)}+\frac{x-y}{x-z_2}\right),
        \end{multline*}
		and further simplification shows
		$$y^*= \frac{z_2(y-x) \left((y-z_2)(x-z_1)-(x+y)(x-z_2)\right)}{(x-z_2)^2(x+y)}=-\frac{z_2(y-x) \left(x^2+y(z_1-z_2)-z_1z_2\right)}{(x-z_2)^2(x+y)}.$$
		By flipping the sign of $y^*$ we thus obtain the statement. 	
\end{proof}

\subsection{Equivalence of the flows} \label{sec:eqflow}

Our main point is that the flows $s$ and $\sigma$ from Definition \ref{def:flow} and Lemma \ref{lem:additionellipticcurve} are equivalent.  We start with the following. 
\begin{proposition}
	The map $\pi: (0,\infty) \times  \mathcal E_- \to S(z_1,z_2,c_1,c_2)$ defined by 
\begin{equation} \label{eq:defpi}
	\pi (u,(x,y) ) = 
	\begin{pmatrix} 
			c_1\left(1-\frac{y}{x} \right) & u(z-x)\\
		\frac{c_2}{u} \left(1-\frac{z_1z_2}{ x z}\right)  			
        & c_1 \left(1+\frac  y x \right)
	\end{pmatrix}
\end{equation}
is  well-defined and a bijection.
\end{proposition}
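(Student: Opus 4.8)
The plan is to verify the claim in three stages: first that $\pi(u,(x,y))$ actually lands in $\mathcal S(z_1,z_2,c_1,c_2)$ for every $(u,(x,y))$ in the domain, second that $\pi$ is injective, and third that it is surjective. For the first stage I would check membership directly from the definitions. The matrix in \eqref{eq:defpi} has the form $\begin{pmatrix} a_{11} & a_{12}+b_{12}z \\ a_{21}+b_{21}/z & a_{22}\end{pmatrix}$ with $a_{11}=c_1(1-y/x)$, $a_{22}=c_1(1+y/x)$, $a_{12}=-ux$, $b_{12}=u$, $a_{21}=c_2/u$, $b_{21}=-c_2 z_1 z_2/(ux)$. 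Since $(x,y)\in\mathcal E_-$ we have $x<0$ and, as recalled in the excerpt, $|y/x|<1$, so $a_{11},a_{22}>0$; also $u>0$, $z_1z_2>0$ and $x<0$ make $a_{12},b_{12},a_{21},b_{21}>0$, so the matrix lies in $\mathcal S$. Next I would compute $\Tr\pi = a_{11}+a_{22}=2c_1$, which is immediate, and $\det\pi$. Expanding, $\det\pi = c_1^2(1-y^2/x^2) - (b_{12}z+a_{12})(b_{21}/z+a_{21})$; the second factor is $u(z-x)\cdot \frac{c_2}{u}(1 - z_1z_2/(xz)) = c_2(z-x)(xz-z_1z_2)/(xz)$. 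Using the defining equation of $\mathcal E$, namely $c_1^2(y^2-x^2)=c_2 x(x-z_1)(x-z_2)$, the term $c_1^2(1-y^2/x^2) = -c_2(x-z_1)(x-z_2)/x$. A short algebraic simplification of $-c_2(x-z_1)(x-z_2)/x - c_2(z-x)(xz-z_1z_2)/(xz)$ should collapse to $-c_2(z-z_1)(z-z_2)/z$, which is exactly the required determinant; this is the one genuinely computational point, but it is a routine polynomial identity in $z$ once $x$ is eliminated via the curve equation.

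For injectivity and surjectivity I would use the explicit inversion formulas already essentially present in the paper. Given a target $P\in\mathcal S(z_1,z_2,c_1,c_2)$ with entries $a_{ij},b_{ij}$, I must recover $u$ and $(x,y)$. From the off-diagonal entries of \eqref{eq:defpi}, $b_{12}=u$ determines $u$ uniquely, and then $a_{12}=-ux$ gives $x=-a_{12}/b_{12}$; consistency with $a_{21}=c_2/u$ and $b_{21}=-c_2z_1z_2/(ux)$ must be checked, and it follows from $c_2=a_{21}b_{12}$ (which is \eqref{eq:equationsforc1c2}) and from $z_1 z_2 = a_{12}b_{21}/(a_{21}b_{12})$ (which is the first equation of \eqref{eq:equationsforz1z2}). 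Finally the diagonal entries give $y$: from $a_{11}=c_1(1-y/x)$ we get $y = x(1 - a_{11}/c_1)$, and the constraint $a_{11}+a_{22}=2c_1$ guarantees the value obtained from $a_{22}$ agrees. So the preimage is unique when it exists, giving injectivity. For surjectivity I would then verify that the triple $(u,x,y)$ so produced actually satisfies $(x,y)\in\mathcal E_-$: that $x<0$ follows since $a_{12}/b_{12}>0$, and that $(x,y)$ lies on $\mathcal E$ follows from running the determinant identity of the first stage backwards, using $\det P(z) = -c_2(z-z_1)(z-z_2)/z$ as a hypothesis and the fact that an identity of rational functions in $z$ forces the coefficient relation equivalent to the curve equation. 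One should also check $u>0$, which is clear since $u=b_{12}>0$.

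The main obstacle I anticipate is bookkeeping rather than conceptual difficulty: matching the rational-function identity $\det\pi(u,(x,y))(z) = -c_2(z-z_1)(z-z_2)/z$ requires being careful that the elimination of $x$ via the curve equation is done in exactly the right place, and in the reverse direction one has to argue that equality of two degree-$(-1,1)$ rational functions of $z$ (i.e.\ of the form $(\text{quadratic})/z$) for all $z$ forces equality of all three coefficients, one of which encodes precisely membership of $(x,y)$ in $\mathcal E$ — the other two being automatically satisfied by construction. I would also take a moment to record that the condition $0<z_1<1<z_2$ together with inequality \eqref{ineq:ellipticcurve} is what makes $\mathcal E_-$ a genuine (possibly degenerate) oval on which $|y/x|<1$, so that positivity of all six entries of $\pi$ is not vacuous; this positivity is the only place the hypothesis $u>0$ and the sign of $x$ interact, and it is what pins down that the image is $\mathcal S(z_1,z_2,c_1,c_2)$ and not something larger.
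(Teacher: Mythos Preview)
Your proposal is correct and follows essentially the same approach as the paper: both verify well-definedness by checking positivity of the six entries (using $x<0$ and $|y/x|<1$ on $\mathcal E_-$) together with a direct trace/determinant computation, and both establish bijectivity by writing down the explicit inverse $u=b_{12}$, $x=-a_{12}/b_{12}$, $y=x(1-a_{11}/c_1)$ and then arguing that the determinant identity forces $(x,y)\in\mathcal E_-$. Your additional remark that consistency of the $21$-entry comes from \eqref{eq:equationsforc1c2} and \eqref{eq:equationsforz1z2} is a detail the paper leaves implicit but is a welcome clarification.
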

\begin{proof}
    First, since $x<0$ and $|y|<|x|$ for $(x,y) \in \mathcal E_-$ we see that all entries and coefficients of $\pi(u,(x,u))$ are positive and thus $\pi(u,(x,y)) \in \mathcal S$. To see that $\pi(u,(x,y)) \in  S(z_1,z_2,c_1,c_2)$ we have to check that the defining equations match. To this end, we note that 
$$\Tr \pi (u,(x,y))=2c_1,$$ 
and 
\begin{multline} \label{eq:detPiuxy}
    \det  \pi (u,(x,y))= c_1^2\left(1-\frac{y^2}{x^2} \right)-c_2(z-x)\left(1-\frac{z_1z_2}{ x z}\right)\\
    %=\frac{c_1^2\left(x^2-y^2 \right)+c_2 (x^3+z_1z_2 x) }{x^2}-c_2 \left(z+\frac{z_1z_2}{z}\right)  \\
    %= \frac{c_1^2\left(x^2-y^2 \right)+c_2 (x^3-(z_1+z_2)x+z_1z_2 x) }{x^2}-c_2 \left(z-(z_1+z_2) + \frac{z_1z_2}{z}\right)  \\
    = \frac{c_1^2\left(x^2-y^2 \right)+c_2 x(x-z_1)(x-z_2) }{x^2}-\frac{c_2 (z-z_1)(z-z_2)}{z}. 
\end{multline}
Hence, 
$$
    \det  \pi (u,(x,y))=-c_2 (z-z_1)(z-z_2)/z
$$
if and only if $(x,y) \in \mathcal E_-$ (note that we already observed that $x<0$). Therefore, $\pi(u,(x,y)) \in \mathcal S(z_1,z_2,c_1,c_2)$.

To establish that $\pi$ is a bijection we construct the inverse map as follows. It is not difficult to see that any matrix from the general space $S$  can be written as in the right-hand side of \eqref{eq:defpi} after choosing $c_1,c_2,z_1,z_2$ as in \eqref{eq:equationsforc1c2} and \eqref{eq:equationsforz1z2} and $u,x,y$ as 
$$
	\begin{dcases}
		u=b_{12},\\
        x=-\frac{a_{12}}{b_{12}},\\
        y= \frac{a_{12}}{b_{12}}  \left(\frac{a_{11}-a_{22}}{a_{11}+a_{22}}\right).
	\end{dcases}
$$
 By the assumptions $a_{ij}>0$ and $b_{ij}>0$ we see that $u,c_1,c_2,z_1z_2>0,$ hence $x<0$ and $|y|<|x|$. We still need to verify that $(x,y)$ lies on the elliptic curve. But this follows from the computation of the determinant \eqref{eq:detPiuxy}. Indeed, since the determinant matches with $\det P(z)$ we must have that $(x,y) \in \mathcal E$. Since we already know that $x<0$ we  find $(x,y) \in \mathcal E_-$, and we have thus proved the statement.
\end{proof}
We now come to the key point of this section.

\begin{theorem} \label{thm:equivalence}
    For any $(u,(x,y)) \in (0,\infty)\times \mathcal E_-$ we have $\pi(u,\sigma(x,y))=s(\pi(u,(x,y)))$.
\end{theorem}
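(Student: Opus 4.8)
The plan is to prove the identity by direct computation. Given $(u,(x,y))\in(0,\infty)\times\mathcal E_-$, set $P=\pi(u,(x,y))$; I would run the Wiener--Hopf factorization of Proposition~\ref{prop:uniquefact} on $P$, apply the map $s$ from Definition~\ref{def:flow}, and recognise the result as $\pi$ evaluated at the point $\sigma(x,y)$ from Lemma~\ref{lem:additionellipticcurve}. Note at the outset that both sides of the claimed identity lie in $\mathcal S(z_1,z_2,c_1,c_2)$ — the left side because $\pi$ maps into it and $\sigma$ preserves $\mathcal E_-$, the right side because $s$ is a self-map of $\mathcal S(z_1,z_2,c_1,c_2)$ — and $\pi$ is a bijection, so it suffices to compare the $\pi$-preimages of the two matrices.

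First I would read off the entries of $P=\pi(u,(x,y))$ from \eqref{eq:defpi}: $a_{11}=c_1(x-y)/x$, $a_{22}=c_1(x+y)/x$, $b_{12}=u$, $a_{12}=-ux$, $a_{21}=c_2/u$ and $b_{21}=-c_2z_1z_2/(ux)$. Since $x<0$ and $|y|<|x|$ on $\mathcal E_-$ and $0<z_1<1<z_2$, these are all positive, and the same will hold of every quantity produced below. Substituting into \eqref{eq:abcd_intermsof_aijbij} and simplifying gives the factorization parameters as explicit rational functions of $u,x,y$, for instance $a=\tfrac{c_1u(x-y)}{c_2(x-z_2)}$, $b=-\tfrac{c_2z_2}{ux}$, $c=\tfrac{c_2}{u}$, $d=-\tfrac{ux(x-z_2)}{c_1(x-y)}$. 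From this point on the defining relation of $\mathcal E_-$, written in the form $c_1^2(x-y)(x+y)=-c_2\,x(x-z_1)(x-z_2)$, will be used repeatedly to turn expressions such as $c_1^2(x-y)^2$ or $c_2(x-z_2)$ into the shape needed below.

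Next, $s(P)=D^{-1}(Q_+Q_-)D$ with $D=\mathrm{diag}(1,ab)$, and $Q_+Q_-$ is the matrix whose entries are $ab+bd\,z_1/z_2$, $a+dz/z_2$, $abc+bcd\,z_1/z$, $c(a+d)$ (the row and column order flipped relative to the computation $Q_-Q_+=P$ in the proof of Proposition~\ref{prop:uniquefact}); conjugating by $D$ gives $s(P)$ explicitly. To finish I would apply the explicit inverse of $\pi$ built in that same proof — $u'=b_{12}$, $x'=-a_{12}/b_{12}$, $y'=(a_{12}/b_{12})(a_{11}-a_{22})/(a_{11}+a_{22})$ — to the entries of $s(P)$, and check that $u'=u$ (which reduces to the cancellation $abd=z_2u$), that $x'$ collapses to $\tfrac{z_2(x-z_1)(y-x)}{(x-z_2)(x+y)}$, and that $y'$ collapses to the second coordinate in Lemma~\ref{lem:additionellipticcurve}. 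Equivalently, one can match the four entries of $s(P)$ directly against those of $\pi(u,\sigma(x,y))$, noting that $\Tr s(P)=\Tr(Q_+Q_-)=\Tr P=2c_1$ makes the diagonal comparison consistent.

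The real content, and the main obstacle, is purely the algebra of this last step: the rational expressions for the entries of $s(P)$ do not look like those of $\pi(u,\sigma(x,y))$ until the elliptic curve equation is inserted at the right places — for instance one replaces $c_2\,x(x-z_2)$ by $-c_1^2(x-y)(x+y)/(x-z_1)$ to collapse the two diagonal entries, and uses $c_1^2(x-y)^2=-c_2\,x(x-z_1)(x-z_2)(x-y)/(x+y)$ to collapse $x'$. I would also keep track of positivity of all entries and coefficients throughout, so that the application of Proposition~\ref{prop:uniquefact} is legitimate (its factorization requires strictly positive data) and so that the output genuinely lies in $\mathcal S(z_1,z_2,c_1,c_2)$; the invariance of $z_1,z_2,c_1,c_2$ under $s$ is automatic since $s$ only conjugates $Q_+Q_-$, which shares its trace and determinant with $P$.
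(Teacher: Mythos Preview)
Your proposal is correct and follows essentially the same route as the paper's proof: compute the factorization parameters $a,b,c,d$ of $\pi(u,(x,y))$ explicitly via \eqref{eq:abcd_intermsof_aijbij}, form $s(P)=D^{-1}Q_+Q_-D$, then read off $(u',x',y')$ using the inverse of $\pi$ and check against the formula for $\sigma(x,y)$ in Lemma~\ref{lem:additionellipticcurve}. The paper carries out exactly these steps (obtaining the same expressions for $a,b,c,d$ and the same matrix for $s(P)$), with the only cosmetic difference that it rewrites $d$ via the curve equation as $d=\tfrac{uc_1(x+y)}{c_2(x-z_1)}$ before simplifying $y'$, which streamlines the algebra you flagged as the main obstacle.
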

\begin{proof}
    Since $\pi$ is a bijection, there must exist $(u',(x',y')) \in (0,\infty) \times \mathcal E_-$ such that $s(\pi(u,(x,y)))=\pi(u',(x',y'))$. 
    We first compute $s(\pi(u,(x,y)))$. Note that from Proposition \ref{prop:uniquefact} and \eqref{eq:abcd_intermsof_aijbij} we have $\pi(u,(x,y))=Q_-Q_+$ with 
    \begin{equation}\label{eq:abcd}
        \begin{dcases} 
            a=\frac{uc_1(x-y)}{c_2(x-z_2)},\\
            b=-\frac{c_2z_2}{xu},\\
            c=\frac{c_2}{u},\\
            d=\frac{-u x(x-z_2)}{c_1(x-y)}.
        \end{dcases}
    \end{equation}
    We note that since $(x,y)$ is a point on the elliptic curve, we can rewrite $d$ as 
    $$
     d=\frac{uc_1(x+y)}{c_2(x-z_1)}.
    $$
    Now we can compute
    $$
     s(\pi(u,(x,y))=P_+(z)P_-(z)=D^{-1} Q_+(z)Q_-(z) D= 
     \begin{pmatrix}
        ab+\frac{bdz_1}{z_2} &  a^2b+\frac{abdz}{z_2}\\
         c+ \frac{cdz_1}{az} &  ac+cd
     \end{pmatrix}.
    $$
    To find $(u',(x',y'))$ such that $s(\pi(u,(x,y))=\pi(u',(x',y'))$ we argue as follows. From \eqref{eq:defpi} we see that $u'$ is the coefficient of $z$ in the 12-entry. This gives $u'=u, $
    so the parameter $u$ is unchanged under the flow. 

    Then $x'$ is the zero of the 12-entry viewed as a linear function in $z$ and  thus 
    $$ 
        x'=\frac{-z_2a}{d}=\frac{z_2(y-x)(x-z_1)}{(x+y)(x-z_2)}.
    $$
   Next, by looking at the 22-entry of $P_+P_-$ we find  
    $$
        c(a+d)
        =c_1\frac{(x-z_1)(x-y)+(x-z_2)(x+y)}{(x-z_1)(x-z_2)}.
    $$
    By solving for $y'$ from the 22-entry of $\pi(u',(x',y'))$, cf. \eqref{eq:defpi}, we find 
    \begin{multline*}
     y'=\left(\frac{c(a+d)}{c_1}-1\right)x'=\left(\frac{(x-z_1)(x-y)+(x-z_2)(x+y)}{(x-z_1)(x-z_2)}-1\right)\frac{z_2(y-x)(x-z_1)}{(x+y)(x-z_2)}\\
     =\left(\frac{(x-z_1)(x-y)+(x-z_2)(y+z_1)}{(x-z_1)(x-z_2)}\right)\frac{z_2(y-x)(x-z_1)}{(x+y)(x-z_2)}\\
     =\frac{z_2(x^2+y(z_1-z_2)-z_1z_2)(y-x)}{(x+y)(x-z_2)^2}.
    \end{multline*}
   Thus, $(x',y')$ matches with $(z_2,z_2)+(x,y)$ from Lemma \ref{lem:additionellipticcurve} as desired.
\end{proof}

\subsection{Wiener-Hopf factorizations} \label{sec:WH}

Let $P(z)\in \mathcal S$  with $\mathcal S$ as defined in \eqref{eq:defCalS} and $n \in \mathbb N$. In this paragraph we will show how the flows above  can be used to find an explicit Wiener-Hopf factorization 
$$ 
   (P(z))^{n+1}= P_-(z)P_+(z). 
$$
First of all, as also discussed in Section \ref{sec:intro_wh}, with $P_k(z)=s^k(P(z))$ and $P_k(z)=P_{k,-}(z)P_{k,+}(z)$ as in Definition \ref{def:flow} we can take 
$$
 P_-(z)= P_{0,-}(z)P_{1,-}(z)\cdots P_{n,-}(z),
$$
and 
$$
 P_+(z)=P_{n,+}(z)P_{n-1,+}(z)\cdots P_{0,+}(z).
$$
Then, by Theorem \ref{thm:equivalence} we can obtain an explicit representation in terms of the flow on the elliptic curve. To this end, we first define the functions (cf. \eqref{eq:abcd})
$$
  \begin{dcases}
    a(x,y)=\frac{uc_1(x-y)}{c_2(x-z_2)},\\
    b(x,y)=-\dfrac{c_2z_2}{xu}.
  \end{dcases}
$$
 Using the parametrizaton for $P(z)$ as in \eqref{eq:defpi} we then have, by Theorem \ref{thm:equivalence},
\begin{equation}
P_{j,-}(z)=b( \sigma^j(x,y))) 
    \begin{pmatrix} 
     a(\sigma^j(x,y))& 0 \\
     0 & 1
    \end{pmatrix} 
    \begin{pmatrix} 
     1& 1 \\
     \frac{z_1}{z} & 1
    \end{pmatrix}
    \begin{pmatrix} 
     1& 0 \\
     0 & a(\sigma^j(x,y)) 
    \end{pmatrix}.
 \end{equation}
Hence,
\begin{multline}
   P_{0,-}(z)P_{1,-}(z)\cdots P_{n,-}(z)=\prod_{j=0}^n  b(\sigma^j(x,y)))\\
  \times \prod_{j=0}^n 
   \begin{pmatrix} 
    a(\sigma^j(x,y))& 0 \\
    0 & 1
   \end{pmatrix} 
   \begin{pmatrix} 
    1& 1 \\
    \frac{z_1}{z} & 1
   \end{pmatrix}
   \begin{pmatrix} 
  1& 0 \\
    0 & a(\sigma^j(x,y))
   \end{pmatrix}
\end{multline}
For future reference, we note that the constant pre-factor is of no interest to us and will cancel out in the integrand for the double integral formula of Proposition \ref{prop:BD} for the correlation kernel. It is thus the evolution of $a(\sigma^j(x,y))$ that is of importance. 

Next, define the function
$$
    d(x,y)=\frac{-u x(x-z_2)}{c_1(x-y)}.
$$
Then we have
\begin{equation}
P_{j,+}(z)= 
    \begin{pmatrix} 
     1& 0 \\
     0 & \frac{c_2}{z_2 u^2}d(\sigma_j(x,y))
    \end{pmatrix} 
    \begin{pmatrix} 
     1& \frac{z}{z_2} \\
     1 & 1
    \end{pmatrix}
    \begin{pmatrix} 
     1& 0 \\
     0 & d(\sigma^j(x,y))
    \end{pmatrix}.
 \end{equation}
Hence,
\begin{equation}
   P_{n,+}(z)P_{n-1,+}(z)\cdots P_{0,+}(z)=
  \prod_{j=0}^n 
   \begin{pmatrix} 
    1 & \\
    0&  \frac{c_2}{z_2u^2}d(\sigma_j(x,y))
   \end{pmatrix} 
   \begin{pmatrix} 
    1& \frac{z}{z_2} \\
    1 & 1
   \end{pmatrix}
   \begin{pmatrix} 
    1& 0 \\
    0 & d(\sigma_j(x,y))
   \end{pmatrix}.
\end{equation}

 \section{Proofs of the main results} \label{sec:proofs}
 We now return to the model of the biased doubly periodic Aztec diamond from Section \ref{sec:dimer} and prove our main results. 

 \subsection{Proof of Theorem \ref{thm:main_result}} \label{sec:proofflow}
 \begin{proof}[Proof of Theorem \ref{thm:main_result}]
We recall from Proposition \ref{prop:BD} that we are interested in finding a factorization for 
$$
    A(z)= \frac{1}{(1-a^2/z)^N} (P(z))^N,
$$
where 
$$
P(z)=  \begin{pmatrix}
    \alpha & a \alpha z\\
    \frac{\alpha}{a}   & \frac{1}{\alpha}
\end{pmatrix} 
\begin{pmatrix}
    1 &  a\\
    \frac{a}{z} & 1
\end{pmatrix}.
$$
Comparing this with the setting of Section \ref{sec:eqflow} we see that we have the special case 
\begin{equation}\label{eq:general_to_aztec}
 \begin{dcases}
    z_1=a^2, \\
    z_2=1/a^2,\\
    c_1=\frac12 (a^2+1)(\alpha+1/\alpha),\\
	c_2=a^2,\\
    u=a \alpha,
 \end{dcases}
\end{equation}
and thus the elliptic curve can be written as 
$$
y^2-x^2=\frac{4x(x-a^2)(x-1/a^2)}{(a+1/a)^2(\alpha+1/\alpha)^2}.
$$
The flow starts with the initial parameters $(x_0,y_0)=(-1,-\frac{1-\alpha^2}{1+\alpha^2})$. The theorem is a straightforward consequence of the factorization of  Section \ref{sec:WH}. 
\end{proof}

\subsection{Proof of Lemma \ref{lem:spectral_factorization_intro}} \label{sec:prooflemmasp}

\begin{proof}[Proof of Lemma \ref{lem:spectral_factorization_intro}]
    It is readily verified that \eqref{eq:spectral_decomposition_Pz_intro} holds.  An important observation is that 
 $$
      \left(P^{(d)}_-(z)P^{(d)}_+(z)\right)^2=(P(z))^{2d}=\left(P^{(d)}_-(z)\right)^2 \left(P^{(d)}_+(z)\right)^2.
 $$
 This implies that $P(z)^d$, $P^{(d)}_-(z)$ and $P^{(d)}_+(z)$ commute\footnote{We are grateful to Tomas Berggren for reminding us of this fact}, and therefore are simultaneously diagonalizable. Hence, we can write $P_\pm^{(d)}(z)$ as in   \eqref{eq:spectral_decomposition_Pminus_intro} and \eqref{eq:spectral_decomposition_Pplus_intro}. Furthermore, note that we can rewrite \eqref{eq:spectral_decomposition_Pminus_intro} and  \eqref{eq:spectral_decomposition_Pplus_intro} as 
 \begin{equation} \label{eq:munuinE}
    E(z)^{-1}P^{(d)}_-(z)E(z)=  
 \begin{pmatrix}
     \mu_1(z) & 0 \\
  0 & \mu_2(z)
 \end{pmatrix}, \quad E(z)^{-1}P^{(d)}_+(z)E(z)=  
 \begin{pmatrix}
     \nu_1(z) & 0 \\
  0 & \nu_2(z)
 \end{pmatrix},
\end{equation}
with $E(z)$ as in \eqref{eq:general_eigenvectors_intro}. Now the entries of $E(z)$ and $E(z)^{-1}$ are meromorphic functions for $z \in \mathbb C \setminus \left((-\infty,x_1]\cup [x_2,0]\right)$.  From \eqref{eq:munuinE} we then see that $\mu_{1,2}$ and $\nu_{1,2}$ are also meromorphic for $z \in \mathbb C \setminus \left((-\infty,x_1]\cup [x_2,0] \right)$. Now,  on the cuts $(-\infty,x_1]\cup [x_2,0]$ we have 
$$  
    E_+(z)=E_-(z) \begin{pmatrix} 0 & 1 \\ 1 & 0 \end{pmatrix},
$$ 
where $E_\pm (z)= \lim_{\eps \downarrow 0}E(z\pm \eps i)$. This implies that, for $z \in (-\infty,x_1)\cup (x_2,0)$, we have 
$$
\mu_{1,\pm}(z)=\mu_{2,\mp}(z), \qquad \nu_{1,\pm}(z)=\nu_{2,\mp}(z),
$$
where $\mu_{j,\pm}= \lim_{\eps\downarrow 0} \mu_j(z+\eps i)$ and  $\nu_{j,\pm}= \lim_{\eps\downarrow 0} \nu_j(z+\eps i)$. Therefore, we see that
the functions $\mu$ defined by $\mu(z^{(j)})=\mu_j(z)$ and, similarly, $\nu$ defined  $\nu(z^{(j)})=\nu_j(z)$ extend to meromorphic functions on $\mathcal R$. 

Clearly, $\mu$ and $\nu$ must satisfy \eqref{eq:spectralfactorization_intro}.

What remains is the statement on the zeros and poles of $\nu$ and $\mu$. By \eqref{eq:spectral_curve_nu_intro}, any pole of $\nu$  is a pole of  $\Tr P_+^{(d)}(z)$ and/or $\det P_+^{(d)}(z)$. Since $\Tr P_+^{(d)}(z)$ can only possibly have  a pole at  $z=\infty$, and $ \det P_+^{(d)}(z)$ has exactly one pole which is at $z=\infty$ of degree $d$,  we see that  $\nu$ has a pole at the branch point $z=\infty$ of degree $d$ and no other. The zeros of $\nu$ can then be determined from the zeros of $ \det P_+^{(d)}(z)$, and this shows that the only possible locations of the zeros are $z=(a^{-2})^{(1)}$ and $z=(a^{-2})^{(2)}$, where the sum of the orders equals $d$. By \eqref{eq:spectralfactorization_intro} and the fact that $\lambda$ has no zero at $z=(a^{-2})^{(1)}$, it follows that $\nu$ has  a zero at $z=(a^{-2})^{(2)}$ of order $d$. The poles and zeros of $\mu$ can be determined analogously. 
\end{proof}

\subsection{Proof of Theorem \ref{thm:correlationkernel_spectralcurve}} \label{sec:prooftheoremp}
\begin{proof}[Proof of Theorem \ref{thm:correlationkernel_spectralcurve}]
    Note that by \eqref{eq:defF} we can rewrite the spectral decomposition \eqref{eq:spectral_decomposition_Pz_intro} as
    $$
     P(w)=F(w^{(1)}) \lambda(w^{(1)})+ F(w^{(2)}) \lambda(w^{(2)}),
    $$
    and, similarly for $P_+(w)$, 
    $$
    P_+^{(d)}(w)=F(w^{(1)}) \nu(w^{(1)})+ F(w^{(2)}) \nu(w^{(2)}).
    $$
    Combining this with $F(w^{(1)})F(w^{(2)}) =\mathbb O$ (the zero matrix), we see that 
    \begin{multline}\label{eq:powerPinspectral1}
        P(w)^{-m'}P(w)^{d(N-T)}(P_+^{(d)}(w))^{-N}\\
            =F(w^{(1)}) \lambda(w^{(1)})^{d(N-T)-m'} \nu(w^{(1)})^{-N}+ F(w^{(2)}) \lambda(w^{(2)})^{d(N-T)-m'} \nu(w^{(2)})^{-N}\\
            =F(w^{(1)}) \lambda(w^{(1)})^{-m'}  \mu(w^{(1)})^{N-T}\nu(w^{(1)})^{-T}+ F(w^{(2)}) \lambda(w^{(2)})^{-m'}  \mu(w^{(2)})^{N-T}\nu(w^{(2)})^{-T}.
    \end{multline}
    In the same way, 
    \begin{multline} \label{eq:powerPinspectral2}
        (P_-^{(d)}(z))^{-N} P(z)^{dT}P(z)^m\\
        =F(z^{(1)}) \lambda(z^{(1)})^{m}  \mu(z^{(1)})^{T-N}\nu(z^{(1)})^{T}+ F(z^{(2)}) \lambda(z^{(2)})^{m}  \mu(z^{(2)})^{T-N}\nu(z^{(2)})^{T}.
    \end{multline}
    By substituting \eqref{eq:powerPinspectral1} and \eqref{eq:powerPinspectral2} in the double integral of \eqref{eq:correlationkernelmainperiodic} (with adjusted parameters) and inserting
    $$
        (P(z))^{m-m'}=F(z^{(1)}) \lambda(z^{(1)})^{m'-m}+ F(z^{(2)}) \lambda(z^{(2)})^{m'-m}
        $$
        in the single integral one obtains the statement. \end{proof}

   \subsection{Proof of Proposition \ref{prop:four_saddles}} \label{sec:prooffoursaddle}
   \begin{proof}[Proof of Proposition \ref{prop:four_saddles}]
    One can easily see that $\Phi'(z)dz$ has simple poles at $0$ and  $\infty$. On the first sheet $\Phi'(z)$ takes the form 
    \begin{equation} \label{eq:Phi1onfirst}
    \frac{(1-\tau)\mu'_1(z)}{\mu_1(z)} -\frac{\tau \nu_1'(z)}{ \nu_1(z)} +\frac{d(1-\tau+\xi)}{z}  -\frac{d(1-\tau)}{z-a^2},
    \end{equation}
    and we see that we have a simple pole at $(a^{2})^{(1)}$. On the second sheet we can use the relations $\nu_1(z)\nu_2(z)=const\cdot(z-a^2)^d$ and $\mu_1(z)\mu_2(z)=const\cdot(z-a^{{-2}})^d/z^d$,  to deduce that $\Phi'(z)$ takes the form
    \begin{equation}\label{eq:Phi1onsecond}
    -\frac{(1-\tau)\mu'_1(z)}{\mu_1(z)} +\frac{\tau \nu_1'(z)}{ \nu_1(z)} +\frac{d\xi}{z}  -\frac{d\tau }{z-a^{-2}},
    \end{equation}
    and the pole at $(a^{2})^{(2)}$ gets canceled at the cost of a  new simple pole at $(a^{-2})^{(2)}$. Thus, $\Phi'(z)dz$ has four simple poles at said locations and thus also four zeros (since $\mathcal R(z)$ is of genus 1). 

    We now show that there are at least two saddle points in $\mathcal C_1$, which can be done using the same argument as in \cite[proof of Proposition 6.4]{DK}. The point is that one can show that
    \begin{equation} \label{eq:cyclecondition}
      \oint_{\mathcal C_1} \Phi'(z)dz=0.
    \end{equation}
    Indeed, since $\nu_{1}(z)$ and $\mu_{1}(z)$ are real-valued for $z \in (x_1,x_2)$, so is $\Phi'(z^{(1,2)})$ by \eqref{eq:Phi1onfirst} and \eqref{eq:Phi1onsecond}, and thus
    $$
      \Im \oint_{\mathcal C_1} \Phi'(z)dz=0.
    $$
    As for the real part, note that 
    $$
    \oint_{\mathcal C_1} \Phi'(z)dz= \int_{x_1}^{x_2} \Phi'(z^{(1)})dz-\int_{x_1}^{x_2} \Phi'(z^{(2)})dz
    $$ 
    and, since $\Re \Phi$ is single-valued on $\mathcal R$,
    $$
    \Re \int_{x_1}^{x_2} \Phi'(z^{(1)})dz= \Re \Phi(x_2)- \Re \Phi(x_1)= \Re \int_{x_1}^{x_2} \Phi'(z^{(2)})dz.
    $$
  Therefore, also the real part of the left-hand side of \eqref{eq:cyclecondition} vanishes. By combining  this with the fact that $\Phi'(z) dz$ is real-valued and continuous on $\mathcal C_1$, we see that $\Phi'(z) dz$ must change sign at least two times.  This means that there are at least two zeros of $\Phi'(z)dz$. (Note that this argument does not work on $\mathcal C_2$ since $\Phi'(z)dz$ has two poles on $\mathcal C_2$.) 
\end{proof}

\subsection{Asymptotic analysis in the smooth phase} \label{sec:proofsaddle}

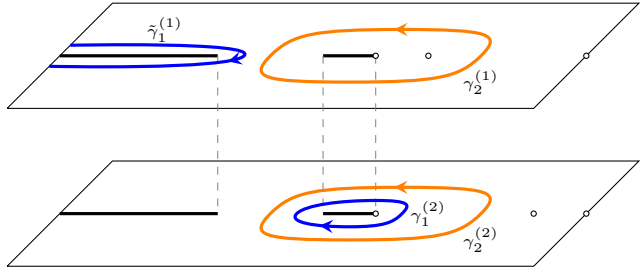
\begin{figure}[t]
    \begin{center}
    \begin{tikzpicture}[scale=.7,path/.append style={
        decoration={
            markings,
            mark=at position 0.5 with {\arrow[xshift=2.5\pgflinewidth,>=stealth]{>}}
        },
        postaction=decorate,
        thick,
      }]
        \draw (0,0)--(2,2)--(12,2)--(10,0)--(0,0);
        \draw (0,-3)--(2,-1)--(12,-1)--(10,-3)--(0,-3);
        \draw[very thick] (1,1)--(4,1);
        \draw[very thick] (6,1)--(7,1);
        \draw[very thick] (6,-2)--(7,-2);
        \draw[very thick] (1,-2)--(4,-2);
        \draw[help lines,dashed] (4,1)--(4,-2);
        \draw[help lines,dashed] (6,1)--(6,-2);
        \draw[help lines,dashed] (7,1)--(7,-2);

        \draw[path,orange, very thick] plot [smooth cycle, tension=1.5] coordinates {(5,1) (6.5,.5) (9,1) (7.5,1.5)};
        \draw[path,very thick, blue] plot [smooth cycle, tension=1.2] coordinates {(5.5,-2) (7,-1.75) (7.5,-2) (6.5,-2.25)};
        \draw[path,very thick, blue] plot [smooth, tension=2.5] coordinates {(1.2,1.2) (4.5,1) (.8,.8)};
        \draw[path,orange, very thick] plot [smooth cycle, tension=1.5] coordinates {(5,-2) (6.5,-2.5) (9,-2) (7.5,-1.5)};
        \filldraw[fill=white] (8,1) circle(.05);
        \filldraw[fill=white](10,-2) circle(.05);
        \filldraw[fill=white](7,1) circle(.05);
        \filldraw[fill=white](7,-2) circle(.05);
        \filldraw[fill=white](11,1) circle(.05);
        \filldraw[fill=white] (11,-2) circle(.05);
        \draw (9,.5) node {\tiny{$\gamma_2^{(1)}$}};
        \draw (9,-2.5) node {\tiny{$\gamma_2^{(2)}$}};
        \draw (8,-2) node {\tiny{$\gamma_1^{(2)}$}};
        \draw (3,1.5) node {\tiny{$\tilde \gamma_1^{(1)}$}};
    \end{tikzpicture}
    \caption{The first deformation of contours. The contours  $\gamma_2^{(1)}$,   $\gamma_2^{(2)}$ and $\gamma_{1}^{(2)}$  remain untouched. The blue contour $\tilde \gamma_{1}^{(1)}$ is a deformation of the contour $\gamma_{1}^{(1)}$. By deforming the contour like this, we pick up a residue at $z=w$. Note also that each  blue contour can be deformed through the cuts and be entirely, or partly, on the second sheet. The orange contour is allowed to pass the cuts provided one does not pass through the origin while deforming.}
    \label{fig:preldeform}
\end{center}
\end{figure}

We will work out the asymptotic analysis in the smooth phase. We prepare the proof of Theorem \ref{thm:gas} by first performing two steps:
\begin{enumerate}
    \item a preliminary deformation of paths.
    \item a qualitative description of the paths of steepest descent and ascent leaving from the saddle points. 
\end{enumerate}
After these steps, the asymptotic analysis follows by standard arguments. 

\subsubsection{A preliminary deformation}
We will need the following lemma on the asymptotic behavior of the integrand in \eqref{eq:correlationkernelmainperiodiceigenvalues} near the poles at $0$ and $\infty$. 
\begin{lemma}
    We have that 
    \begin{multline}\label{eq:poles?}
    \lambda(w)^{-m'}w^{x-m}(w-a^2)^{m'} \mu(w)^{N-T}\nu(w)^{-T}w^{d(T-N)+X}{(w-a^2)^{d(N-T)}}\\=\begin{cases}
        \mathcal O(|w|^{X+x'-dT/2-m'/2}), & w \to \infty,\\
        \mathcal O(|w|^{X+x'-d(N-T)/2 -m'/2}), & w \to 0.\\
    \end{cases}
\end{multline}
\end{lemma}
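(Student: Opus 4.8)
The statement is an asymptotic estimate near the two branch points $0$ and $\infty$ of $\mathcal R$, so the natural approach is to combine the known orders of zeros and poles of the meromorphic functions $\lambda$, $\mu$, $\nu$ at these points with the explicit power-of-$w$ prefactors, and check that all contributions add up as claimed. First I would recall from Lemma~\ref{lem:spectral_factorization_intro} that on $\mathcal R$ the function $\lambda$ has simple poles at $0$ and $\infty$, $\mu$ has a pole of order $d$ at $0$ and a zero of order $d$ at $(a^2)^{(2)}$, and $\nu$ has a pole of order $d$ at $\infty$ and a zero of order $d$ at $(a^{-2})^{(2)}$. I would also use the factorization $\lambda^d=\mu\nu$, which pins down the remaining local behavior: at $w\to\infty$, since $\lambda$ has a simple pole, $\lambda^d$ has a pole of order $d$, and as $\nu$ already carries a pole of order $d$ there, $\mu$ must be bounded and nonvanishing at $\infty$; symmetrically at $w\to 0$, $\mu$ has a pole of order $d$ and $\nu$ is bounded and nonvanishing.

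\textbf{The $w\to\infty$ estimate.} Near $w=\infty$ I would tabulate the orders in $w$ of each factor in the left-hand side of \eqref{eq:poles?}: $\lambda(w)^{-m'}$ contributes $w^{m'}$ (order of vanishing), $w^{x-m}$ contributes $x-m$, $(w-a^2)^{m'}$ contributes $w^{m'}$, $\mu(w)^{N-T}$ is $\mathcal O(1)$, $\nu(w)^{-T}$ contributes $w^{dT}$ (since $\nu$ has a pole of order $d$), $w^{d(T-N)+X}$ contributes $d(T-N)+X$, and $(w-a^2)^{d(N-T)}$ contributes $w^{d(N-T)}$. Here there seems to be a small bookkeeping subtlety: the claimed answer is $\mathcal O(|w|^{X+x'-dT/2-m'/2})$, which has half-integer exponents, so the statement must be using the convention that $w$ on the two sheets is identified with $\sqrt{w}$ on a uniformizing parameter — in other words the estimate is read in terms of the local coordinate $1/\sqrt{w}$ on $\mathcal R$ near $\infty$, a simple branch point. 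Once I fix that convention (each $w$-power counts double in the local coordinate, so effectively exponents are halved and $x$ gets replaced by the averaged quantity $x'$), summing the orders above gives exactly $X+x'-dT/2-m'/2$. The $w\to 0$ case is entirely parallel, using that $\mu$ has a pole of order $d$ at $0$ while $\nu$ is $\mathcal O(1)$, and that near $0$ the local coordinate is $\sqrt{w}$.

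\textbf{Main obstacle.} The only genuinely delicate point is getting the half-integer exponents and the $x\leftrightarrow x'$ bookkeeping right: one must be careful that the stated $\mathcal O$-estimate is meant \emph{on the Riemann surface} $\mathcal R$, where $0$ and $\infty$ are branch points, so the local parameter is $w^{\pm 1/2}$ and a factor $w^k$ in the plane becomes $w^{k/2}$ in that parameter. With the correct convention the claim reduces to adding up integers (the prescribed orders of the poles and zeros) and dividing by two, which is routine; without it the two sides do not even match in the power of $w$. I would therefore spend the bulk of the write-up making the convention explicit and then just present the two tables of local orders, one for $w\to\infty$ and one for $w\to 0$, and read off the conclusion.
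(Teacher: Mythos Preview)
Your overall strategy---read off the pole and zero orders of $\lambda,\mu,\nu$ from Lemma~\ref{lem:spectral_factorization_intro} together with $\lambda^d=\mu\nu$, and add up exponents---is exactly what the paper does. But your explanation of where the half-integer exponents come from is confused, and your table mixes two incompatible bookkeeping schemes.

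The half-integers are not a convention and do not require passing to a uniformizing parameter. The estimate \eqref{eq:poles?} is in the base variable $w$; since $0$ and $\infty$ are simple branch points of $\mathcal R\to\mathbb C$, a function with a pole of order $k$ on $\mathcal R$ at one of them behaves like $|w|^{\pm k/2}$ as a function of $w$. The paper simply records
\[
\lambda(w)=\mathcal O(|w|^{1/2}),\quad \mu(w)=\mathcal O(1),\quad \nu(w)=\mathcal O(|w|^{d/2})\qquad (w\to\infty),
\]
and the analogous estimates at $w\to 0$, then multiplies. Your table instead lists ``$\lambda(w)^{-m'}$ contributes $w^{m'}$'' (an order of vanishing on $\mathcal R$) alongside ``$w^{x-m}$ contributes $x-m$'' (a power of $w$ in the plane); these are different currencies, which is why your sum does not close and you then invoke a coordinate ``convention'' to halve some exponents. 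If you convert everything to powers of $|w|$ from the start, the sum is immediate.

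Separately, the appearance of $x'$ rather than $x$ on the right-hand side is just a typo in the displayed left-hand side: the $w$-dependent part of the integrand in \eqref{eq:correlationkernelmainperiodiceigenvalues} carries $w^{x'-m'}$, not $w^{x-m}$. It has nothing to do with averaging under a coordinate change.
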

\begin{proof}
The behavior near $w=\infty$ follows readily after observing 
$$
\begin{cases}
    \lambda(w)=\mathcal O(|w|^{1/2}),\\
    \mu(w)=\mathcal O(1),\\
    \nu(w)=\mathcal O(|w|^{d/2}),
\end{cases}
$$
as $w\to \infty$.  

Similarly, the behavior near $w=0$ follows after observing 
$$
\begin{cases}
    \lambda(w)=\mathcal O(|w|^{-1/2}),\\
    \mu(w)=\mathcal O(|w|^{-d/2}),\\
    \nu(w)=\mathcal O(1),
\end{cases}
$$
as $w\to 0$.  \end{proof}

It is important to observe that we are considering $(\tau,\xi)$ in the parallellogram defined by 
$\tau=0$, $\tau=1$, $\xi=\tau/2$ and $\xi=(\tau-1)/2$. By \eqref{eq:point_zoom_in} this means that for any  $x',m' \in \mathbb Z$ we have that 
\begin{equation}\label{eq:nopoles:)}
    X-d(N-T)/2'+x'-m'/2>0, \quad  X-dT/2+x'-m'/2<0,
\end{equation}
for  $N$ sufficiently large, and thus the left-hand side of \eqref{eq:poles?} has no poles (and no residues) for either $w=0$ or $w=\infty$.\\

We proceed with the first contour deformation. The contours $\gamma_2^{(1)}$, $\gamma_2^{(2)}$ and $\gamma_1^{(2)}$ will be untouched, but the contour $\gamma_1^{(1)}$  is deformed to the contour $\tilde \gamma_1^{(1)}$ that goes around the cut $(-\infty,x_1)$ in clockwise direction, as indicated in Figure \ref{fig:preldeform}.  While deforming we pick up possible residues at the pole at $w=\infty$ and at $w=z$ for $z \in \gamma_2^{(1)}$. As mentioned above, with our choice of parameters there is no pole at $w=\infty$. The pole at $w=z$ has a residue for $z \in \gamma_2^{(1)}$, and this gives us a contribution:
$$
\frac{1}{2 \pi i} 
\int_{\gamma_2^{(1)}} A_e(z)^{-\eps'} F(z)A_o(z)^{\eps} \lambda(z) ^{m-m'}\frac{z^{m-x-m'+x'}}{(z-a^2)^{m-m'}} \frac{dz}{z}.
$$
This means that we can rewrite \eqref{eq:correlationkernelmainperiodiceigenvalues} as 
\begin{multline} \label{eq:correlationkernelmainperiodiceigenvaluesafterdeforming}
    \left[K_{   d N}((2dT+2m+\eps, 2X +2x-j),(2dT +2m'+ \eps',2X +2x'-j'))\right]_{j,j'=0}^1\\
    = -\frac{\mathbbm{1}_{2m'+\eps'<2m+\eps}}{2 \pi i} 
    \int_{\gamma_2^{(2)}} A_e(z)^{-\eps'} F(z)A_o(z)^{\eps} \lambda(z) ^{m-m'}\frac{z^{m-x-m'+x'}}{(z-a^2)^{m-m'}} \frac{dz}{z}\\
    +\frac{\mathbbm{1}_{2m'+\eps'\geq 2m+\eps}}{2 \pi i} 
    \int_{\gamma_2^{(1)}} A_e(z)^{-\eps'} F(z)A_o(z)^{\eps} \lambda(z) ^{m-m'}\frac{z^{m-x-m'+x'}}{(z-a^2)^{m-m'}} \frac{dz}{z}\\
    +\frac{1}{(2 \pi i)^2}
    \oint_{\tilde \gamma_1^{(1)} \cup \gamma_1^{(2)}} \oint_{\gamma_2^{(1)} \cup \gamma_2^{(2)}} A_e(w)^{-\eps'}  F(w) F(z) A_{o}(z)^{\eps} \frac{\lambda(z)^{m} }{\lambda(w)^{m'}}\frac{w^{x'-m'} }{z^{x-m}}\frac{(w-a^2)^{m'}}{(z-a^2)^{m}} \\
    \times 
    \frac{\mu(w)^{N-T}}{ \mu(z)^{N-T} }\frac{\nu(z)^{T}}{\nu(w)^{T}}
     \frac{w^{d(N-T)+X}}{z^{d(N-T)X}}\frac{(z-a^2)^{d(N-T)}}{(w-a^2)^{d(N-T)}}\frac{dw dz}{z(z-w)}.
\end{multline}
This finishes the preliminary deformation.

Before we continue to the steepest descent analysis, we mention that by \eqref{eq:poles?} and \eqref{eq:nopoles:)}, the integrand with respect to $w$ has no pole at $w=0$. This means that we can deform the contour $\gamma_1^{(2)}$ to lie partly or even entirely on the first sheet. The integrand with respect to $z$ does have poles at $z=0$ and $z= \infty$ and thus, the contours $\gamma_1^{(1)}$ and $\gamma_2^{(2)}$ may be deformed over the surface $\mathcal R$ but cannot pass through the origin (without picking up a  residue). 
\subsubsection{Description of the paths of steepest descent/ascent}

By definition, we have four saddle points in the cycle $\mathcal C_1$, and in the interior of the smooth region these are distinct and simple. By viewing $\Re \Phi$ as a function on the cycle $\mathcal C_1$, these saddle points will correspond to the locations of the two local minima and two local maxima of $\Re \Phi$. We will denote the saddles associated to the local minima by $s_1$ and $s_3$, and the local maxima by $s_2$ and $s_4$. We take the indexing such that when traversing the cycle $C_1$ starting from $x_1$ to $x_2$ on $\mathcal R_1$ and then from $x_2$ to $x_1$ on $\mathcal R_2$,  the first saddle point one encounters is $s_1$, then $s_2$ and so on. Note also that  both  local minima are neighbors to both local maxima  (on the cycle $\mathcal C_1)$ and therefore $\Re \Phi(s_{1,3})< \Re \Phi(s_{2,4})$.

We proceed by giving a description of the contours of steepest descent and ascent for  $\Re \Phi$ leaving from these four saddles.  Since each saddle point is simple, there will be two paths of steepest descent and two path of steepest ascent leaving from them.  It is  straightforward that the segment of $\mathcal C_1$ between $s_{2j}$ and $s_{2j\pm 1}$ is a path of steepest descent for $\Re \Phi$ leaving from $s_{2j}$ and a path of steepest ascent leaving from $s_{2j\pm1}$.  What remains, is to identify the paths of steepest descent leaving from $s_1$ and $s_3$ and the paths of steepest ascent from $s_2$ and $s_4$. These paths will continue in the lower and upper half planes of the sheets $\mathcal R_j$ and they are further characterized by the condition that 
$$
\Im\left[\int_{s_j}^z \Phi'(z)dz\right]=0.
$$
It is important to note that, even though $\Phi'(z)$ is single-valued, $\Phi(z)$ is a multi-valued function, and we cannot replace the condition simply with $\Im \Phi(z)=\Im \Phi(s_j)$. Indeed, because of the logarithmic terms the imaginary part $\Im \Phi(z)$ jumps whenever we cross a cut (which we did not specify) for a logarithm. The real part  $\Re \Phi(z)$, however,  is single-valued, and this will be important. We will also need the behavior near the logarithmic singularities of $\Re \Phi(z)$ at $z=0$, $z=(a^{2})^{(1)}$, $z=(a^{-2})^{(2)}$ and at $z=\infty$: from \eqref{eq:defPhi} (see also  \eqref{eq:Phi1onfirst} and \eqref{eq:Phi1onsecond}) 
\begin{equation} \label{eq:logsingularitiesmin}
    \Re \Phi(0)=\Re \Phi(\infty)=-\infty,
\end{equation} 
and 
\begin{equation} \label{eq:logsingularitiesplus}
    \Re \Phi((a^{2})^{(1)})=\Re \Phi((a^{-2})^{(2)})=+ \infty.
\end{equation}

 By analyticity of $\Phi'(z)$ the paths are a finite union of analytic arcs and ultimately  have to end up at some special points. The only options for such special points are other saddle points or the poles of $\Phi'$. It takes only a short argument to exclude  possibility that they will connect to another saddle point. Indeed, since $\Re \Phi(s_{1,3})< \Re \Phi(s_{2,4})$ it is impossible to connect $s_{2j}$ to $s_{2j\pm 1}$ in this way. Moreover, it is obvious that  a path of steepest descent (or ascent) from  the global minimum (or maximum) cannot be connected to any other saddle point, hence $s_1$ cannot be connected  to $s_3$ and $s_2$ cannot be connected  to $s_4$. We conclude so far that the path of steepest descent leaving from $s_{1,3}$ and the paths of steepest ascent from $s_{2,4}$ will have to end up at the four simple poles of $\Phi$. From \eqref{eq:logsingularitiesmin} we further deduce $s_{1,3}$ connect to $\infty$ and $0$, and from \eqref{eq:logsingularitiesmin} we find that $s_{2,4}$ connect to the  simple poles at $(a^2)^{(1)}$ and $(1/a^{2})^{(2)}$. 
 \begin{figure}
    \begin{center}
    \begin{tikzpicture}[scale=.6]

        \filldraw[fill opacity=.2,very thick, orange](6.5,1) .. controls (6.2,0.9) and (4.5,0.5) .. (5,1);
        \filldraw[fill opacity=.2,very thick, orange](6.5,1) .. controls (6.2,1.1) and (5.5,1.5) .. (5,1);
        \filldraw[very thick,orange, fill opacity=.2] plot [smooth cycle, tension=1.2] coordinates {(5,-2) (9.5,-1.5) (10,-2) (8.5,-2.5)};
        \filldraw[very thick, draw = orange,fill =white] (6.5,-2) .. controls (6.8,-2.1) and (9,-2.5) .. (10,-2);
        \filldraw[very thick, draw= orange, fill=white] (6.5,-2) .. controls (6.8,-1.9) and (8,-1.5) .. (10,-2);

        \draw (0,0)--(2,2)--(12,2)--(10,0)--(0,0);
        \draw (0,-3)--(2,-1)--(12,-1)--(10,-3)--(0,-3);
        \draw[very thick] (1,1)--(4,1);
        \draw[very thick] (6,1)--(7,1);
        \draw[very thick] (6,-2)--(7,-2);
        \draw[very thick] (1,-2)--(4,-2);
        \draw[help lines,dashed] (4,1)--(4,-2);
        \draw[help lines,dashed] (6,1)--(6,-2);
        \draw[help lines,dashed] (7,1)--(7,-2);

        \filldraw (4.5,1) circle(.07);
        \filldraw (5,1) circle(.07);
        \filldraw (5.5,-2) circle(.07);
        \filldraw (5,-2) circle(.07);

        \filldraw[fill=white](8,1) circle(.07);
        \filldraw[fill=white](10,-2) circle(.07);
        \filldraw[fill=white](7,1) circle(.07);
        \filldraw[fill=white](7,-2) circle(.07);
        \filldraw[fill=white](11,1) circle(.07);
        \filldraw[fill=white](11,-2) circle(.07);
    \end{tikzpicture}

        \caption{An illustration of the hypothetical case that the two saddle points $s_2$ and $s_4$ connect to the same saddle point $(1/a^2)^{(2)}$. In this case, the four paths together form a contractible curve and enclose the (shaded) region  that contains $s_3$ but not the cycle $\mathcal C_2$. This means that the steepest descent paths leaving $s_3$ will have to cross the paths from $s_2$ or $s_4$, which is not possible.   }
        \label{fig:path_steep_descent_ascent_impossible_case}
    \end{center}
\end{figure}
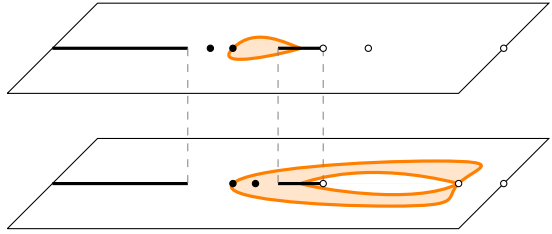

 Observe that none of these paths can cross each other, since by analyticity of $\Phi$ such a crossing  necessarily would be a saddle point (which we already excluded) or a pole. For the same reason, since $\Im \int ^z \Phi'(s)ds$ is constant on the cycles, the paths can never cross the cycles $\mathcal C_1$ and $\mathcal C_2$  as the point where it would cross would necessarily be a saddle point. The only point the paths have in common with the cycles are the saddle point at $\mathcal C_1$ they started at and the pole at $\mathcal C_2$ they end in. Hence, a path that starts at a saddle point on $\mathcal R_1$  and continues in the upper half plane of $\mathcal R_1$ always remain in the union of the upper half plane of $\mathcal R_1$ and the lower half plane of  $\mathcal R_2$ glued together along the cuts $(-\infty,x_1) \cup [x_1,x_2]$.  This   important property shows, in particular,  that steepest ascent/descent paths  do not wind around the poles of $\Phi'(z)$. 

Next we argue that the paths of steepest ascent leaving from $s_2$ and $s_4$ cannot end in the same pole. Indeed, if they would, then all these four paths together would form a closed loop that is contractible and hence cuts the Riemann surface into two parts, as illustrated in Figure \ref{fig:path_steep_descent_ascent_impossible_case}. The cycle $\mathcal C_2$ lies fully in one of the parts. But $s_1$ and $s_3$ are in different parts, and hence there must be  one of them that is in a part that is different from the part that contains the cycle $\mathcal C_2$. The  steepest descent path  that leaves that saddle point has  to cross the closed loop in order to end up at a pole on $\mathcal C_2$, which is not possible, and we arrive at a contradiction. This means that the steepest ascent paths from $s_2$ and $s_4$ have to end up at different poles, one saddle connects to $(a^2)^{(1)}$ and the other to $(1/a^2)^{(2)}$. A similar argument shows that one of the saddle points  $s_1$ and $s_3$ connect, via steepest descent paths, to  $0$ and the other to $\infty$.  

Let us summarize our findings above:
\begin{proposition}
     The steepest descent paths leaving from $s_1$ and $s_3$ and steepest ascent path from $s_2$ and $s_4$ form  simple closed loops  on $\mathcal R$, such that no two loops intersect, each loop intersects each cycle $\mathcal C_1$ and $\mathcal C_2$ exactly once, and it does so at a saddle point  for $\Re \Phi$ in $\mathcal C_1$ and a pole for $\Phi'(z)$ in $\mathcal C_2$. 
\end{proposition}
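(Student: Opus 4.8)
The plan is to assemble the observations made in the discussion above into one argument whose backbone is the contrast between two facts: $\Phi'(z)\,dz$ is a single-valued meromorphic differential on $\mathcal R$ whose only singularities are the four simple poles $0,\infty,(a^2)^{(1)},(1/a^2)^{(2)}$ located in Proposition~\ref{prop:four_saddles}, while $\Re\Phi$ is a single-valued harmonic function on $\mathcal R$ away from those poles. First I would fix the local picture at the saddles: in the interior of the smooth region $s_1,\dots,s_4$ are simple and pairwise distinct, so exactly two steepest descent arcs and two steepest ascent arcs emanate from each, alternating around it. Because $\Phi'(z)\,dz$ is real on $\mathcal C_1$ (established in the proof of Proposition~\ref{prop:four_saddles}), the two sub-arcs of $\mathcal C_1$ adjacent to a saddle are themselves steepest arcs; at a local minimum $s_1$ or $s_3$ they are the two \emph{ascent} arcs, so both steepest \emph{descent} arcs leave $\mathcal C_1$, and symmetrically at a local maximum $s_2$ or $s_4$ both steepest ascent arcs leave $\mathcal C_1$. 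These four pairs of ``off-cycle'' arcs --- descent from $s_1$ and $s_3$, ascent from $s_2$ and $s_4$ --- are what the proposition concerns, and all four saddles of $\Phi'(z)\,dz$ lie on $\mathcal C_1$ in the smooth region.

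Next I would analyze a single off-cycle arc, say a steepest descent arc $\gamma$ issuing from $s_1$. By analyticity of $\Phi'$ it is a locally finite union of analytic arcs; $\Re\Phi$ is strictly decreasing along it, so $\gamma$ can neither close up on itself nor run into a zero of $\Phi'$ at which $\Re\Phi\ge\Re\Phi(s_1)$. Combined with $\Re\Phi(s_{1,3})<\Re\Phi(s_{2,4})$ and the elementary fact that a descent arc issuing from a global minimum of $\Re\Phi|_{\mathcal C_1}$ cannot reach any critical point, this rules out $\gamma$ ending at another saddle, so $\gamma$ must terminate at one of the four simple poles. Monotonicity of $\Re\Phi$ together with the boundary values \eqref{eq:logsingularitiesmin}--\eqref{eq:logsingularitiesplus} (read off from \eqref{eq:Phi1onfirst}--\eqref{eq:Phi1onsecond}) then force a descent arc from $s_{1,3}$ to land at $0$ or $\infty$ and an ascent arc from $s_{2,4}$ to land at $(a^2)^{(1)}$ or $(1/a^2)^{(2)}$.

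I would then invoke the anti-holomorphic involution $\iota\colon z^{(j)}\mapsto \bar z^{(j)}$ of $\mathcal R$. Since $\tau,\xi$ and all the underlying rational data are real one has $\Phi\circ\iota=\overline\Phi$, so $\iota$ permutes steepest arcs; it fixes each of the (real) saddles and each of the four poles, and it interchanges the two off-cycle arcs emanating from a given real saddle. Hence both off-cycle arcs from $s_1$ land at the \emph{same} pole, and together with $s_1$ they form a simple closed loop $L_1$; likewise $L_2,L_3,L_4$. Each $L_i$ meets $\mathcal C_1$ only at $s_i$ and $\mathcal C_2$ only at its pole: an intersection of two level curves of a harmonic function at a regular point forces the curves to coincide there, but $\mathcal C_1$ and each sheet-wise real segment of $\mathcal C_2$ again carry a real $\Phi'(z)\,dz$ (for $\mathcal C_2$ this uses $R(z)=a^2(z-x_1)(z-x_2)/z>0$ for $z>0$, so that $\lambda$, hence $\mu,\nu$ via Lemma~\ref{lem:spectral_factorization_intro}, hence $\Phi'$ are real on $(0,\infty)$ on both sheets), so they are not steepest arcs of $L_i$ and the only forced coincidences occur at zeros of $\Phi'$, i.e. at saddles; since the only saddle on $L_i$ is $s_i$ and all saddles sit on $\mathcal C_1$, this yields the claimed single intersections. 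Finally, ``no two loops intersect'' reduces, because descent loops and ascent loops already end at disjoint pairs of poles, to showing that $L_1,L_3$ do not share a pole and $L_2,L_4$ do not; this is the separation argument sketched in Figure~\ref{fig:path_steep_descent_ascent_impossible_case}: were, say, $L_2$ and $L_4$ to share a pole, the union of the four arcs would be a contractible curve cutting $\mathcal R$ into pieces, with $\mathcal C_2$ in one of them and $s_1,s_3$ in different ones, so some saddle would sit in a piece from which its descent arcs could not reach $\mathcal C_2$ without crossing the curve --- contradicting the non-crossing just proved.

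The step I expect to be the real obstacle is the non-crossing/confinement analysis, because $\Phi$ itself is multivalued (its logarithmic terms jump across cuts), so one cannot simply say that $\Im\Phi$ is constant along a steepest arc; everything has to be phrased through the single-valued $\Phi'$ and $\Re\Phi$ and the local structure of level lines of harmonic functions near regular points, near the four logarithmic poles, and near the branch points $0,\infty,x_1,x_2$. In particular, checking that \emph{both} cycles $\mathcal C_1$ and $\mathcal C_2$ carry a real $\Phi'(z)\,dz$ --- so that they act like steepest lines and pen the off-cycle arcs into the component of $\mathcal R\setminus(\mathcal C_1\cup\mathcal C_2)$ bounded by them --- is the technical heart, and the planar/separation argument on the genus-one surface $\mathcal R$ is the one genuinely topological point that needs to be made precise.
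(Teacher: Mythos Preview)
Your proposal is correct and follows essentially the same route as the paper: local structure at the simple saddles, the arcs of $\mathcal C_1$ between consecutive saddles being steepest, the off-cycle arcs having to terminate at poles (ruling out other saddles via $\Re\Phi(s_{1,3})<\Re\Phi(s_{2,4})$ and the global min/max observation), the sign of the logarithmic singularities \eqref{eq:logsingularitiesmin}--\eqref{eq:logsingularitiesplus} dispatching descent arcs to $\{0,\infty\}$ and ascent arcs to $\{(a^2)^{(1)},(1/a^2)^{(2)}\}$, confinement by the reality of $\Phi'(z)\,dz$ on $\mathcal C_1$ and $\mathcal C_2$, and finally the topological separation argument of Figure~\ref{fig:path_steep_descent_ascent_impossible_case}.

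The one genuine addition in your write-up is the explicit use of the anti-holomorphic involution $\iota$ to show that the two off-cycle arcs issuing from a given saddle are mirror images and hence land at the \emph{same} real pole, so that together they form a closed loop. The paper leaves this step implicit (it is tacitly assumed when the separation argument speaks of ``all these four paths together would form a closed loop''), so your version is in fact slightly more complete on this point. Conversely, the paper is a bit more explicit about the confinement: it notes that an off-cycle arc launched into the upper half plane of $\mathcal R_1$ stays in the union of that half plane with the lower half plane of $\mathcal R_2$, which in particular precludes winding around the poles; you might add a sentence to that effect, since your phrasing ``they are not steepest arcs of $L_i$'' for the cycles is a little oblique---the cleaner statement is that at a regular point the level set of $\Im\int^z\Phi'$ is a single analytic arc, so an off-cycle arc meeting a cycle there would have to coincide with it locally.
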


 See  Figure \ref{fig:path_steep_descent_ascent} and its caption for an illustration.

 \begin{figure}
    \begin{center}
        \begin{subfigure}{.4 \textwidth}
    \begin{tikzpicture}[scale=.5]
        \draw (0,0)--(2,2)--(12,2)--(10,0)--(0,0);
        \draw (0,-3)--(2,-1)--(12,-1)--(10,-3)--(0,-3);
        \draw[very thick] (1,1)--(4,1);
        \draw[very thick] (6,1)--(7,1);
        \draw[very thick] (6,-2)--(7,-2);
        \draw[very thick] (1,-2)--(4,-2);
        \draw[help lines,dashed] (4,1)--(4,-2);
        \draw[help lines,dashed] (6,1)--(6,-2);
        \draw[help lines,dashed] (7,1)--(7,-2);

        \draw[very thick, orange]plot [smooth cycle, tension=1.5] coordinates {(5,1) (7,1.5) (8,1) (6,.5)};
        \draw[very thick, blue] plot [smooth cycle, tension=1.2] coordinates {(5.5,1) (6.5,1.3) (7,1) (6,.7)};
        \draw[very thick, blue] plot [smooth, tension=1.2] coordinates {(3,2) (4.5,1) (1,0)};
        \draw[very thick,orange] plot [smooth, tension=1] coordinates {(5.75,1) (6,1.1) (6.5,1)};
        \draw[very thick,orange] plot [smooth, tension=1] coordinates {(5.75,1) (5.75,0.9) (6.5,1)};

        \draw[very thick,orange]plot [smooth, tension=1] coordinates {(6.5,-2) (9.5,-1.5) (10,-2)};
        \draw[very thick,orange]plot [smooth, tension=1] coordinates {(6.5,-2) (8.5,-2.5) (10,-2)};

        \filldraw (4.5,1) circle(.07);
        \filldraw (5,1) circle(.07);
        \filldraw (5.5,1) circle(.07);
        \filldraw (5.75,1) circle(.07);

        \filldraw[fill=white] (8,1) circle(.07);
        \filldraw[fill=white] (10,-2) circle(.07);
        \filldraw[fill=white] (7,1) circle(.07);
        \filldraw[fill=white] (7,-2) circle(.07);
        \filldraw[fill=white] (11,1) circle(.07);
        \filldraw[fill=white] (11,-2) circle(.07);
    \end{tikzpicture} 
    \caption{}
\end{subfigure}
\hfill
\begin{subfigure}{.4 \textwidth}
    \begin{tikzpicture}[scale=.5]
        \draw (0,0)--(2,2)--(12,2)--(10,0)--(0,0);
        \draw (0,-3)--(2,-1)--(12,-1)--(10,-3)--(0,-3);
        \draw[very thick] (1,1)--(3.5,1);
        \draw[very thick] (6,1)--(7,1);
        \draw[very thick] (6,-2)--(7,-2);
        \draw[very thick] (1,-2)--(3.5,-2);
        \draw[help lines,dashed] (3.5,1)--(3.5,-2);
        \draw[help lines,dashed] (6,1)--(6,-2);
        \draw[help lines,dashed] (7,1)--(7,-2);

        \draw[very thick, orange]plot [smooth cycle, tension=1.5] coordinates {(5,1) (7,1.5) (8,1) (6,.5)};
        \draw[very thick, blue] plot [smooth cycle, tension=1.2] coordinates {(5.5,1) (6.5,1.3) (7,1) (6,.7)};
        \draw[very thick, blue] plot [smooth, tension=1.2] coordinates {(3.5,2) (4.5,1) (1.5,0)};
        \draw[very thick,orange]plot [smooth, tension=1] coordinates {(4,1) (3.75,1.5) (2.5,1)};
        \draw[very thick,orange]plot [smooth, tension=1] coordinates {(4,1) (2.75,0.5) (2.5,1)};

        \draw[very thick,orange]plot [smooth, tension=1] coordinates {(2.5,-2) (4,-1.5) (10,-2)};
        \draw[very thick,orange]plot [smooth, tension=1] coordinates {(2.5,-2) (3,-2.5) (10,-2)};

        \filldraw (4.5,1) circle(.07);
        \filldraw (5,1) circle(.07);
        \filldraw (5.5,1) circle(.07);
        \filldraw (4,1) circle(.07);

        \filldraw[fill=white] (8,1) circle(.07);
        \filldraw[fill=white] (10,-2) circle(.07);
        \filldraw[fill=white] (7,1) circle(.07);
        \filldraw[fill=white] (7,-2) circle(.07);
        \filldraw[fill=white] (11,1) circle(.07);
        \filldraw[fill=white] (11,-2) circle(.07);
    \end{tikzpicture} 
    \caption{}
\end{subfigure}

    \begin{subfigure}{.4 \textwidth}
    \begin{tikzpicture}[scale=.5]
        \draw (0,0)--(2,2)--(12,2)--(10,0)--(0,0);
        \draw (0,-3)--(2,-1)--(12,-1)--(10,-3)--(0,-3);
        \draw[very thick] (1,1)--(4,1);
        \draw[very thick] (6,1)--(7,1);
        \draw[very thick] (6,-2)--(7,-2);
        \draw[very thick] (1,-2)--(4,-2);
        \draw[help lines,dashed] (4,1)--(4,-2);
        \draw[help lines,dashed] (6,1)--(6,-2);
        \draw[help lines,dashed] (7,1)--(7,-2);

        \draw[very thick, orange]plot [smooth cycle, tension=1.5] coordinates {(5,1) (7,1.5) (8,1) (6,.5)};
        \draw[very thick, blue] plot [smooth cycle, tension=1.2] coordinates {(5.5,1) (6.5,1.3) (7,1) (6,0.7)};
        \draw[very thick, blue] plot [smooth, tension=1.2] coordinates {(3.5,2) (4.5,1) (1.5,0)};
        \draw[very thick,orange]plot [smooth cycle, tension=1.2] coordinates {(5,-2) (8,-1.5) (10,-2) (7,-2.5)};

        \filldraw (4.5,1) circle(.07);
        \filldraw (5,1) circle(.07);
        \filldraw (5.5,1) circle(.07);
        \filldraw (5,-2) circle(.07);

        \filldraw[fill=white] (8,1) circle(.07);
        \filldraw[fill=white] (10,-2) circle(.07);
        \filldraw[fill=white] (7,1) circle(.07);
        \filldraw[fill=white] (7,-2) circle(.07);
        \filldraw[fill=white] (11,1) circle(.07);
        \filldraw[fill=white] (11,-2) circle(.07);
    \end{tikzpicture} \caption{}
\end{subfigure}\hfill 
\begin{subfigure}{.4 \textwidth} 
    \begin{tikzpicture}[scale=.5]
        \draw (0,0)--(2,2)--(12,2)--(10,0)--(0,0);
        \draw (0,-3)--(2,-1)--(12,-1)--(10,-3)--(0,-3);
        \draw[very thick] (1,1)--(4,1);
        \draw[very thick] (6,1)--(7,1);
        \draw[very thick] (6,-2)--(7,-2);
        \draw[very thick] (1,-2)--(4,-2);
        \draw[help lines,dashed] (4,1)--(4,-2);
        \draw[help lines,dashed] (6,1)--(6,-2);
        \draw[help lines,dashed] (7,1)--(7,-2);
        \draw[very thick, orange]plot [smooth cycle, tension=1.5] coordinates {(4.5,1) (7,1.5) (8,1) (6,.5)};
        \draw[very thick, blue] plot [smooth cycle, tension=1.2] coordinates {(5,1) (6.5,1.3) (7,1) (6,.7)};
        \draw[very thick, blue] plot [smooth, tension=1.2] coordinates {(3,-1) (4.5,-2) (1,-3)};
        \draw[very thick,orange]plot [smooth, tension=1] coordinates {(5.5,1) (6,1.1) (6.5,1)};
        \draw[very thick,orange]plot [smooth, tension=1] coordinates {(5.5,1) (5.75,0.9) (6.5,1)};

        \draw[very thick,orange]plot [smooth, tension=1] coordinates {(6.5,-2) (9.5,-1.5) (10,-2)};
        \draw[very thick,orange]plot [smooth, tension=1] coordinates {(6.5,-2) (8.5,-2.5) (10,-2)};

        \filldraw (4.5,-2) circle(.07);
        \filldraw (4.5,1) circle(.07);
        \filldraw (5,1) circle(.07);
        \filldraw (5.5,1) circle(.07);

        \filldraw[fill=white] (8,1) circle(.07);
        \filldraw[fill=white] (10,-2) circle(.07);
        \filldraw[fill=white] (7,1) circle(.07);
        \filldraw[fill=white] (7,-2) circle(.07);
        \filldraw[fill=white] (11,1) circle(.07);
        \filldraw[fill=white] (11,-2) circle(.07);
    \end{tikzpicture} 
    \caption{}
\end{subfigure}

    \begin{subfigure}{.4 \textwidth}
    \begin{tikzpicture}[scale=.5]
        \draw (0,0)--(2,2)--(12,2)--(10,0)--(0,0);
        \draw (0,-3)--(2,-1)--(12,-1)--(10,-3)--(0,-3);
        \draw[very thick] (1,1)--(3.5,1);
        \draw[very thick] (6,1)--(7,1);
        \draw[very thick] (6,-2)--(7,-2);
        \draw[very thick] (1,-2)--(3.5,-2);
        \draw[help lines,dashed] (3.5,1)--(3.5,-2);
        \draw[help lines,dashed] (6,1)--(6,-2);
        \draw[help lines,dashed] (7,1)--(7,-2);

        \draw[very thick, orange]plot [smooth cycle, tension=1.5] coordinates {(5,1) (7,1.5) (8,1) (6,.5)};
        \draw[very thick, blue] plot [smooth cycle, tension=1.2] coordinates {(5.5,-2) (6.5,-1.7) (7,-2) (6,-2.3)};
        \draw[very thick, blue] plot [smooth, tension=1.2] coordinates {(3.5,2) (4.5,1) (1.5,0)};
        \draw[very thick,orange]plot [smooth, tension=1] coordinates {(4,1) (3.75,1.5) (2.5,1)};
        \draw[very thick,orange]plot [smooth, tension=1] coordinates {(4,1) (2.75,0.5) (2.5,1)};

        \draw[very thick,orange]plot [smooth, tension=1] coordinates {(2.5,-2) (4,-1.25) (10,-2)};
        \draw[very thick,orange]plot [smooth, tension=1] coordinates {(2.5,-2) (3,-2.75) (10,-2)};

        \filldraw (4.5,1) circle(.07);
        \filldraw (5,1) circle(.07);
        \filldraw (5.5,-2) circle(.07);
        \filldraw (4,1) circle(.07);

        \filldraw[fill=white] (8,1) circle(.07);
        \filldraw[fill=white] (10,-2) circle(.07);
        \filldraw[fill=white] (7,1) circle(.07);
        \filldraw[fill=white] (7,-2) circle(.07);
        \filldraw[fill=white] (11,1) circle(.07);
        \filldraw[fill=white] (11,-2) circle(.07);
    \end{tikzpicture}
    \caption{}
\end{subfigure} \hfill   \begin{subfigure}{.4 \textwidth}
    \begin{tikzpicture}[scale=.5]
        \draw (0,0)--(2,2)--(12,2)--(10,0)--(0,0);
        \draw (0,-3)--(2,-1)--(12,-1)--(10,-3)--(0,-3);
        \draw[very thick] (1,1)--(3.5,1);
        \draw[very thick] (6,1)--(7,1);
        \draw[very thick] (6,-2)--(7,-2);
        \draw[very thick] (1,-2)--(3.5,-2);
        \draw[help lines,dashed] (3.5,1)--(3.5,-2);
        \draw[help lines,dashed] (6,1)--(6,-2);
        \draw[help lines,dashed] (7,1)--(7,-2);

        \draw[very thick, blue] plot [smooth, tension=1.2] coordinates {(3.5,2) (5,1) (1.5,0)};
        \draw[very thick,orange]plot [smooth, tension=1] coordinates {(4.5,1) (4,1.5) (2.5,1)};
        \draw[very thick,orange]plot [smooth, tension=1] coordinates {(4.5,1) (3.25,0.5) (2.5,1)};

        \draw[very thick,blue] plot [smooth, tension=1] coordinates {(4,1) (3.5,1.25) (3,1)};
        \draw[very thick,blue] plot [smooth, tension=1] coordinates {(4,1) (3.25,0.75) (3,1)};

        \draw[very thick,orange]plot [smooth, tension=1] coordinates {(2.5,-2) (4,-1.25) (10,-2)};
        \draw[very thick,orange]plot [smooth, tension=1] coordinates {(2.5,-2) (3,-2.75) (10,-2)};

        \draw[very thick,orange]plot [smooth, tension=1] coordinates {(4.5,-2) (5.75,-1.7) (6.5,-2)};
        \draw[very thick,orange]plot [smooth, tension=1] coordinates {(4.5,-2) (5.25,-2.3) (6.5,-2)};

        \draw[very thick,orange]plot [smooth, tension=1] coordinates {(6.5,1) (8,1.5) (8,1)};
        \draw[very thick,orange]plot [smooth, tension=1] coordinates {(6.5,1) (7,0.5) (8,1)};
        
        \draw[very thick, blue] plot [smooth cycle, tension=1] coordinates {(3,-2) (4.25,-1.5) (7,-2) (3.25,-2.5)};

        \filldraw (4.5,1) circle(.07);
        \filldraw (5,1) circle(.07);
        \filldraw (4.5,-2) circle(.07);
        \filldraw (4,1) circle(.07);

        \filldraw[fill=white] (8,1) circle(.07);
        \filldraw[fill=white] (10,-2) circle(.07);
        \filldraw[fill=white] (7,1) circle(.07);
        \filldraw[fill=white] (7,-2) circle(.07);
        \filldraw[fill=white] (11,1) circle(.07);
        \filldraw[fill=white] (11,-2) circle(.07);
    \end{tikzpicture}
    \caption{}
\end{subfigure} 

\begin{subfigure}{.4 \textwidth}
    \begin{tikzpicture}[scale=.5]
        \draw (0,0)--(2,2)--(12,2)--(10,0)--(0,0);
        \draw (0,-3)--(2,-1)--(12,-1)--(10,-3)--(0,-3);
        \draw[very thick] (1,1)--(4,1);
        \draw[very thick] (6,1)--(7,1);
        \draw[very thick] (6,-2)--(7,-2);
        \draw[very thick] (1,-2)--(4,-2);
        \draw[help lines,dashed] (4,1)--(4,-2);
        \draw[help lines,dashed] (6,1)--(6,-2);
        \draw[help lines,dashed] (7,1)--(7,-2);

        \draw[very thick, orange]plot [smooth cycle, tension=1.5] coordinates {(5,1) (7,1.5) (8,1) (6,.5)};
        \draw[very thick, blue] plot [smooth cycle, tension=1.2] coordinates {(5.5,-2) (6.5,-1.7) (7,-2) (6,-2.3)};
        \draw[very thick, blue] plot [smooth, tension=1.2] coordinates {(3,2) (4.5,1) (1,0)};
        \draw[very thick,orange]plot [smooth cycle, tension=1.2] coordinates {(5,-2) (8,-1.25) (10,-2) (7,-2.75)};

        \filldraw (4.5,1) circle(.07);
        \filldraw (5,1) circle(.07);
        \filldraw (5.5,-2) circle(.07);
        \filldraw (5,-2) circle(.07);

        \filldraw[fill=white] (8,1) circle(.07);
        \filldraw[fill=white] (10,-2) circle(.07);
        \filldraw[fill=white] (7,1) circle(.07);
        \filldraw[fill=white] (7,-2) circle(.07);
        \filldraw[fill=white] (11,1) circle(.07);
        \filldraw[fill=white] (11,-2) circle(.07);
    \end{tikzpicture} 
    \caption{}
\end{subfigure}

        \caption{The seven  pictures illustrate  the possible locations (schematically) of the paths of steepest descent and ascent leaving from the  four saddle points on the cycle $\mathcal C_1$ in the smooth region. In (a) and (b) we have all four saddle points on the first sheet, in pictures (c)--(f) we have three saddle points on the first sheet and  in picture (g) we have one point on the first sheet.  It is also possible that all four saddle points are on the second sheet, and in that case the picture is similar to that of (a) and (b) with the two sheets switched (but keeping the poles $a^{\pm 2}$ in place and slightly adjusting the contours accordingly).   Similarly, for the case of three saddle point on the second sheet. 
        All pictures can be reconstructed started from the picture in (a) by continuous deformations. For example, (b) can be obtained by moving the right most saddle point (and the orange contour) in (a) over the cycle $\mathcal C_1$, first passing  the branch point $x_1$ to the second sheet and then passing the branch point $x_2$ back to the first sheet to become the left most saddle point at (b). The pictures (c) and (d) can be obtained from (a) by moving the right most and the left most points respectively to the second sheet, etc.
        We did not check whether all configurations indeed occur and perhaps some cases can be excluded, but our arguments hold for any of the above configurations.}
        \label{fig:path_steep_descent_ascent}
    \end{center}
\end{figure}
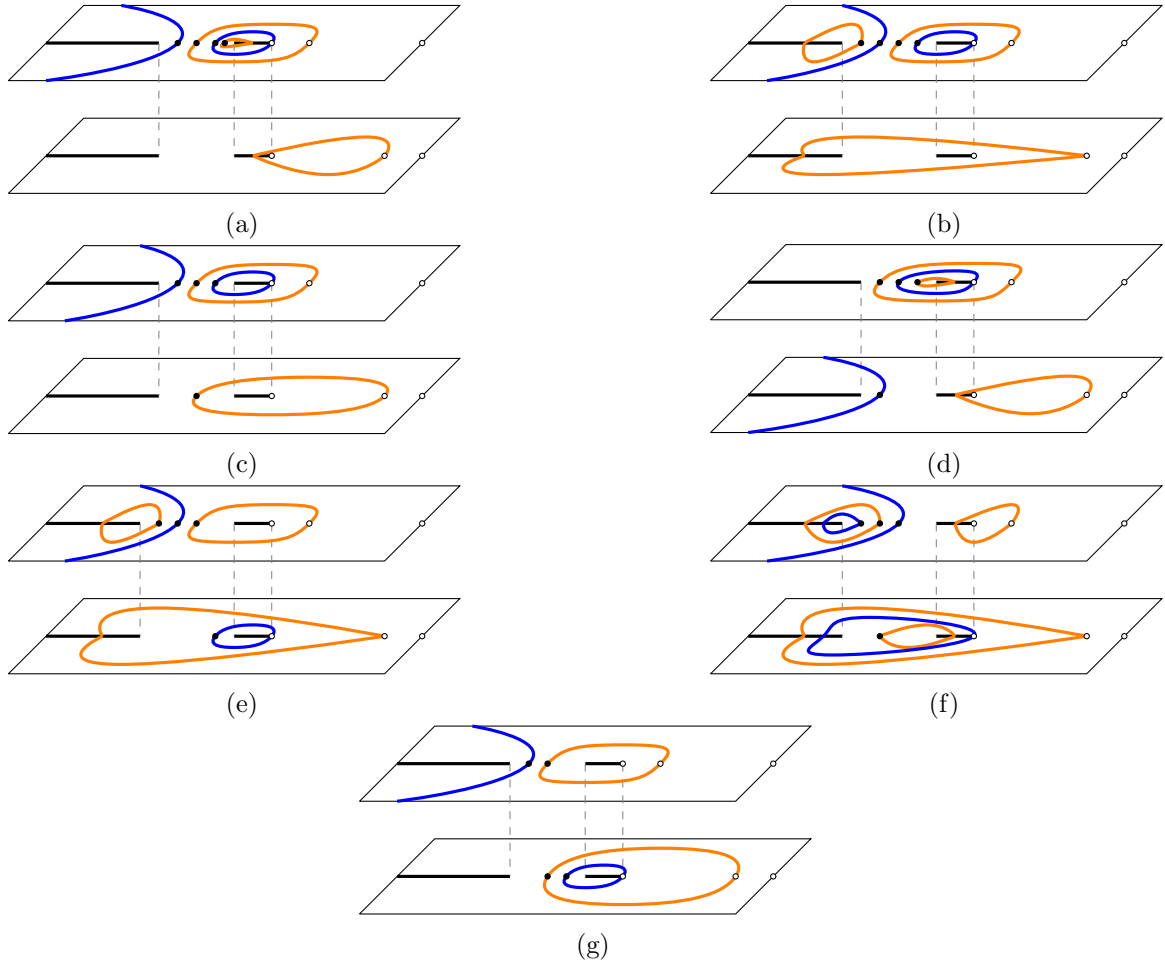

\subsubsection{Proof of Theorem \ref{thm:gas}} 

Now we are ready for the 
\begin{proof}[Proof of Theorem \ref{thm:gas}]
The starting point is the representation of the kernel after the preliminary deformation as given in \eqref{eq:correlationkernelmainperiodiceigenvaluesafterdeforming}. To prove the result, all that is needed is to show that the double integral tends to zero as $N \to \infty$. This is rather straightforward after one has realized that the contours of the preliminary deformation strongly resemble the paths of steepest descent and ascent for the saddle point $s_j$. Indeed, the two contours  $\tilde \gamma_1^{(1)}$ and $\gamma_1^{(2)}$ can be deformed to go through the saddle points $s_1$ and $s_3$ and follow the paths of steepest descent, and the contours $\gamma_2^{(1)}$ and $\gamma_2^{(2)}$ can be deformed to the path of steepest ascent ending in $z=(a^2)^{(1)}$ and $z=(1/a^2)^{(2)}$ respectively. During this deformation, no additional residues are being picked  up, and standard saddle point arguments show that there exists $c>0$ such that 
$$
\oint_{\tilde \gamma_1^{(1)} \cup \gamma_1^{(2)}} \oint_{\gamma_2^{(1)}\cup \gamma_2^{(2)}}= \mathcal O(\exp(-N c)), 
$$
as $N \to \infty$. This finishes the proof. \end{proof}

\subsection{Proof of Proposition \ref{prop:phiprime}} \label{sec:proofphi} 
Before we come to the proof of Proposition \ref{prop:phiprime} we need a few lemmas. We use the notation $\lfloor x \rfloor$ for the largest integer smaller than $x$.
\begin{lemma}
    There exists polynomials $p,\tilde p$ with real coefficients and of degree at most $\lfloor \tfrac{d}{2} \rfloor$, and polynomials $q,\tilde q$ with real coefficients, of degree  at most $\lfloor \tfrac{d-1}{2} \rfloor$  and $q(0)=\tilde q(0)=0$, such that 
    \begin{align}
        \nu(z)&=p(z)+q(z)(R(z))^{1/2}, \label{eq:formnu}\\
        \mu(z)&=\tilde p(1/z)+\tilde q (1/z) (R(z))^{1/2}, \label{eq:formmu}
    \end{align}
    where $R(z)$ is as in \eqref{eq:defR} and the square root $(R(z))^{1/2}$ is such that $(R(z))^{1/2}$ for $z>0$ on $\mathcal R_1$.
\end{lemma}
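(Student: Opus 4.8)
The plan is to use that, by Lemma~\ref{lem:spectral_factorization_intro}, $\mu$ and $\nu$ are meromorphic functions on the hyperelliptic surface $\mathcal R$, which is the double cover of $\mathbb P^1$ via the $z$-coordinate, branched at $x_1,x_2,0,\infty$, with function field $\mathbb C(z)\big[(R(z))^{1/2}\big]$. Every meromorphic $f$ on $\mathcal R$ therefore has a unique expression $f(z)=A(z)+B(z)(R(z))^{1/2}$ with $A,B$ rational in $z$, obtained by symmetrizing over the two sheets: $A=\tfrac12\big(f|_{\mathcal R_1}+f|_{\mathcal R_2}\big)$ and $B(R)^{1/2}=\tfrac12\big(f|_{\mathcal R_1}-f|_{\mathcal R_2}\big)$. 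Applying this to $\nu$ gives $\nu=p+q\,(R)^{1/2}$ with $p=\tfrac12\Tr P^{(d)}_+(z)$ and $q\,(R)^{1/2}=\pm\tfrac12\big((\Tr P^{(d)}_+(z))^2-4\det P^{(d)}_+(z)\big)^{1/2}$, and analogously $\mu=\tilde p_0(z)+\tilde q_0(z)(R)^{1/2}$ with $\tilde p_0=\tfrac12\Tr P^{(d)}_-(z)$. Since every factor $P_{j,+}(z)$ is affine in $z$, $P^{(d)}_+(z)$ has polynomial entries, so $p$ is a polynomial in $z$; and since in every factor $P_{j,-}(z)$ only the $(2,1)$-entry carries a $1/z$, $P^{(d)}_-(z)$ has entries that are polynomials in $1/z$, so $\tilde p_0(z)=\tilde p(1/z)$ for a polynomial $\tilde p$. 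All these objects have real coefficients because the functions $a(\sigma^j(x,y))$, $b(\sigma^j(x,y))$, $d(\sigma^j(x,y))$ are real.

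Next I would show that $q$ (and $\tilde q_0$) is a polynomial by ruling out finite poles. At any finite $z\notin\{x_1,x_2,0\}$ both points of $\mathcal R$ over $z$ are regular, $\nu$ is holomorphic there (its only pole is at $\infty$, by Lemma~\ref{lem:spectral_factorization_intro}), and $(R(z))^{1/2}$ is holomorphic and non-vanishing, so $q=(\nu-p)/(R)^{1/2}$ is holomorphic. At the branch points $x_1,x_2$ the difference $\nu-p=\tfrac12\big(\nu|_{\mathcal R_1}-\nu|_{\mathcal R_2}\big)$ is holomorphic and odd under the sheet involution, hence vanishes to at least first order in the local parameter, which cancels the simple zero of $(R)^{1/2}$; at $z=0$ the same oddness gives $\nu-p=O(z^{1/2})$ while $(R)^{1/2}$ has a simple pole in $z^{1/2}$, so $q=O(z)$, which also yields $q(0)=0$. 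Thus $q$ is entire, i.e.\ a polynomial vanishing at $0$. For $\mu$ the picture is the mirror image: $\mu$ is holomorphic on $\mathbb C\setminus\{0\}$ with a pole of order $d$ at $z=0$, so $\tilde q_0$ is holomorphic on $\mathbb C\setminus\{0\}$, and at $z=\infty$ oddness gives $\mu-\tilde p_0=O(z^{-1/2})$ while $(R)^{1/2}\sim a\,z^{1/2}$, so $\tilde q_0(z)\to 0$ as $z\to\infty$; hence $\tilde q_0(z)=\tilde q(1/z)$ with $\tilde q(0)=0$.

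The degree bounds I would extract from the behaviour at the opposite branch point. For $\nu$, near $z=\infty$ one has $(R(z))^{1/2}=a\,z^{1/2}\big(1+O(1/z)\big)$, and in the local parameter $z^{-1/2}$ the function $\nu$ has a pole of order $d$; since $p$ is the even part of this local expansion it has degree at most $\lfloor d/2\rfloor$ in $z$, and $\nu-p$ is the odd part, whose pole order in $z^{-1/2}$ is at most $d$ (it is $d$ when $d$ is odd and at most $d-1$ when $d$ is even, since the top coefficient of an even-order pole cancels), so that $q=(\nu-p)/(R)^{1/2}$ has degree at most $\lfloor (d-1)/2\rfloor$ in $z$. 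The same estimate at $z=0$, where $\mu$ has a pole of order $d$, gives $\deg\tilde p\le\lfloor d/2\rfloor$ and $\deg\tilde q\le\lfloor (d-1)/2\rfloor$ as polynomials in $1/z$. Finally, on the interval $(x_1,x_2)$ one has $R>0$ and $\nu_{1,2},\mu_{1,2}$ real (as used in the proof of Proposition~\ref{prop:four_saddles}), so $p,q,\tilde p,\tilde q$ take real values there and therefore have real coefficients.

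The only slightly delicate point is the degree bookkeeping at $\infty$ (and at $0$): one has to keep careful track of which powers of the half-integer local parameter survive the symmetrization, and of the shift by one caused by the parity of $d$. Everything else is the standard dictionary between meromorphic functions on a hyperelliptic curve and pairs of rational functions of the base coordinate.
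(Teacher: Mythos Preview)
Your argument is correct. Both you and the paper start from the same observation that $p=\tfrac12\Tr P_+^{(d)}$ and that the matrix entries of $P_+^{(d)}$ are polynomials in $z$ (respectively in $1/z$ for $P_-^{(d)}$). The difference lies in how the degree bounds and the polynomiality of $q$ are obtained. The paper argues combinatorially: it notes that in the product $\prod_j(A_{j,1}+zA_{j,2})$ the off-diagonal $z$-carrying factors satisfy $A_{j,2}A_{j+1,2}=\mathbb O$, which caps the degree of the trace at $\lfloor d/2\rfloor$; then it uses the determinant identity $\det P_+^{(d)}=p^2-Rq^2$ together with $\deg\det P_+^{(d)}=d$ to force $Rq^2$ to be a polynomial of degree at most $d$, giving both $\deg q\le\lfloor(d-1)/2\rfloor$ and $q(0)=0$ from the simple pole of $R$ at $0$. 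You instead read everything off the Riemann surface: you invoke the pole/zero data of $\nu$ from Lemma~\ref{lem:spectral_factorization_intro}, do a local parity analysis in the uniformizer $z^{\pm 1/2}$ at the branch points $0$ and $\infty$, and extract the degree bounds from the order of the pole of $\nu$ at $\infty$. Your route is cleaner conceptually and would generalize to situations where the explicit nilpotency structure of the factors is less transparent; the paper's route is more elementary and self-contained, not needing the prior identification of the divisor of $\nu$ on $\mathcal R$.
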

\begin{proof}
    %First note that $E(z)$ as in  \eqref{eq:general_eigenvectors_intro} has the %inverse
    %$$
    %  E(z)^{-1}=\frac{1}{(\lambda_2(z)-\lambda_1(z))\alpha a (1+z)} 
    %  \begin{pmatrix} 
    %    \lambda_2(z)-\alpha (a^2+1) & -\alpha a(1+z)\\
    %     \alpha(a^2+1)-\lambda_1(z)&  \alpha a(1+z)
    %  \end{pmatrix}.
    %$$
    From \eqref{eq:lambda_curve} and \eqref{eq:munuinE} we then find that 
    $$
        \nu(z)=p(z)+q(z)(R(z))^{1/2},
    $$    
    for some rational functions $p(z)$ and $q(z)$ with real coefficients. It remains to show that $p$ and $q$  are in fact polynomials in $z$ of said degree. 
    
    By computing the trace of $P_+^{(d)}(z)$ we have
    $$
      \Tr P_+^{(d)}(z)= \nu_1(z)+\nu_2(z)=2 p(z),
    $$
    and thus $p(z)$ is a polynomial. The degree of $\Tr P_+^{(d)}(z)$ can also be estimated from above. Indeed, for any  matrices $A_{j,1},A_{j,2}$ for $j=1, \ldots,d$ of the same dimensions such that $$A_{j,2}A_{j+1,2}=\mathbb O,\qquad j=1,\ldots, d-1,$$ we have that 
    $$
      \Tr \prod_{j=1}^d(A_{j,1}+z A_{j,2})
    $$
    is a polynomial of degree at most $\lfloor \tfrac d2\rfloor$. In the case of $P_+^{(d)}$, we have $A_{j,2}= c_j\begin{pmatrix} 0 & 1 \\  0 & 0 \end{pmatrix}$  for some constant $c_j$, and this shows that $ p(z)$ has degree at most $\lfloor \tfrac d2\rfloor$ as stated. 

    Finally, let us consider  $q(z)$.  We have
    $$
    \det P_+^{(d)}(z)= \nu_1(z)\nu_2(z)=p(z)^2+R(z)(q(z))^2.
    $$
    Since the left-hand is a polynomial of degree $d$, and $p(z)^2$ is a polynomial of degree at most $d$, $R(z)(q_2(z))^2$ is a polynomial of degree $d$. Hence,  the rational function $q$ must be a polynomial of degree at most $\lfloor \tfrac{d-1}{2}\rfloor$. Moreover, since $R(z)$ has a simple pole at $z=0$, the polynomial $q(z)$ must have a zero at $z=0$.

    The statement for $\mu$ follows in the same way. 
\end{proof}
\begin{lemma} \label{lem:maxmod}
    We have $|\lambda_1(z)|> |\lambda_2(z)|$,  $|\mu_1(z)|> |\mu_2(z)|$, and $|\nu_1(z)|> |\nu_2(z)|$  for $z\in\mathbb C\setminus\left((-\infty,x_1)\cup (x_2,0]\right)$.
\end{lemma}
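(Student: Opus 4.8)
The plan is to prove the three inequalities in parallel, since they all reduce to the same structural fact about the two-sheeted surface $\mathcal{R}$ and the behaviour of the relevant meromorphic functions at the branch points $0$ and $\infty$. First I would fix the branch: by construction $R(z)^{1/2}$ is meromorphic on $\mathcal{R}$ with $R(z^{(1)})^{1/2}>0$ for $z>0$, so on the cut domain $\mathbb{C}\setminus\bigl((-\infty,x_1)\cup(x_2,0]\bigr)$ the two sheets give $\lambda_1(z)-\lambda_2(z)=R(z)^{1/2}$, $\mu_1(z)-\mu_2(z)=\tilde q(1/z)R(z)^{1/2}$, and $\nu_1(z)-\nu_2(z)=q(z)R(z)^{1/2}$ in the notation of the preceding lemma. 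The key observation is that $\lambda_1\lambda_2=\det P(z)=(1-a^2z)(1-a^2/z)$, $\mu_1\mu_2=\mathrm{const}\cdot(z-a^{-2})^d/z^d$, $\nu_1\nu_2=\mathrm{const}\cdot(z-a^2)^d$, and $\mu_1+\mu_2=2\tilde p(1/z)$, $\nu_1+\nu_2=2p(z)$ are all single-valued; in particular each pair $\{\lambda_1,\lambda_2\}$, $\{\mu_1,\mu_2\}$, $\{\nu_1,\nu_2\}$ consists of the two roots of a quadratic with single-valued coefficients, and the ratio $\lambda_2/\lambda_1$ (resp. $\mu_2/\mu_1$, $\nu_2/\nu_1$) is therefore a single-valued meromorphic function on the cut plane.

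Next I would run a maximum-modulus / argument-principle argument on the ratio $f(z)=\lambda_2(z)/\lambda_1(z)$. It is holomorphic and nonzero on the cut plane: $\lambda_1$ has its only zeros at $(a^{\pm2})^{(2)}$, i.e.\ on the second sheet, hence $\lambda_1(z)\neq 0$ for $z$ on the first sheet away from the cuts, and $\lambda_2$ is just $\lambda$ on the second sheet. Then I would check the boundary values: on the two cuts $(-\infty,x_1)$ and $(x_2,0)$ the sheets are interchanged, so $R^{1/2}$ changes sign and $\lambda_{1,\pm}(z)=\lambda_{2,\mp}(z)$; since $\lambda_1,\lambda_2$ are complex conjugates there (the quadratic has real coefficients and negative discriminant on the cuts), $|\lambda_1|=|\lambda_2|$, i.e.\ $|f(z)|=1$ on the cuts. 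At $z=0$ and $z=\infty$, $\lambda_1$ has a pole while $\lambda_2$ has a zero (from $\lambda_1\lambda_2=(1-a^2z)(1-a^2/z)$ together with $\lambda$ having poles at $0,\infty$ on sheet $\mathcal{R}_1$), so $f(z)\to 0$; and on the positive real axis $f(z)<1$ by the chosen branch. By the maximum modulus principle applied to the bounded holomorphic function $f$ on the cut plane (boundary consisting of the two cuts where $|f|=1$, plus neighbourhoods of $0$ and $\infty$ where $|f|\to 0$), we get $|f(z)|<1$ everywhere in the interior, which is the claim $|\lambda_1(z)|>|\lambda_2(z)|$. The arguments for $\mu_2/\mu_1$ and $\nu_2/\nu_1$ are identical: by Lemma \ref{lem:spectral_factorization_intro}, $\mu$ has a zero at $(a^2)^{(2)}$ of order $d$ and a pole at $0$ of order $d$ (all on or reachable from the second sheet / at $z=0$), while $\nu$ has a zero at $(a^{-2})^{(2)}$ of order $d$ and a pole at $\infty$ of order $d$, so $\mu_1$ (resp.\ $\nu_1$) is zero-free on the cut plane, $\mu_2/\mu_1$ (resp.\ $\nu_2/\nu_1$) vanishes at $0$ (resp.\ $\infty$), has modulus $1$ on the cuts, and is $<1$ on the positive axis.

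The main obstacle I anticipate is making the maximum-modulus argument fully rigorous near the exceptional points $0$ and $\infty$, and near the branch points $x_1,x_2$ where $R^{1/2}$ vanishes: one must verify that $f$ extends continuously (with $|f|<1$) across a neighbourhood of $x_1$ and $x_2$ rather than only being defined on the slit plane, and that the poles of $\lambda_1$ at $0,\infty$ genuinely force $f\to 0$ there with no competing pole of $\lambda_2$. The cleanest route is to avoid cutting at all: observe that $f$ is in fact a single-valued meromorphic function on the \emph{Riemann sphere} obtained by noting that the two cuts are where the sheets glue, so $\lambda_2/\lambda_1$ descends to a rational function of $z$ of degree one in each of $0$ and $\infty$; then a direct computation identifies it, and $|f|=1$ on the cuts plus $|f|<1$ at one interior point gives the strict inequality globally by the open mapping theorem. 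I would carry out that identification explicitly for $\lambda$ (it is a Möbius-type expression) and then note that $\mu,\nu$ divide a power of $\lambda$ via \eqref{eq:spectralfactorization_intro}, so the same conclusion propagates with essentially no extra work.
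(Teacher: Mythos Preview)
Your overall strategy --- apply the maximum modulus principle to the ratio $f=\lambda_2/\lambda_1$ (resp.\ $\mu_2/\mu_1$, $\nu_2/\nu_1$) on the cut plane, using that $|f|=1$ on the cuts --- is exactly the paper's approach. However, there is a genuine error in your identification of where $|f|<1$ is forced.

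You claim that at $z=0$ and $z=\infty$ ``$\lambda_1$ has a pole while $\lambda_2$ has a zero \ldots\ so $f(z)\to 0$'', and similarly that $\mu_2/\mu_1$ vanishes at $0$ and $\nu_2/\nu_1$ vanishes at $\infty$. This is wrong: $0$ and $\infty$ are \emph{branch points} of $\mathcal R$, shared by both sheets, so both $\lambda_1$ and $\lambda_2$ blow up there. Concretely, $\lambda_1+\lambda_2$ is constant while $\lambda_1\lambda_2=(1-a^2z)(1-a^2/z)\sim -a^2z$ as $z\to\infty$, forcing $\lambda_{1,2}\sim\pm a z^{1/2}$ and hence $\lambda_2/\lambda_1\to -1$, not $0$. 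The same happens at $z=0$, and analogously $\mu_2/\mu_1$ has a removable singularity at $0$ (with value of modulus $1$) and tends to $1$ at $\infty$; likewise for $\nu$. So the boundary data for the maximum principle is $|f|=1$ everywhere on the boundary of the cut plane, including the points $0$ and $\infty$, and from this alone you cannot conclude $|f|<1$ in the interior.

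The missing ingredient is the \emph{interior zero}: by Lemma~\ref{lem:spectral_factorization_intro}, $\lambda$ has zeros at $(a^{\pm2})^{(2)}$, $\mu$ at $(a^2)^{(2)}$, $\nu$ at $(a^{-2})^{(2)}$; hence $\lambda_2(a^2)=\mu_2(a^2)=0$ and $\nu_2(a^{-2})=0$ while the corresponding $\lambda_1,\mu_1,\nu_1$ do not vanish there. Thus $f$ is holomorphic on the cut plane (after removing the singularity at $0$ or $\infty$), has $|f|\le 1$ on the boundary, and has an interior zero, so by the maximum modulus principle $|f|<1$ throughout the interior. This is precisely the paper's argument. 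Your fallback claim that $f$ ``descends to a rational function of $z$'' is also incorrect: $\lambda_2/\lambda_1=(2c_1-\sqrt{R})/(2c_1+\sqrt{R})$ genuinely depends on the sign of $\sqrt{R}$ and is not single-valued on the Riemann sphere.
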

\begin{proof}
    The proof is the same for all three cases, so we only prove that $|\mu_1(z)|>|\mu_2(z)|$. 
    To this end, we note that $\mu_2(z)/\mu_1(z)$ is analytic on $\mathbb C\setminus\left((-\infty,x_1]\cup [x_2,0]\right)$. It has a zero at $z=a^2$ and a possible pole at $z=0$. However, from \eqref{eq:formmu} it follows that the singularity at $z=0$ is removable. Moreover, $\mu_2(z)/\mu_1(z) \to 1$ as $z \to \infty$. From \eqref{eq:formmu} it also follows that $|\mu_2(z)/\mu_1(z)|=1$ for $z\in (-\infty,x_1) \cup (x_2,0)$. By the maximum modulus principle we must have either $|\mu_2(z)/\mu_1(z)|>1$ or $|\mu_2(z)/\mu_1(z)|<1$, for $z\in\mathbb C\setminus\left((-\infty,x_1)\cup (x_2,0]\right)$. Since $\mu_2(a^2)=0$, we conclude that  $|\mu_2(z)/\mu_1(z)|<1$.
\end{proof}
We also need the behavior of $\mu$ near the branch point at $\infty$. 
\begin{lemma}
   With 
    \begin{equation} \label{eq:irrelevant_constant}
    \Pi=\prod_{j=0}^{d-1} a(\sigma^j(x,y)) b(\sigma^j(x,y))
    \end{equation}
    we have
    \begin{equation}\label{eq:mu_asymptotic}
    \mu_{1}(z)=\Pi
    \left(
        1+ {\frac{a}{z^{1/2}}} 
        \left( 
            \sum_{j=0}^{d-1} a(\sigma^j(x,y)) \sum_{k=0}^{d-1} \frac{1}{a(\sigma^k(x,y))}
        \right)^{1/2}
        \right)+\mathcal O(z^{-1}), 
    \end{equation}
    and
    $$
    \mu_{2}(z)=\Pi
    \left(
        1-{\frac{a}{z^{1/2}}} 
        \left( 
            \sum_{j=0}^{d-1} a(\sigma^j(x,y)) \sum_{k=0}^{d-1} \frac{1}{a(\sigma^k(x,y))}
        \right)^{1/2}
        \right)+\mathcal O(z^{-1}), 
    $$
    as $z\to \infty$ along the positive real axis, and the square root is taken such that  $z^{1/2}>0$. 
\end{lemma}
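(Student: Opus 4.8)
The plan is to extract $\mu_1$ and $\mu_2$ directly from the trace and determinant of the explicit product defining $P^{(d)}_-(z)$, using that $z=\infty$ is a branch point of $\mathcal R$ at which $\mu_1(\infty)=\mu_2(\infty)=\Pi$, so that the splitting of the two eigenvalues is governed by a term of order $z^{-1/2}$.

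First I would record the shape of the factors. Writing $a_j:=a(\sigma^j(x,y))$ and $b_j:=b(\sigma^j(x,y))$, the factors appearing in \eqref{eq:pmin} collapse to $P_{j,-}(z)=b_jM_j(z)$ with
\[
 M_j(z)=\begin{pmatrix} a_j & 0\\ 0& 1\end{pmatrix}\begin{pmatrix} 1&1\\ a^2/z & 1\end{pmatrix}\begin{pmatrix}1&0\\0& a_j\end{pmatrix}=A_j+\tfrac{a^2}{z}B,\qquad A_j=a_j\begin{pmatrix}1& a_j\\0&1\end{pmatrix},\quad B=\begin{pmatrix}0&0\\1&0\end{pmatrix},
\]
so $P^{(d)}_-(z)=\big(\textstyle\prod_j b_j\big)\,M_0(z)\cdots M_{d-1}(z)$ and $\mu_{1,2}(z)$ are its eigenvalues. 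Since $\det M_j(z)=a_j^2(1-a^2/z)$, multiplicativity of the determinant gives the exact identity $\det P^{(d)}_-(z)=\Pi^2(1-a^2/z)^d$ with $\Pi$ as in \eqref{eq:irrelevant_constant}. For the trace I would expand the matrix product to first order in $1/z$: $M_0(z)\cdots M_{d-1}(z)=\prod_j A_j+\tfrac{a^2}{z}\sum_{j}A_0\cdots A_{j-1}BA_{j+1}\cdots A_{d-1}+O(z^{-2})$. The unipotent parts of the $A_j$ commute, so $\prod_j A_j=\big(\prod_j a_j\big)\bigl(\begin{smallmatrix}1&\sum_j a_j\\0&1\end{smallmatrix}\bigr)$ has trace $2\prod_j a_j$; and by cyclicity of the trace $\Tr\big(A_0\cdots A_{j-1}BA_{j+1}\cdots A_{d-1}\big)=\big(\prod_{i\neq j}a_i\big)\big(\sum_{i\neq j}a_i\big)$.

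The one genuinely algebraic step is then to simplify this sum. Using $\prod_{i\neq j}a_i=(\prod_i a_i)/a_j$ one gets, with $S:=\sum_j a_j$ and $\Sigma:=\sum_k a_k^{-1}$,
\[
 \sum_j\Big(\prod_{i\neq j}a_i\Big)\Big(\sum_{i\neq j}a_i\Big)=\Big(\prod_i a_i\Big)\sum_j\frac{S-a_j}{a_j}=\Big(\prod_i a_i\Big)(S\Sigma-d),
\]
so $\Tr P^{(d)}_-(z)=2\Pi+\tfrac{a^2}{z}\Pi(S\Sigma-d)+O(z^{-2})$. Combining with $\det P^{(d)}_-(z)=\Pi^2(1-a^2/z)^d$, the $O(1)$ parts of $\big(\Tr P^{(d)}_-(z)\big)^2$ and of $4\det P^{(d)}_-(z)$ both equal $4\Pi^2$ and cancel (this is the branch‑point collision), and the two $\mp d$ contributions — one from $\Tr$, one from expanding $(1-a^2/z)^d$ — cancel as well, leaving
\[
 \big(\Tr P^{(d)}_-(z)\big)^2-4\det P^{(d)}_-(z)=\frac{4a^2}{z}\,\Pi^2\,S\Sigma+O(z^{-2}).
\]
Taking the branch with $z^{1/2}>0$ gives $\sqrt{(\Tr)^2-4\det}=\tfrac{2a}{z^{1/2}}\Pi(S\Sigma)^{1/2}+O(z^{-1})$, hence $\mu_{1,2}(z)=\tfrac12\big(\Tr\pm\sqrt{(\Tr)^2-4\det}\big)=\Pi\big(1\pm\tfrac{a}{z^{1/2}}(S\Sigma)^{1/2}\big)+O(z^{-1})$, which is the asserted formula since $S\Sigma=\big(\sum_j a(\sigma^j(x,y))\big)\big(\sum_k a(\sigma^k(x,y))^{-1}\big)$. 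The assignment of signs is forced by Lemma \ref{lem:maxmod}: on the positive real axis $|\mu_1(z)|>|\mu_2(z)|$ while $\Pi(S\Sigma)^{1/2}>0$, so $\mu_1$ carries the $+$ sign and $\mu_2$ the $-$.

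The main obstacle is minor: only the identification of the $1/z$-coefficient of $(\Tr)^2-4\det$ and the cancellation producing the clean product $S\Sigma$ require any care; everything else is bookkeeping with unipotent $2\times2$ matrices, and the $z^{-1/2}$ scaling is the expected splitting of a perturbed Jordan block. (Alternatively, \eqref{eq:formmu} already exhibits the $(R(z))^{1/2}\sim a z^{1/2}$ structure, and the computation above just pins down the relevant leading coefficient.)
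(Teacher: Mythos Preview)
Your proof is correct and follows essentially the same approach as the paper's: expand $P_-^{(d)}(z)$ to first order in $1/z$, compute the trace and determinant (obtaining exactly the same expressions $\Tr=2\Pi+\tfrac{a^2}{z}\Pi(S\Sigma-d)+O(z^{-2})$ and $\det=\Pi^2(1-a^2/z)^d$), solve the quadratic, and fix the sign via $|\mu_1|>|\mu_2|$. Your presentation is in fact slightly more streamlined, making the cancellation that produces the clean product $S\Sigma$ in $(\Tr)^2-4\det$ explicit.
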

\begin{proof}
    A simple computation gives
\begin{multline*}
 P_-^{(d)}(z)=\Pi 
 \left( 
     \begin{pmatrix} 
        1 & \sum_{j=0}^{d-1} a(\sigma^j(x,y)) \\ 
        0 & 1 
    \end{pmatrix} 
\right.\\
\left.+ \frac{a^2}{z} \sum_{k=0}^{d-1} 
    \begin{pmatrix} 
        1 & \sum_{j=0}^{k-1} a(\sigma^j(x,y)) \\ 
        0 & 1 
    \end{pmatrix} 
    \begin{pmatrix} 
        0 & 0\\ 
        a(\sigma_k(x,y))^{-1} & 0 
    \end{pmatrix}
    \begin{pmatrix} 
        1 & \sum_{j=k+1}^{d-1} a(\sigma^j(x,y)) \\ 
        0 & 1 
    \end{pmatrix}+ \mathcal O(z^{-2})
    \right),
\end{multline*}
as $z \to \infty$. Hence,
\begin{multline*}
\Tr P_-^{(d)}(z)= \Pi \left(
    2+ \frac{a^2}{z} \sum_{k=0}^{d-1}\sum_{j=0, j\neq k}^{d-1} \frac{a(\sigma^j(x,y))}{a(\sigma^k(x,y))} 
    +\mathcal O(z^{-2})
    \right)\\
    = \Pi \left(
    2+ \frac{a^2}{z} \left( \sum_{j=0}^{d-1} a(\sigma^j(x,y)) \sum_{k=0}^{d-1} \frac{1}{a(\sigma^k(x,y))} -d\right)
    +\mathcal O(z^{-2})
    \right),
\end{multline*}
as $z \to \infty$. Since $\det P_-^{(d)}(z)=\Pi^2 (1-a^2/z)^d$, we find 
$$
    \mu_{1,2}=\Pi
        \left(
            1\pm {\frac{a}{z^{1/2}}} 
            \left( 
                \sum_{j=0}^{d-1} a(\sigma^j(x,y)) \sum_{k=0}^{d-1} \frac{1}{a(\sigma^k(x,y))}
            \right)^{1/2}
            \right)+\mathcal O(z^{-1}), 
$$
as $z \to \infty$. It remains to determine whether $\mu_1$ or $\mu_2$ comes with the plus sign. 
Since $\mu_1(z)>\mu_2(z)$, we see that $\mu_1$ comes with the plus sign and $\mu_2$ with the minus sign.
\end{proof}

Now we are ready for the 
\begin{proof}[Proof of Proposition \ref{prop:phiprime}] 
    By \eqref{eq:formnu} we have 
    $$
        \nu_1(z)=p(z)+q(z) \sqrt{R(z)}, \qquad \nu_2(z)=p(z)-q(z) \sqrt{R(z)},
    $$
    with $ R(z)=a^2(z-x_1)(z-x_2)/z$ and the square root is taken so that $\sqrt{R(z)}>0$ for $z>0$. 
    Here $p(z)$ is a polynomial of degree at most  $\lfloor d/2\rfloor$ and $q$ is a polynomial of degree $\lfloor(d-1)/2\rfloor$ with a zero at $z=0$. 

     Observe that $\nu_1'(z)\nu_2(z)$ can be written as 
     \begin{multline}\label{eq:nu1primenu1}
     \nu_1'(z)\nu_2(z)=\left(p'(z)+q'(z) \sqrt{R(z)}+\frac{q(z)R'(z)}{2\sqrt{R(z)}}\right)(p(z)-q(z) \sqrt{R(z)})\\
     = \frac{r_1(z)+r_2(z) \sqrt{R(z)}}{z \sqrt{ R(z)}},
     \end{multline}
     where 
     $$r_1(z)=2zq'(z)p(z)R(z)+zq(z)p(z) R'(z)-2p'(z)q(z) z R(z),$$ 
     and
     $$r_2(z)=2z p'(z)p(z)-2zq'(z)q(z)R(z)+zq(z)^2 R'(z).$$ 
     Since $q(0)=0$ and $R'(z)$ has double pole at $z=0$, $r_1$ and $r_2$ are polynomials and $r_2(0)=0$. The degree of $r_1$ and $r_2$ is at most $d$. 
      By replacing $\sqrt{R(z)}$ by $-\sqrt{R(z)}$ in the derivation above we also find
    $$
    \nu_2'(z)\nu_1(z)=\frac{-r_1(z)+r_2(z) \sqrt{R(z)}}{z \sqrt{ R(z)}}.
    $$
    Therefore, we can write 
    $$
     2r_1(z)=\left(\nu_1'(z)\nu_2(z)-\nu_2'(z)\nu_1(z)\right)z\sqrt{R(z)},
    $$
    and
\begin{equation}\label{eq:relr2}
2r_2(z)=z\left(\nu_1'(z)\nu_2(z)+\nu_2'(z)\nu_1(z)\right)
\end{equation}
Since $\nu_2(z)$ has a zero of order $d$ at $z=a^{-2}$, this means that both $r_1$ and $r_2$ have a zero of order $d-1$ at $z=a^{-2}$. This implies that $\nu_1'(z)\nu_2(z)$ can be written as 
     $$
       \nu_1'(z)\nu_2(z)= d\frac{(z-a^{-2})^{d-1}\left(\gamma_1+\gamma_2z+  \gamma_3 z\sqrt {R(z)}\right)}{z \sqrt {R(z)}}
     $$
     and thus 
    \begin{equation} \label{eq:logder_nu_proof}
     \frac{\nu_1'(z)}{\nu_1(z)}=\frac{\nu_1'(z)\nu_2(z)}{{\nu_1(z)} {\nu_2(z)}}= d\frac{\gamma_1+\gamma_2z+  \gamma_3 z\sqrt {R(z)}}{(z-a^{-2})z \sqrt {R(z)}},
    \end{equation}
    where $\gamma_j\in \mathbb R$, for $j=1,2,3$,  are some real constants.

   By a similar reasoning,  one can show that
        \begin{equation}\label{eq:logder_mu_proof}
            \frac{\mu_1'(z)}{\mu_1(z)}=da^2\frac{z\tilde \gamma_1+\tilde \gamma_2+ \tilde  \gamma_3 \sqrt {R(z)}}{(z-a^2)z \sqrt {R(z)}},
        \end{equation}
    for some real parameters $\tilde \gamma_j$, for $j=1,2,3$.

The next step is to compute the values of the constants $\gamma_j, \tilde \gamma_j$ for $j=1,2,3$. 
   To this end,  add \eqref{eq:logder_nu_proof} and \eqref{eq:logder_mu_proof} to obtain 
   \begin{multline}\label{eq:llpin1}
    d\frac{\lambda_1'(z)}{\lambda_1(z)}=\frac{\mu_1'(z)}{\mu_1(z)}+\frac{\nu_1'(z)}{\nu_1(z)}
 =d\frac{(\gamma_1+ \gamma_2 z)(z-a^2)+a^2(\tilde \gamma_1 z+ \tilde \gamma_2 )(z-a^{-2})}{(z-a^2)(z-a^{-2})z \sqrt{R(z)}}\\
 +d\frac{\left(z\gamma_3(z-a^2)+a^2 \tilde \gamma_3(z-a^{-2}\right))\sqrt{R(z)}}{(z-a^2)(z-a^{-2})z \sqrt{R(z)}}.
   \end{multline}
   On the other hand, we easily compute from \eqref{eq:spectral_curve_introduction}--\eqref{eq:lambda_curve} that
   \begin{multline} \label{eq:llprin2}
    \frac{\lambda_1'(z)}{\lambda_1(z)}= \frac{\lambda_1'(z) \lambda_2(z)}{\lambda_1(z)\lambda_2(z)}=-\frac{z^2 R'(z)\left(\frac12 (1+a^2)(\alpha+1/\alpha)-\frac12 \sqrt{R(z)}\right)}{4 a^2 z(z-a^2)(z-a^{-2}) \sqrt{R(z)}}\\
    =-\frac{(z^2-1)\left( (1+a^2)(\alpha+1/\alpha)-\sqrt{R(z)}\right)}{ 2 z(z-a^2)(z-a^{-2}) \sqrt{R(z)}}
   \end{multline}
   where we used 
   $z^2 R'(z)=4a^2(z^2-1)$ in the last step. Comparing \eqref{eq:llpin1} and \eqref{eq:llprin2} leads to the following two equations:
   \begin{equation}\label{eq:gammacond1}
   (\gamma_1+ \gamma_2 z)(z-a^2)+a^2(\tilde \gamma_1 z+ \tilde \gamma_2 )(z-a^{-2})=-\frac{1}{2}(z^2-1) (1+a^2)(\alpha+1/\alpha),
   \end{equation}
and 
\begin{equation} \label{eq:gammacond2}
z\gamma_3(z-a^2)+a^2 \tilde \gamma_3(z-a^{-2})=\frac12(z^2-1).
\end{equation}
From \eqref{eq:gammacond2} we find 
\begin{equation}\label{eq:g1}
    \gamma_3= \tilde \gamma_3=\frac 12,
\end{equation}
and from \eqref{eq:gammacond1} we find 
\begin{equation}\label{eq:g2}
    \tilde \gamma_1=\gamma_2,\quad \tilde \gamma_2=\gamma_2
\end{equation}
and 
\begin{equation}\label{eq:g3}
    a^2\gamma_1+\gamma_2=- \frac12 (1+a^2)(\alpha+1/\alpha).
\end{equation}
Thus far, we have derived the first two identities in \eqref{eq:constants_gamma}. 
   
   The value of $\gamma_1$ can be computed by comparing the asymptotic expansion for the logarithmic derivative for $\mu_1$   from \eqref{eq:logder_mu_proof} and \eqref{eq:mu_asymptotic}, and  comparing the results. Indeed, from \eqref{eq:mu_asymptotic} we find 
   $$
   \frac{\mu_1'(z)}{\mu_1(z)}= -{\frac{a}{2z^{3/2}}} 
   \left( 
       \sum_{j=0}^{d-1} a(\sigma^j(x,y)) \sum_{k=0}^{d-1} \frac{1}{a(\sigma^k(x,y))}
   \right)^{1/2}+ \mathcal O(1/z^{2}),
   $$
   as $z\to \infty$, and from \eqref{eq:logder_mu_proof} we find, using $\gamma_1=\tilde \gamma_1$ and \eqref{eq:defR}, that 
  $$
   \frac{\mu_1'(z)}{\mu_1(z)}=\frac{da\gamma_1}{ z^{3/2}}+ \mathcal O(1/z^{2}),
   $$
   as $z\to \infty$. Therefore
   \begin{equation}\label{eq:g4}
   \gamma_1=-\frac12 \left( 
       \frac1d \sum_{j=0}^{d-1} a(\sigma^j(x,y))\frac 1d \sum_{k=0}^{d-1} \frac{1}{a(\sigma^k(x,y))}
   \right)^{1/2},
   \end{equation}
   which is the third identity in \eqref{eq:constants_gamma}.

   Finally, by substituting \eqref{eq:logder_nu_proof} and \eqref{eq:logder_mu_proof} using \eqref{eq:g1}, \eqref{eq:g2}, \eqref{eq:g3} and \eqref{eq:g4} into $\Phi'(z)$, and using analytic continuation to $\mathcal R$,  we obtain \eqref{eq:phiingam}. 
\end{proof}
\subsection{Proof of Lemma \ref{lem:relationsgamma}} \label{sec:proofgamma}
\begin{proof}[Proof of Lemma \ref{lem:relationsgamma}]
    The cycle condition \eqref{eq:cyclecondition} implies that (using \eqref{eq:phiingam}) 
    \begin{equation}\label{eq:cyclecondition_real}
    (1-\tau )a^2\int_{x_1}^{x_2} \frac{ x \gamma_1+\gamma_2}{(x-a^{2})x\sqrt{R(x)}} dx
    -\tau \int_{x_1}^{x_2} \frac{ \gamma_1+x\gamma_2}{(x-a^{-2}) x\sqrt{R(x)}} dx=0.
\end{equation}
    By a change of variable $x\mapsto 1/x$ we find (using \eqref{eq:defR})
    $$
    \int_{x_1}^{x_2} a^2
        \frac{x \gamma_1+\gamma_2}{(x-a^{2}) x\sqrt{R(x)}} dx
        =-\int_{x_1}^{x_2} \frac{ \gamma_1+x\gamma_2}{(x-a^{-2})x\sqrt{R(x)}} dx,
    $$
    and after substituting this into the first integral, \eqref{eq:cyclecondition_real} reduces to 
    $$
     \int_{x_1}^{x_2} \frac{ \gamma_1+x\gamma_2}{(x-a^{-2})x\sqrt{R(x)}} dx=0,
     $$
     after which the statement easily follows.
\end{proof}

\appendix

\section{Example:  torsion point of order six} \label{sec:order_six}

Let us now assume
\begin{equation}\label{eq:relationssix}
a^2=\frac{\alpha}{1+\alpha+\alpha^2}.
\end{equation}
Then, as discussed in Section \ref{sec:examplestorsion},  $(a^{-2},a^{-2})$ is a torsion point of order six. Here we will compute the spectral curves for $\mu$ and $\nu$, and  derive a degree eight equation for the boundary of the rough disordered region. We will also show, numerically, how the steepest descent/ascent path can be chosen when $(\tau,\xi)$ are in the center of the diamond. 
\subsection{The flow on the matrices}
 The linear flow on the elliptic curve is given already in \eqref{eq:flowsix}. The flow on the matrices  and their decomposition can then be traced giving:
\begin{multline}
    P^{(0)}_-(z)=\alpha
    \begin{pmatrix}
       1 & a \alpha^2\\
       \frac{a}{z \alpha^2} & 1
    \end{pmatrix}
    \mapsto
    P^{(1)}_-(z)=
    \begin{pmatrix}
       1 & {a \alpha^3}\\
       \frac{a }{z \alpha^3} & 1
    \end{pmatrix}
    \mapsto
    P^{(2)}_-(z)=\frac{1}{\alpha}
    \begin{pmatrix}
       1 & {a \alpha^2}\\
       \frac{a }{z \alpha^2} & 1
    \end{pmatrix}\\
    \mapsto
    P^{(3)}_-(z)=\frac{1}{\alpha}
    \begin{pmatrix}
       1 & a\\
       \frac{a}{z} & 1
    \end{pmatrix}
    \mapsto
    P^{(4)}_-(z)=
    \begin{pmatrix}
       1 & \frac{a}{\alpha}\\
       \frac{a \alpha}{z} & 1
    \end{pmatrix}
    \mapsto
    P^{(5)}_-(z)=\alpha
    \begin{pmatrix}
       1 & a\\
       \frac{a}{z} & 1
    \end{pmatrix}
    \end{multline}
    and 
    \begin{multline}
       P^{(0)}_+(z)=
       \begin{pmatrix}
          1 & az\\
          \frac{a}{\alpha^2 } & \frac{1}{\alpha^2}
       \end{pmatrix}
      \mapsto
       P^{(1)}_+(z)=
       \begin{pmatrix}
          1 & {a  \alpha z }\\
          \frac{a}{\alpha  } & 1
       \end{pmatrix}
      \mapsto
       P^{(2)}_+(z)=
       \begin{pmatrix}
          1 & {a\alpha^2 z}\\
          {a} & \alpha^2
       \end{pmatrix}\\
       \mapsto  P^{(3)}_+(z)=
       \begin{pmatrix}
          1 & {a\alpha^2 z}\\
          {a} & \alpha^2
       \end{pmatrix}
      \mapsto
       P^{(4)}_+(z)=
       \begin{pmatrix}
          1 & a\alpha z\\
          \frac{a}{\alpha} & 1
       \end{pmatrix}
      \mapsto
       P^{(5)}_+(z)=
       \begin{pmatrix}
          1 & az\\
          \frac{a}{\alpha^2 } & \frac{1}{\alpha^2}
       \end{pmatrix}.
    \end{multline}
From here we can compute $P_\pm ^{(d)}$ as in \eqref{eq:pplus} and \eqref{eq:pmin} and the spectral curves \eqref{eq:spectral_curve_nu_intro} and \eqref{eq:spectral_curve_mu_intro}.

The discriminant $R$ as in \eqref{eq:defR} can be written as 
$$
  R(z)=(a^{-2}-1)^2(a^2+1)^2+4a^2(z+1/z-a^2-a^{-2}).
$$
Straightforward computations give $\det P_+^{(d)}(z)= (a^2 z-1)^6$ and
$$
   \Tr P_+^{(d)}(z)=2+\frac{(1-6a^4+3a^8)z}{a^6}+2(2-3a^4)z^2+2a^6 z^3.
$$
The discriminant then becomes
$$
   \Tr P_+^{(d)}(z)-4 \det P_+^{(d)}(z)=a^{-12} z^2(1-3a^4+2a^6z)^2 (1-6a^4-3a^8+4a^6(z+1/z)),
$$
so that 
$$
   \nu(z)=1+\frac{(1-6a^4+3a^8)z}{2a^6}+(2-3a^4)z^2+a^6 z^3\pm \frac{z(1-3a^4+2a^6z)}{2a^4}(R(z))^{1/2}.
$$
Similarly, $\det P_-^{(d)}(z)=(z-a^2)^6/z^6$ and 
$$
\Tr P_-^{(d)}(z)=2 + (2 a^6z^{-3} + 4 z^{-2} - 6 a^4 z^{-2} + ((1 - 6 a^4 + 3 a^8) z^{-1})/a^6).
$$
The discriminant then becomes 
$$
\Tr P_-^{(d)}(z)-4 \det P_-^{(d)}(z)=a^{-12} z^{-2}(1-3a^4+2a^6z^{-1})^2 (1-6a^4-3a^8+4a^6(z+1/z)),
$$
so that
$$
   \mu(z)=1+\frac{(1-6a^4+3a^8)z^{-1}}{2a^6}+(2-3a^4)z^{-2}+a^6 z^3\pm \frac{z^{-1}(1-3a^4+2a^6z^{-1})}{2a^4}(R(z))^{1/2}.
$$
Note that $\mu(z)=\nu(1/z)$.

Now that we have $\mu$ and $\nu$ we can compute the saddle point equation $\Phi'(z)dz=0$. We start with computing the logarithmic derivatives of $\mu$ and $\nu$:
\begin{equation}
    \frac{\mu'(z)}{\mu(z)}=\frac{-2+(-1/a^2+3a^2)z+3 a^2 (R(z))^{1/2}}{(z-a^2)z(R(z))^{1/2}}
\end{equation}
and
\begin{equation}
    \frac{\nu'(z)}{\nu(z)}=\frac{(-1/a^2+3a^2)-2z+3 a^2 z (R(z))^{1/2}}{(a^2z-1)z(R(z))^{1/2}}.
\end{equation}
From the above expressions we can read off the values for $\gamma_j$:
\begin{equation}
\label{eq:valuesgammasix} \gamma_1=-\frac{1}{6a^{4}}+\frac{1}{2}, \quad  \gamma_2=-\frac{1}{3a^2}, \qquad \gamma_3=\frac{1}{2}.
\end{equation}
This, together with \eqref{eq:saddlepointequations2}, allows us to compute the four saddle points as a function of the parameters $a$, $\tau$ and $\xi$. The expressions are rather long, and we omit them here. Instead, we will provide some numerical results in the next subsection.

\subsection{Contours of steepest descent/ascent}

We will plot the contours of steepest descent/ascent for $\Re \Phi$, with $\Phi$ as in \eqref{eq:defPhi}, for the special values 
$$ 
        a^2=\frac 13-\frac{1}{100}, \quad \tau=\frac 12, \qquad \xi=0.
$$
This is the midpoint of the Aztec diamond, where we indeed expect a smooth disordered region to appear.  Indeed, with this choice of parameters, we find four saddle points on the cycle $\mathcal C_1$.  Two of them are on the first sheet:
$$
    s_1=-1.97156, \qquad s_2= -0.833032,
$$
and the other two on the second sheet:
$$
     s_3= -0.507212, \qquad s_4=-1.20043.
$$
The branch points are at 
$$
  x_1=-2.01885, \qquad x_2=-0.495331.
$$
Observe that $s_1$ is close to $x_1$ and $s_3$ is close to $x_2$, which we found to be typical for any choice of parameters. This has the unfortunate consequence that in numerical illustrations the saddle points $s_1$ and $s_3$ are hard to distinguish from the branch points $x_1$ and $x_2$, respectively.

The contours of steepest descent/ascent leaving from the saddle points  locally coincide with the level lines of $\Im \Phi(z)$. The problem is that $ \Phi(z)$ has logarithmic terms making  $\Im \Phi(z)$ multi-valued and plotting the level sets $\Im \Phi(z)=\Im \Phi(s_j)$ does  not give the correct result. For this reason, we compute the vectorfield given by $(\Re \Phi', - \Im \Phi')$ and compute the streamlines using the function  \texttt{Streamplot} in Mathematica. The results are given in Figure \ref{fig:saddlesketch}.  

 From Figure \ref{fig:saddlesketch} one can see that the paths of ascent/descent are indeed  as illustrated schematically in Figure \ref{fig:path_steep_descent_ascent}(g). On the first sheet, the paths of steepest descent leaving from $s_1$ end up at $\infty$ and the paths of steepest ascent leaving from $s_2$ end up at $a^{2}$. On the second sheet, the paths of steepest descent leaving from $s_3$ end up at $0$ and the paths of steepest ascent leaving from $s_4$ end up at $a^{-2}$. The statement for $s_3$ is not immediately obvious from  Figure \ref{fig:saddlesketch}(b) and this is why we zoom in around $s_3$ as in Figure \ref{fig:saddlesketch}(c).

\begin{figure}
    \begin{center}
    \begin{subfigure}{1 \textwidth}
        \centering{
        \begin{overpic}[scale=.8]{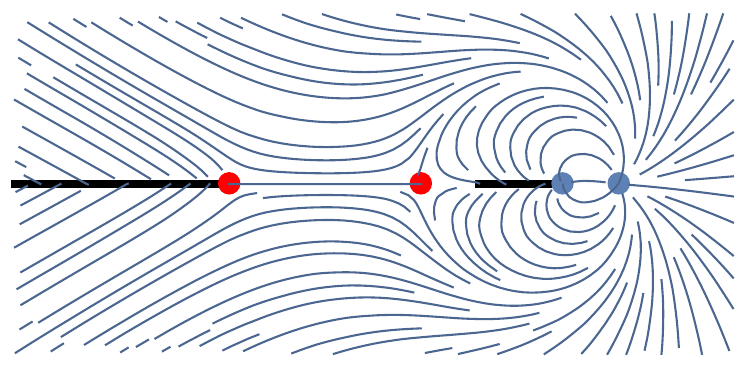}\end{overpic}\\
        \caption{Level lines for $\Im \Phi$. The paths of steepest descent  for $\Re \Phi$ from  $s_1$ connect to infinity on the first sheet. The paths of steepest ascent from $s_2$ end up in $a^2$.}
        \label{}}
    \end{subfigure}
    \vspace*{1cm}

    \begin{subfigure}{.45 \textwidth}
        \centering{
        \begin{overpic}[scale=.5]{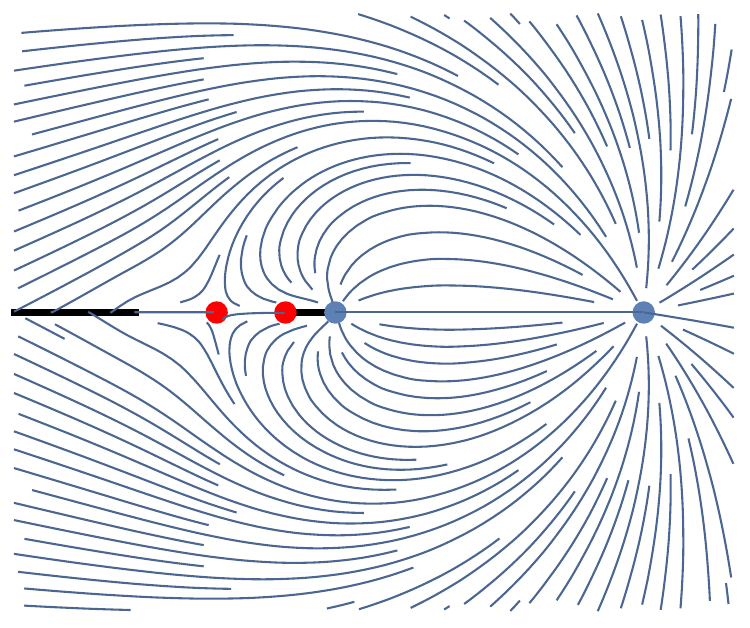} \end{overpic}
        \caption{Level lines for $\Im \Phi$. The paths of steepest ascent from $s_2$ end up in $a^{-2}$. To see the paths of steepest descent from $s_3$ we need to zoom in.}}
    \end{subfigure}\quad 
    \begin{subfigure}{.45 \textwidth}
        \begin{center}
        \begin{overpic}[scale=.6]{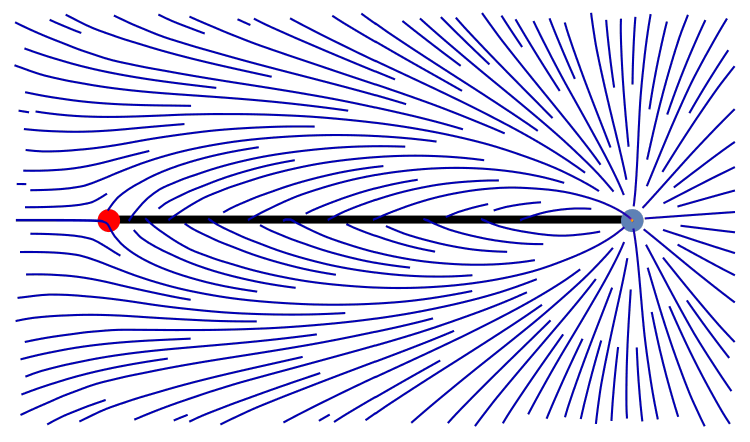} \end{overpic}
        \caption{Zooming in on the segment $(x_2,0)$ shows that the path of steepest descent for $\Re \Phi$ starting from the saddle point $s_3$ end in the origin.}
        \end{center}
    \end{subfigure}
    
    \caption{}
    \label{fig:saddlesketch}
\end{center}
\end{figure}
\subsection{Boundary of the rough disordered region}

The last result for the case of order six that we will present here, is an explicit expression for the boundary of the rough disordered region.  We follow the procedure indicated in Section \ref{sec:boundary} with $a$ and $\alpha$ related by \eqref{eq:relationssix} and values for $\gamma_j$, $j=1,2,3$ as in \eqref{eq:valuesgammasix}.  We start with \eqref{eq:saddlepointequations2}, square both sides and remove the additional factors $(z-a^2)(z-a^{-2})/z$. Then we obtain an equation that is polynomial in $z$ and of order four. The values of $(\tau,\xi)$ where the discriminant vanishes lead to a double saddle point and this will be the boundary of the liquid region. 

The discriminant has degree twelve in $\tau$ and $\xi$, and for general parameters $a$ (and $\alpha$ related by \eqref{eq:relationssix}) the expression is rather long. In order to obtain a shorter expression  it will be convenient to perform the following change of variables
$$
(\tau,\xi)=((q+v+1)/2,q/2).
$$
These coordinates change the parallellogram in the left panel  into the tilted square in the right panel of  Figure \ref{fig:regions}.

The discriminant has two factors.  The first factor, of degree four in $v$ and $q$,  reads
$$
(-1 + 9 a^4 + 9 q^2 - 9 a^4 q^2 - 9 a^4 v^2 + 9 a^8 v^2)^2.
$$
The zero set of this factor is a hyperbola that lies outside the Aztec diamond, and hence this factor does not contribute to the boundary for the rough region.  The second factor, of degree eight in $v$ and $q$, is the factor that defines the boundary for the rough disordered region:
\small{
\begin{multline*}
    0=16 - 336 a^4 + 1440 a^8 + 7776 a^{12} - 34992 a^{16} - 104976 a^{20} - 
 288 q^2 + 6336 a^4 q^2 - 45504 a^8 q^2 + 124416 a^{12} q^2 - \\
 209952 a^{16} q^2 + 419904 a^{20} q^2 + 1296 q^4 - 32400 a^4 q^4 + 
 242352 a^8 q^4 - 587088 a^{12} q^4 + 839808 a^{16} q^4 - \\
 629856 a^{20} q^4 + 23328 a^4 q^6 - 303264 a^8 q^6 + 769824 a^{12} q^6 - 
 909792 a^{16} q^6 + 419904 a^{20} q^6 + 104976 a^8 q^8 - \\
 314928 a^{12} q^8 + 314928 a^{16} q^8 - 104976 a^{20} q^8 - 72 v^2 + 
 1152 a^4 v^2 - 1224 a^8 v^2 - 43200 a^{12} v^2 + 75816 a^{16} v^2 + \\
 419904 a^{20} v^2 - 157464 a^{24} v^2 + 1296 q^2 v^2 - 
 20088 a^4 q^2 v^2 + 119880 a^8 q^2 v^2 - 527472 a^{12} q^2 v^2 + \\
 1283040 a^{16} q^2 v^2 - 997272 a^{20} q^2 v^2 + 472392 a^{24} q^2 v^2 - 
 5832 q^4 v^2 + 81648 a^4 q^4 v^2 - 367416 a^8 q^4 v^2 + \\
 863136 a^{12} q^4 v^2 - 833976 a^{16} q^4 v^2 + 734832 a^{20} q^4 v^2 - 
 472392 a^{24} q^4 v^2 + 52488 a^4 q^6 v^2 - 472392 a^8 q^6 v^2 + \\
 944784 a^{12} q^6 v^2 - 524880 a^{16} q^6 v^2 - 157464 a^{20} q^6 v^2 + 
 157464 a^{24} q^6 v^2 + 81 v^4 - 1215 a^4 v^4 - 3483 a^8 v^4 + \\
 79461 a^{12} v^4 - 2349 a^{16} v^4 - 750141 a^{20} v^4 + 570807 a^{24} v^4 - 59049 a^{28} v^4 - 1458 q^2 v^4 +\\ 21870 a^4 q^2 v^4 - 
 158922 a^8 q^2 v^4 + 867510 a^{12} q^2 v^4 - 1963926 a^{16} q^2 v^4 + 
 1418634 a^{20} q^2 v^4 - 301806 a^{24} q^2 v^4 \\+ 118098 a^{28} q^2 v^4 + 6561 q^4 v^4 - 98415 a^4 q^4 v^4 + 452709 a^8 q^4 v^4 - 
 387099 a^{12} q^4 v^4 - 610173 a^{16} q^4 v^4 \\
 + 964467 a^{20} q^4 v^4 - 
 269001 a^{24} q^4 v^4 - 59049 a^{28} q^4 v^4 + 5832 a^8 v^6 -
 52488 a^{12} v^6 - 128304 a^{16} v^6  \\
 + 734832 a^{20} v^6 - 
 717336 a^{24} v^6 + 157464 a^{28} v^6 + 52488 a^8 q^2 v^6 - 
 472392 a^{12} q^2 v^6 + 944784 a^{16} q^2 v^6 \\- 524880 a^{20} q^2 v^6 - 
 157464 a^{24} q^2 v^6 + 157464 a^{28} q^2 v^6 + 104976 a^{16} v^8 - 
 314928 a^{20} v^8 + 314928 a^{24} v^8 - 104976 a^{28} v^8.
\end{multline*}
}

\normalsize
For $\alpha=1$ (and thus $a^2=1/3$) this can be reduced to 
$$
0=(3 q^2 + v^2)^3 (-3 + 12 q^2 + 4 v^2).
$$
The first factor is only zero for $(q,v)=(0,0)$, and  what is left is the boundary for the smooth disordered region. The second factor is an ellipse. 

 Finally, for $\alpha\downarrow 0$ (and hence $a \downarrow 0$ simultaneously) the curve reduces to 
 $$ 
    0=(1 - 9 q^2)^2 (4 - 9 v^2)^2,
 $$
 which gives a rectangular shape. In this case, there is no rough disordered region, but only a frozen region and a smooth disordered region.
 
 \begin{figure}[t]
    \begin{center}
    \includegraphics[scale=.4,angle=90]{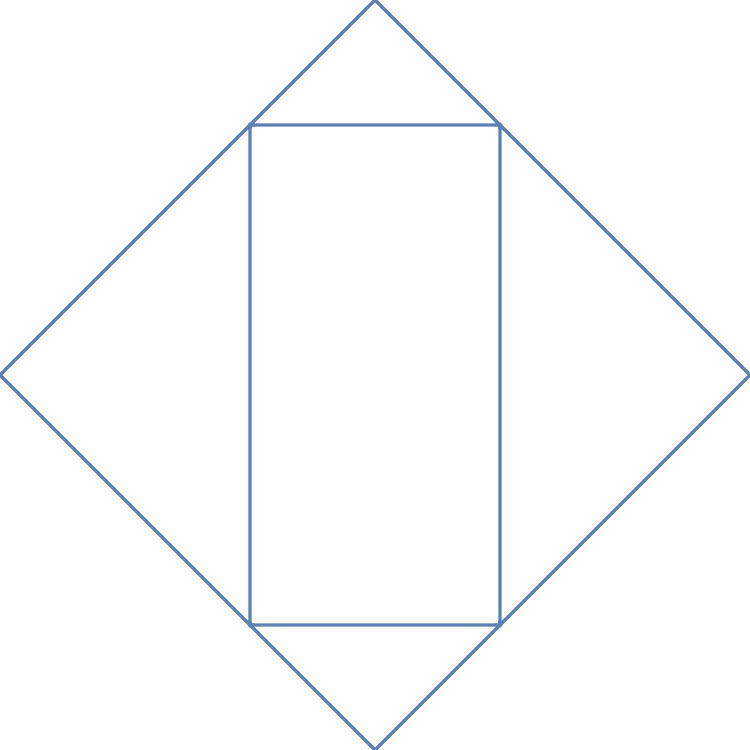} \qquad  \includegraphics[scale=.4,angle=90]{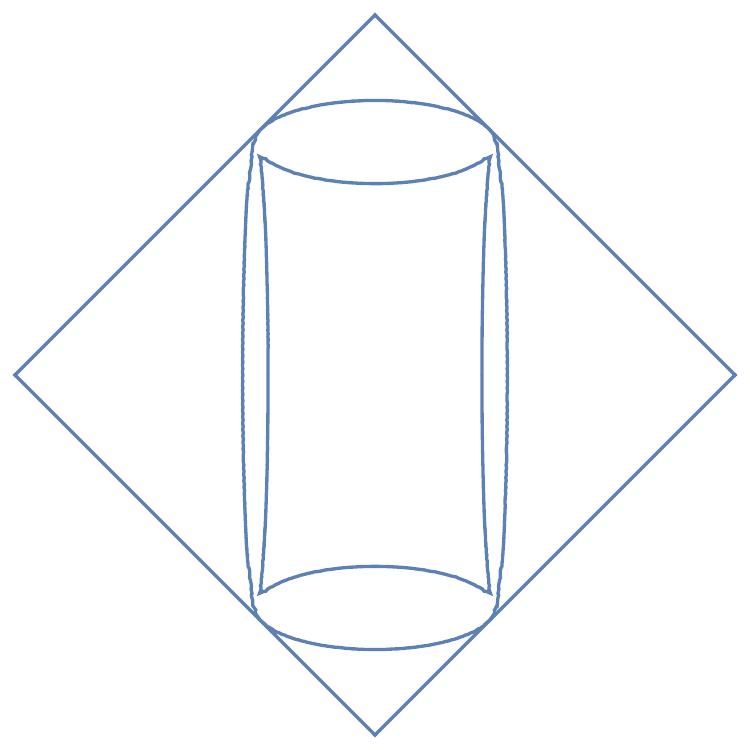}
    \vspace*{1cm}

    \includegraphics[scale=.4,angle=90]{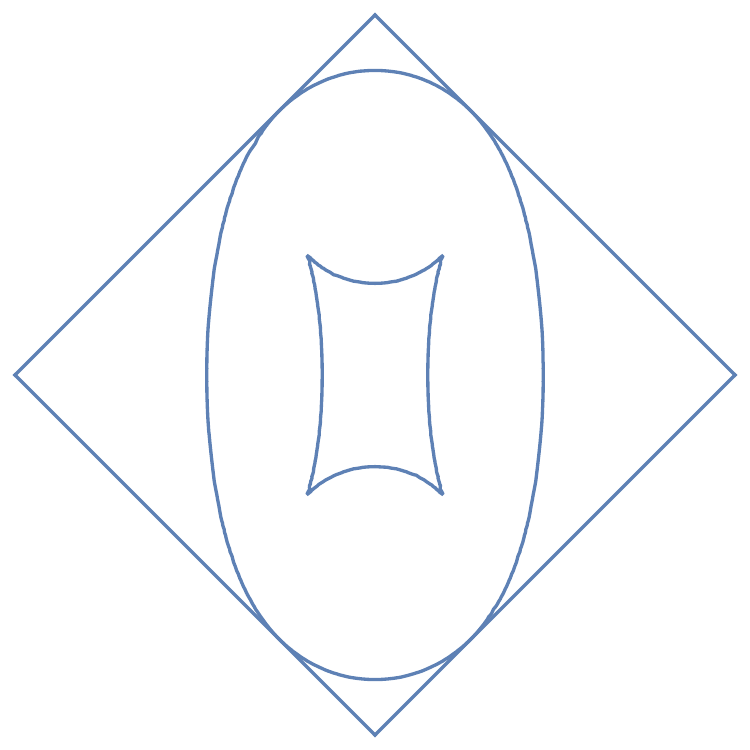} \qquad  \includegraphics[scale=.4,angle=90]{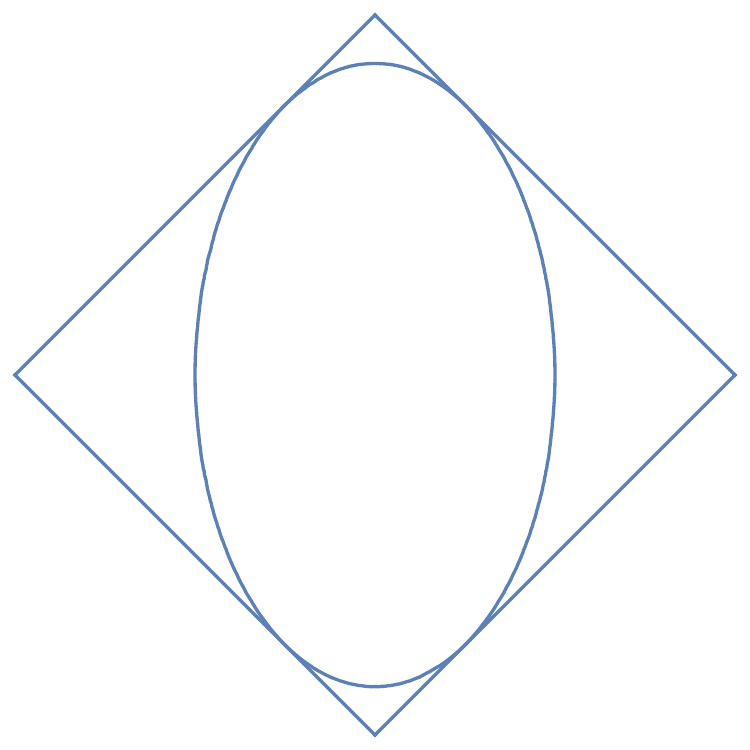}
    \end{center}
    \caption{The boundary of the rough disordered regions  in the $(v,q)$ plane  for the values $a=0$, $a=0.4$, $a=.55$ and $a=\frac 13 \sqrt{3}$. For $a=0$, the rough disordered region has disappeared. For $a=\frac13 \sqrt{3}$ the smooth disordered region has disappeared. }
    \label{fig:examplescurvesix}
 \end{figure}

 In Figure \ref{fig:examplescurvesix} we have plotted the boundary of the rough disordered region for several particular values of $a$. 

 \section{Computation of torsion points} \label{sec:divisionpolynomials}

In Section \ref{sec:examplestorsion} we gave a few examples of particular choices for the parameters $\alpha$ and $a$ such that $(1/a^2,1/a^2)$ is a torsion point. Here we will indicate how one can find such examples by recalling the notion of division polynomials. This is a standard construction for finding the torsion subgroups of the elliptic curve. We will base our discussion on \cite[p. 105]{Silv}.

First, let us introduce new variables
$$ Y=\tfrac y 2 {(a+1/a)(\alpha+1/\alpha)}, \qquad X=x,
$$
 and rewrite \eqref{eq:elliptic_aztec} as
$$
Y^2=X^3+\left((a+1/a)(\alpha+1/\alpha)^2/2-a^2-1/a^2\right)X^2+X.
$$
In the new variables, we ask for what choices of $\alpha$ and $a$ we have that $(1/a^2,\frac{(a+1/a)(\alpha+1/\alpha)}{2a^2})$ is a torsion point. With the same notation as in  \cite[p. 105]{Silv} we define
$$
\begin{cases}
    a_2=-a^2-1/a^2+\frac14 (a+1/a)^2(\alpha+1/\alpha)^2,\\
    a_4=1,\\
    b_2=4a_2,\\
    b_4=2,\\
    b_8=-1,
\end{cases}
$$
and the remaining parameters $a_1=a_3=a_6=b_6=0$. Then we define 
$$
\begin{cases}
    \psi_1=1,\\
    \psi_2=2Y,\\
    \psi_3=3X^4+b_2X^3+3b_4X^2+b_8,\\
    \psi_4=\psi_2\left(2X^6+b_2 X^5 +5 b_4 X^4+10b_8X^2+b_2b_8 X+b_4 b_8\right),
\end{cases}
$$
and $\psi_k$ with $k \geq 5$ recursively using:
$$
\begin{cases}
    \psi_{2m+1}= \psi_{m+2}\psi_m^3-\psi_{m-1}\psi_{m+1}^3,& m\geq 2,\\
    \psi_2\psi_{2m} =\psi_{m-1}^2 \psi_m \psi_{m+2}-\psi_{m-1}\psi_{m} \psi_{m+1}^2, & m \geq 3.
\end{cases}
$$ 
The torsion subgroup of order $m$ consists of all zeros of $\psi_m$, which are called division polynomials. 

Note that we are not looking for the entire subgroup, but for situations where 
$$
    \left(1/a^2,\frac{(a+1/a)(\alpha+1/\alpha)}{2a^2}\right)
$$
is a torsion point. After substituting this point into  $\psi_m$ we find a rational function in $a$ and $\alpha$. This rational function will have several zeros, but we are only interested in the zeros that satisfy $0<\alpha<1$ and $0<a \leq 1$. For instance, for $m=3$ we find the following equation 
$$
-\frac{(1+a^2)^2 (-1-\alpha+a^2 \alpha-\alpha^2) (1-\alpha+a^2 \alpha+\alpha^2)}{a^8 \alpha^2}=0.
$$
This equation has no solutions such that  $0<\alpha<1$ and $0<a \leq 1$, and thus a third order torsion point cannot occur.

With the help of a computer code, we found the following equations that give proper solutions such that we have a torsion point of order $m=4, \ldots,8$:
\begin{eqnarray*}
    m=4:& a=1,\\
    m=5:& 0=-a^4 + \alpha - a^2 \alpha + \alpha^2 - 2 a^2 \alpha^2 - 
    2 a^4 \alpha^2 + \alpha^3 + a^2 \alpha^3 - 3 a^4 \alpha^3 \\ & + 
    a^6 \alpha^3 + \alpha^4  2 a^2 \alpha^4 - 
    2 a^4 \alpha^4 + \alpha^5 - a^2 \alpha^5 - a^4 \alpha,\\
    m=6:& 0=(1+\alpha+\alpha^2) a^2-\alpha\\
    m=7:& 0=a^4 + a^4 \alpha - a^6 \alpha + a^8 \alpha - a^{10} \alpha + 
    5 a^4 \alpha^2 + 2 a^6 \alpha^2 - a^8 \alpha^2 - \alpha^3\\ &  + 
    3 a^2 \alpha^3 - a^4 \alpha^3 + 5 a^6 \alpha^3 - 
    4 a^8 \alpha^3 - 2 a^{10} \alpha^3 - \alpha^4 + \\ & 
    4 a^2 \alpha^4 + 5 a^4 \alpha^4 + 12 a^6 \alpha^4 - 
    5 a^8 \alpha^4 - \alpha^5 - a^2 \alpha^5 + 12 a^4 \alpha^5\\ & - 
    7 a^8 \alpha^5 - 3 a^{10} \alpha^5 - \alpha^6 + 
    2 a^2 \alpha^6 + 17 a^4 \alpha^6 + 7 a^8 \alpha^6 \\ &  - 
    6 a^{10} \alpha^6 + a^{12} \alpha^6 - \alpha^7 - a^2 \alpha^7 + 
    12 a^4 \alpha^7 - 7 a^8 \alpha^7 - 
    3 a^{10} \alpha^7 - \alpha^8 + 4 a^2 \alpha^8 \\ & + 
    5 a^4 \alpha^8 + 12 a^6 \alpha^8 - 
    5 a^8 \alpha^8 - \alpha^9 + 3 a^2 \alpha^9 - a^4 \alpha^9 + 
    5 a^6 \alpha^9 - 4 a^8 \alpha^9 - 2 a^{10} \alpha^9 \\ &+ 
    5 a^4 \alpha^{10} + 2 a^6 \alpha^{10} - a^8 \alpha^{10} + 
    a^4 \alpha^{11} - a^6 \alpha^{11} + a^8 \alpha^{11} - 
    a^{10} \alpha^{11} + a^4 \alpha^{12},\\
    m=8 :& 0=a - \alpha + a^2 \alpha + a \alpha^2.
\end{eqnarray*}
Each term on the right-hand side is a factor of $\psi_m$.
\subsection*{Acknowledgements}

The authors are grateful to B. Poonen for directing them to \cite{Silv}.  Figure \ref{fig:sample} was plotted using a code that was kindly provided to us by S. Chhita. We thank T. Berggren and M. Bertola for helpful discussions.

A. Borodin was partially supported by the NSF grant DMS-1853981, and the Simons Investigator program. M. Duits was partially supported by the Swedish Research Council (VR),   grant no 2016-05450 and grant no. 2021-06015, and the European Research Council (ERC), Grant Agreement No. 101002013. 

\subsection*{Data availablity statement}
Data sharing not applicable to this article as no datasets were generated or analysed during the current study.

\end{document}